\theoremstyle{plain}
\newtheorem{thm}{Theorem}[section]
\newtheorem{lem}[thm]{Lemma}
\newtheorem{prop}[thm]{Proposition}
\newtheorem{cor}[thm]{Corollary}
\newtheorem{conj}[thm]{Conjecture}
\newtheorem{claim}[thm]{Claim}
\newtheorem{q}[thm]{Question}
\newtheorem{notation}[thm]{Notation}
\theoremstyle{definition}
\newtheorem{definition}{Definition}
\newtheorem{example}{Example}
\theoremstyle{rem}
\newtheorem{rem}{Remark}
\newcommand{\fp }{{\mathfrak{p}}}
\newcommand{\fn }{{\mathfrak{n}}}
\newcommand{\bF }{{\bf F}}
\newcommand{\Ga}{{\bf G}_a}
\newcommand{\A}{{\bf A}}
\newcommand{\Spec }{\mathop{\rm Spec}\nolimits}
\newcommand{\nil }{\mathop{\rm nil}\nolimits}
\newcommand{\pl }{\mathop{\rm pl}\nolimits}
\newcommand{\trd }{\mathop{\rm tr.deg}\nolimits}
\newcommand{\lt }{\mathop{\rm lt}\nolimits}
\newcommand{\zs}{\{ 0\} }
\newcommand{\sm}{\setminus}
\newcommand{\Aut }{\mathop{\rm Aut}\nolimits}
\newcommand{\Aff }{\mathop{\rm Aff}\nolimits}
\newcommand{\T }{\mathop{\rm T}\nolimits}
\newcommand{\El }{\mathop{\rm El}\nolimits}
\newcommand{\Ex }{\mathcal{E}}
\newcommand{\Ch }{\mathcal{C}}
\newcommand{\ch }{\mathop{\mathrm{char}}\nolimits}
\newcommand{\id}{{\rm id}}
\newcommand{\nd}{\noindent}
\newcommand{\ol}{\overline}
\newcommand{\Z}{{\bf Z}}
\newcommand{\ep}{\epsilon}
\newcommand{\x}{{\boldsymbol x}}
\newcommand{\y}{{\boldsymbol y}}
\newcommand{\z}{{\boldsymbol z}}
\newcommand{\bp}{{\boldsymbol p}}
\newcommand{\kx}{k[\x]}
\newcommand{\kxr}{k(\x)}
\newcommand{\ky}{k[\y ]}
\newcommand{\Ry}{R[\y]}
\newcommand{\Sy}{S[\y]}
\begin{document}

\title[Polynomial automorphisms of characteristic order]
{Polynomial automorphisms of characteristic order and their invariant rings}

\author{Shigeru Kuroda}
\address{Department of Mathematical Sciences\\
Tokyo Metropolitan University\\
1-1 Minami-Osawa, Hachioji, Tokyo, 192-0397, Japan}
\email{kuroda@tmu.ac.jp}
\thanks{This work is partly supported by JSPS KAKENHI
Grant Number 18K03219}

\subjclass[2020]{Primary 13A50, Secondary 14R10, 14R20}

\maketitle

\begin{abstract}

Let $k$ be a field of characteristic $p>0$. 
We discuss the automorphisms of the polynomial ring 
$k[x_1,\ldots ,x_n]$ of order $p$, 
or equivalently the $\Z /p\Z $-actions on the affine space $\A _k^n$. 
When $n=2$, 
such an automorphism is know to be a conjugate of an automorphism 
fixing a variable. 
It is an open question whether the same holds when $n\ge 3$.

In this paper, 
(1) we give the first counterexample to this question when $n=3$. 
In fact, 
we show that every $\Ga $-action on $\A _k^3$ of rank three 
yields counterexamples for $n=3$. 
We give a family of counterexamples by constructing 
a family of 
rank three $\Ga $-actions on $\A _k^3$. 
(2) For the automorphisms induced by 
this family of $\Ga $-actions, 
we show that 
the invariant ring is isomorphic to $k[x_1,x_2,x_3]$ 
if and only if the plinth ideal is principal, 
under some mild assumptions. 
(3) We study the Nagata type automorphisms of $R[x_1,x_2]$, 
where $R$ is a UFD of characteristic $p>0$. 
This type of automorphisms are of order $p$. 
We give a necessary and sufficient condition 
for the invariant ring to be isomorphic to $R[x_1,x_2]$. 
This condition is equivalent to the condition that 
the plinth ideal is principal. 

\end{abstract}

\section{Introduction}\label{sect:intro}
\setcounter{equation}{0}

Let $k$ be a field of characteristic $p\ge 0$, 
$\kx =k[x_1,\ldots ,x_n]$ 
the polynomial ring in $n$ variables over $k$, 
and $\Aut _k\kx $ the automorphism group 
of the $k$-algebra $\kx $. 
For $\phi \in \Aut _k\kx $, 
we consider 
the invariant ring $\kx ^{\phi }:=\{ f\in \kx \mid \phi (f)=f\} $. 
We say that $\phi \in \Aut _k\kx $ is 

\nd 
$\bullet $ 
{\it affine} if $\deg \phi (x_i)=1$ 
for $i=1,\ldots ,n$; 

\nd 
$\bullet $ 
{\it elementary} if $x_1,\ldots ,x_{n-1}\in \kx ^{\phi }$ 
and $\phi (x_n)\in x_n+k[x_1,\ldots ,x_{n-1}]$; 

\nd 
$\bullet $ 
{\it exponential} if $\phi $ 
is induced by a $\Ga $-action on the affine space $\A _k^n$ 
(cf.~\S \ref{sect:Ga-action});

\nd 
$\bullet $ 
{\it of characteristic-order} 
if $\langle \phi \rangle \simeq \Z /p\Z $ 
or $\phi =\id $.

Let $\Aff _n(k)$ 
(resp.\ $\El _n(k)$, $\Ex _n(k)$, and $\Ch _n(k)$) 
be the set of $\phi \in \Aut _k\kx $ 
which is affine (resp.\ elementary, exponential, 
and of characteristic-order). 
Then, we have 
$$
\T _n(k):=\langle \Aff _n(k)\cup \El _n(k)\rangle 
\subset \langle \Aff _n(k)\cup \Ex _n(k)\rangle 
\subset \langle \Aff _n(k)\cup \Ch _n(k)\rangle  
\subset \Aut _k\kx , 
$$
since $\El _n(k)\subset \Ex _n(k)\subset \Ch _n(k)$ 
(cf.~\S \ref{sect:Ga-action}). 
We call $\T _n(k)$ the {\it tame subgroup}. 
The {\it Tame Generators Problem} asks whether 
$\T _n(k)=\Aut _k\kx $. 
This is clear if $n=1$. 
Jung~\cite{Jung} and van der Kulk~\cite{Kulk} 
showed that $\T _2(k)=\Aut _kk[x_1,x_2]$. 
In 2004, 
Shestakov-Umirbaev~\cite{SU} 
showed that the automorphism of Nagata~\cite{Nagata} 
does not belong to $\T _3(k)$ if $p=0$, 
and solved the problem in the negative 
when $n=3$ and $p=0$. 
At present, 
the problem is open when $n=3$ and $p>0$, and when $n\ge 4$.

It is well known that Nagata's automorphism is exponential. 
Hence, 
$\T_3(k)$ is a proper subgroup of 
$\langle \Aff _3(k)\cup \Ex _3(k)\rangle $ if $p=0$. 
The {\it Exponential Generators Conjecture} 
asserts that 
$\Aut _k\kx =\langle \Aff _n(k)\cup \Ex _n(k)\rangle $ 
(cf.~\cite[\S 2.1]{Essen}), 
which is open for all $n\ge 3$. 
If $p=0$, 
then we have 
$\Aut _k\kx =\langle \Aff _n(k)\cup \Ch _n(k)\rangle $, 
since $\Ch _n(k)$ contains $\phi \in \Aut _k\kx $ 
whenever the Jacobian of $\phi$ is not a root of unity. 
It is not known if the same holds when $p>0$ and $n\ge 3$.

To study $\Aut _k\kx $ when $p>0$, 
we consider 
$\Ch _n(k)$ to be important. 
The automorphisms of order $p$ 
are equivalent to 
the $\Z /p\Z $-actions, 
and there are many researches on this and related subjects. 
For example, 
Miyanishi~\cite{M1} investigated $\Z /p\Z $-actions 
on a normal affine domain of characteristic $p$ 
from a view point of Artin-Schreier coverings 
(see also Takeda~\cite{Takeda}). 
Miyanishi-Ito~\cite{MI} contains more background in this direction. 
Tanimoto~\cite{Tani} studied $\Z /p\Z $-actions on $\A _k^n$ 
from an interest of Modular Invariant Theory. 
He classified the triangular $\Z /p\Z $-actions on $\A _k^3$ 
and showed that their invariant rings are generated by at most four elements. 
We also mention that Maubach~\cite{Maubach} showed that 
the invariant rings for a certain class of 
$\Z /p^n\Z $-actions on $\A _k^n$ 
are isomorphic to $\kx $. 
In general, 
for an action of 
a finite group $G$ with $p\nmid |G|$ on $\A _k^n$, 
it is difficult to describe the structure of the invariant ring, 
even for a linear action (cf.~\cite{MIT}). 
It should also be stressed that, 
to study properties of $\phi \in \Aut _k\kx $, 
the information about $\kx ^{\phi }$ is of great use.

Recently, 
some researchers remarked that, 
if $p>0$, 
then $\phi \in \Ch _2(k)$ 
is always a conjugate of an elementary automorphism 
(cf.~Theorem~\ref{thm:Osaka}, \cite{M1}, \cite{Maubach}). 
Hence, 
there exists $\sigma \in \Aut _kk[x_1,x_2]$ 
such that $\sigma (x_1)\in k[x_1,x_2]^{\phi }$. 
Then, the following question naturally arises. 
Here, 
for each $k$-subalgebra $A$ of $\kx $, 
we define 
$$
\gamma (A):=\max \{ N\mid 
\exists \sigma \in \Aut _k\kx \text{ such that\ }
\sigma (k[x_1,\ldots ,x_N])\subset A\} . 
$$

\begin{q}\label{question}\rm
Does $\gamma (\kx ^{\phi })\ge 1$ hold for all $\phi \in \Ch _n(k)$ 
when $p>0$ and $n\ge 3$? 
\end{q}

We have three main contributions in this paper.

\nd 1) 
We give the first 
counterexample to Question~\ref{question} for $n=3$. 
To explain the result, 
we recall some known results about $\Ga $-actions on $\A _k^n$. 
The {\it rank} 
of a $\Ga $-action on $\A _k^n$ is defined to be $n-\gamma (\kx ^{\Ga })$ 
(cf.~\cite{Frank}). 
Then, every nontrivial $\Ga $-action on $\A _k^2$ is of rank one 
if $p=0$ by Rentschler~\cite{Rentschler}, 
and if $p>0$ by Miyanishi~\cite{{MiyanishiNagoya}}. 
When $p=0$, 
Freudenburg~\cite{Frank} 
gave the first example of a $\Ga $-action on $\A _k^n$ of rank $n$ 
for each $n\ge 3$. 
When $p>0$, 
our result says that 
every rank three $\Ga $-action on $\A _k^3$ 
yields a family of $\phi \in \Ex _3(k)$ with $\gamma (\kx ^{\phi })=0$ 
(Theorem~\ref{thm:main}). 
Here, 
we emphasize that $\gamma (\kx ^{\Ga })=0$ 
does not immediately imply $\gamma (\kx ^{\phi })=0$ 
for an induced $\phi \in \Ex _3(k)$, 
because $\kx ^{\Ga }\subsetneq \kx ^{\phi }$. 
We construct a family of rank three 
$\Ga $-actions on $\A _k^3$ when $p>0$, 
and give counterexamples to Question~\ref{question}. 
See (\ref{eq:simple example}) for simple concrete examples.

\nd 2) 
The plinth ideal $\pl (\phi )$ for an automorphism $\phi $ 
(cf.~(\ref{eq:def pl phi})) is an analogue of 
the plinth ideal for a derivation (cf.~\cite[\S 1.1]{Plinth}), 
which carries useful information about $\phi $. 
For $\phi \in \Ex _3(k)$ 
induced by the rank three $\Ga $-action on $\A _k^3$ 
stated above, 
we show that $\pl (\phi )$ is principal 
if and only if $\kx ^{\phi }$ is isomorphic to $\kx $ 
under some mild assumptions 
(Theorems~\ref{thm:plinth rank 3}, 
\ref{thm:rank3 invariant ring} 
and \ref{thm:rank3 invariant ring2}). 
This result is of interest in its own right, 
because a $\Ga $-action on $\A _k^n$ of rank $n$ 
is in general difficult and mysterious.

\nd 3) 
Let $R$ be a domain, 
$0\ne a\in R$, $0\ne \theta (x_2)\in x_2R[x_2]$ 
and $0\ne F\in R[ax_1+\theta (x_2)]$. 
Then, 
there exists 
$\psi \in \Aut _RR[x_1,x_2]$ 
such that 
$\psi (x_1)=x_1+a^{-1}(\theta  (x_2)-\theta (x_2+aF))$ 
and $\psi (x_2)=x_2+aF$ 
(cf.~\S \ref{sect:Nagata construction}), 
and is called the {\it Nagata type automorphism}. 
It is known that 
$\psi $ is exponential. 
Hence, 
$\psi ^p=\id $ holds if $\ch R=p>0$. 
Nagata's automorphism is equal to $\psi $ with 
$(R,a,\theta  (x_2),F)=(k[x_3],x_3,x_2^2,x_1x_3+x_2^2)$.

With this notation, 
we have the following result.

\begin{thm}\label{thm:Nagata Main}
Assume that $R$ is a UFD with $\ch R>0$, 
and let $\psi $ be as above. 

\nd{\rm (i)} 
The invariant ring $R[x_1,x_2]^{\psi }$ is generated by 
at most three elements over $R$. 

\nd{\rm (ii)} The following are equivalent: 

{\rm (a)} The ideal $I:=(a,d\theta (x_2)/dx_2)$ 
of $R[x_1,x_2]$ is principal. 

{\rm (b)} The plinth ideal 
$\pl (\psi )$ is a principal ideal of $R[x_1,x_2]^{\psi }$. 

{\rm (c)} $R[x_1,x_2]^{\psi }$ is isomorphic to $R[x_1,x_2]$ 
as an $R$-algebra. 

\nd{\rm (iii)} If $R=k[x_3,\ldots ,x_n]$, 
then {\rm (a)}, {\rm (b)} and {\rm (c)} 
in {\rm (ii)} are equivalent to the following: 

{\rm (d)} $R[x_1,x_2]^{\psi }$ is isomorphic to $R[x_1,x_2]$ 
as a $k$-algebra. 
\end{thm}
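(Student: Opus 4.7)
The starting point is the invariant $u := a x_1 + \theta(x_2)$, which satisfies $\psi(u) = u$ by direct computation. After inverting $a$, the change of variables $R[x_1,x_2][1/a] = R[1/a][u, x_2]$ converts $\psi$ into the action $u \mapsto u$, $x_2 \mapsto x_2 + aF$, with $aF \in R[u]$ fixed. A standard Artin--Schreier argument then yields
$$
R[x_1,x_2][1/a]^\psi = R[1/a][u, v], \qquad v := x_2^p - (aF)^{p-1}x_2,
$$
so $u, v \in B := R[x_1, x_2]^\psi$ and $R[u, v] \subseteq B \subseteq R[1/a][u, v]$; in particular $B / R[u, v]$ is $a$-torsion.

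For Part~(i), I would reduce modulo $a$: the induced automorphism $\bar\psi$ on $\bar A := (R/a)[x_1, x_2]$ is triangular, sending $x_1 \mapsto x_1 - F\theta'(x_2)$ and fixing $x_2$. By Artin--Schreier its invariant ring is $(R/a)[x_2, \bar w]$ with $\bar w := x_1^p - (F \theta')^{p-1} x_1$. Lifting $\bar w$ to an element $w \in B$ (using the Hasse expansion of $\theta$ to clear $a$-denominators), one expects $B = R[u, v, w]$ by an $a$-adic filtration argument: each element of $B$ is reduced modulo a power of $a$ to a polynomial in $u, v, w$, and the difference is divisible by a higher power of $a$.

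For Part~(ii), assume $I = (d)$ is principal; since $a \in R$, necessarily $d, \alpha := a/d \in R$ and $\beta := \theta'/d \in R[x_2]$, with $(\alpha, \beta) = 1$ in $R[x_2]$. The divisibility $d \mid \theta'$ allows one to solve the cohomological equation
$$
d \cdot \bigl(s(x_2 + aF) - s(x_2)\bigr) = \theta(x_2 + aF) - \theta(x_2)
$$
for some $s \in R[x_2]$ (its existence reduces, via the Hasse expansion $\theta(x_2 + aF) - \theta(x_2) = \sum_{k \ge 1}(aF)^k \theta^{[k]}(x_2)$, to $d \mid \theta'$). The element $w := \alpha x_1 + s$ then lies in $B$, and one verifies that $B$ is an $R$-polynomial ring in two variables generated essentially by $v$ and $w$: when $\alpha$ is a unit in $R$ this follows at once from the Artin--Schreier presentation $A = R[v, w, x_2]/(x_2^p - (aF)^{p-1} x_2 - v)$, while the case $\alpha$ non-unit requires a more delicate analysis of the $R[v, w]$-module structure of $B$; either way this gives (c). A parallel computation produces a principal generator of $\pl(\psi)$ (built from $F$ and $d$), giving (b). For the converses (b) $\Rightarrow$ (a) and (c) $\Rightarrow$ (a), I would characterize $\ker \Delta^2 \cap R[x_1,x_2]$ (where $\Delta := \psi - \id$) as the set of $g_0(u) + g_1(u) x_2$ in the $R[1/a][u, x_2]$ presentation satisfying certain $a$-adic divisibility conditions encoded in the Hasse expansion of $\theta$; these conditions translate into the principality of $I = (a, \theta')$.

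For Part~(iii), (c) $\Rightarrow$ (d) is immediate. For (d) $\Rightarrow$ (c), a cancellation-type argument is needed: if $B \cong k[x_1, \ldots, x_n]$ as $k$-algebras with $R = k[x_3, \ldots, x_n] \subset B$ and $\trd_R B = 2$, one extracts an $R$-polynomial ring structure on $B$ (using, e.g., that $R$ is a polynomial ``coordinate'' subring). The main obstacle is the precise computation of $\pl(\psi)$ needed in Part~(ii): matching the principal generator of $\pl(\psi)$ to a generator of $I$ requires careful control over the $a$-adic orders in the Hasse expansion of $\theta$, and handling the non-unit $\alpha$ case for the polynomial-ring structure of $B$ in Part~(ii) requires extra care.
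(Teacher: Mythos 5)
Your setup is the right one --- your $u$ and $v$ are the paper's $f=ax_1+\theta(x_2)$ and $q=x_2^p-(aF)^{p-1}x_2$, and the inclusion $R[u,v]\subset R[x_1,x_2]^{\psi}\subset R[a^{-1}][u,v]$ is exactly the paper's starting point --- but each of the three parts has a genuine gap at precisely the point where the real work lies. For (i), the whole difficulty is the construction of the third generator, and ``lift the mod-$a$ Artin--Schreier invariant $\bar w$'' does not do it: the reduction map $R[x_1,x_2]^{\psi}\to\bigl((R/aR)[x_1,x_2]\bigr)^{\bar\psi}$ need not be surjective, so there is no a priori reason a lift of $\bar w$ exists in the invariant ring. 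The paper first replaces $a$ by $b:=a/\gcd(a,\theta')$ via the invariant $q_1:=d^{-1}(f-\theta^*(q))$, and then the construction of the third generator $q_2$ requires a nontrivial conductor-type theorem, namely $(\rho')^pS[y]\subset S[y^p,\rho]$ for an arbitrary (non-monic!) $\rho\in S[y]$ in characteristic $p$; this supplies the polynomial $\lambda$ with $\rho'(y)^py=\lambda(y^p,\rho(y))$ that is subtracted off to make $b^{-1}\bigl(\widetilde q_1-\lambda(q,q_1)(dF)^{p-1}\bigr)$ land in $R[x_1,x_2]$. Nothing in your Hasse-expansion remark produces this ingredient. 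Relatedly, in (ii) your proposed invariant $w=\alpha x_1+s(x_2)$ with $s\in R[x_2]$ generally does not exist: the actual invariant $q_1$ lies in $bx_1+\rho(x_2)+d^{p-2}(bF)^{p-1}x_2R[f,aF,x_2]$, and the correction term involves $x_1$ through $F$.

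The two hardest implications are the ones your proposal leaves essentially untouched. First, (a)$\Rightarrow$(c): even when $I$ is principal, the invariant ring is \emph{not} $R[v,w]$; it is $R[q,q_1,q_2]\simeq R[y_0,y_1,y_2]/(\Lambda)$ with $\Lambda=b^py_2+\rho^p(y_0)-y_1^p+b^{p-1}\lambda\nu$, and one must invoke the Russell--Sathaye theorem on residual coordinates to conclude that $\Lambda$ is a coordinate (the residual conditions on the coefficients of $\Lambda$ are exactly where principality of $I$ enters, via condition E of Remark~\ref{rem:principality of I}). Second, (c)$\Rightarrow$(a) and (d)$\Rightarrow$(a): your $\ker\Delta^2$ analysis could at best address the plinth ideal, i.e.\ (b)$\Leftrightarrow$(a); it says nothing about the abstract isomorphism type of the invariant ring. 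The paper proves the contrapositives by exhibiting, when $I$ is not principal, a singular point of the hypersurface $\Lambda=0$ over an algebraic closure of a suitable residue field (Lemma~\ref{lem:non-sing}), contradicting smoothness of affine space. For (iii) in particular, the cancellation-type argument you propose is the wrong tool: deducing an $R$-isomorphism $B\simeq R[x_1,x_2]$ from a $k$-isomorphism $B\simeq k[x_1,\dots,x_n]$ with $R=k[x_3,\dots,x_n]\subset B$ is exactly the kind of statement that fails in positive characteristic (Asanuma--Gupta phenomena), and the paper instead runs the same singularity argument directly over $\ol k$, now also differentiating with respect to the variables of $R$. Finally, your (a)$\Rightarrow$(b) and the converse (b)$\Rightarrow$(a) also need the specific machinery of Lemmas~\ref{lem:gy-h} and~\ref{lem:Nagata pl key} (producing, for each prime power $b_i^{e_i}\,\|\,b$, an invariant $f_i\notin b_iR[x_1,x_2]$ with $b_i^{-e_i}aFf_i\in\pl(\psi)$) together with the gcd lemma for principal plinth ideals in a UFD; these are not routine ``$a$-adic divisibility conditions.''
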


This paper is organized as follows. 
In Section~\ref{sect:prelim}, 
we recall basic notions and results used in this paper. 
In Section~\ref{sect:key}, 
we discuss how to derive a counterexample to 
Question~\ref{question} from 
a rank three $\Ga $-action on $\A _k^3$. 
In Sections~\ref{sect:rank3 family} and \ref{sect:invariant ring}, 
we construct a family of rank three $\Ga $-actions on $\A _k^3$, 
and study their exponential automorphisms. 
Section~\ref{sect:Nagata} is devoted to the study of 
the Nagata type automorphisms. 
In Section~\ref{sect:remark}, 
we list some questions and conjectures.

\section{Preliminary}\label{sect:prelim}
\setcounter{equation}{0}

Throughout this paper, 
all rings and algebras are commutative, 
and $k$ denotes a field. 
If $B\subset B'$ are domains, 
$\trd _BB'$ denotes the transcendence degree of $B'$ over $B$, 
and $Q(B)$ denotes the quotient field of $B$. 
For rings $B\subset B'$ 
and a ring homomorphism $\phi :B\to B'$, 
we define 
\begin{equation}\label{eq:B^phi def}
B^{\phi }:=\{ b\in B\mid \phi (b)=b\} .
\end{equation}

\subsection{$\Ga $-action and exponential automorphisms}\label{sect:Ga-action}

Let $R$ be a ring, $B$ an $R$-algebra, 
and $T$ and $U$ indeterminates. 
Recall that a homomorphism $\ep :B\to B[T]$ of $R$-algebras 
defines an action of the additive group 
$\Ga :=\Spec R[T]$ on $\Spec B$ 
if and only if the following conditions 
hold for each $a\in B$. 
Here, we write $\ep (a)=\sum _{i\ge 0}a_iT^i$, 
where $a_i\in B$. 

\smallskip 

(A1) $a_0=a$. \qquad 
(A2) $\sum_{i\ge 0}\ep (a_i)U^i
=\sum _{i\ge 0}a_i(T+U)^i$ in $B[T,U]$. 

\smallskip 

\nd 
If this is the case, 
the $\Ga $-invariant ring is 
$B^{\ep }$. 
We call this $\ep $ a $\Ga $-{\it action on} $B$.

Let $\ep :B\to B[T]$ be a $\Ga $-action on $B$. 
For each $a\in B^{\ep }$, 
we define 
$$
\ep _a:B\ni b\mapsto \ep (b)|_{T=a}\in B, 
$$ 
where $\ep (b)|_{T=a}$ 
is the value of $\ep (b)\in B[T]$ at $T=a$. 
Clearly, 
we have $B^{\ep }\subset B^{\ep _a}$. 
Note that $\ep _0=\id $ by (A1), 
and $\ep _a\circ \ep _b=\ep _{a+b}$ 
for all $a,b\in B^{\ep }$ by (A2). 
Hence, 
$\ep _a$ has the inverse $\ep _{-a}$, 
and $B^{\ep }\ni a\mapsto \ep _a\in \Aut _RB$ is 
a group homomorphism.

\begin{definition}\label{def:exp}\rm 
We say that $\phi \in \Aut _RB$ is {\it exponential} 
if $\phi =\ep _a$ for some $\Ga $-action $\ep $ on $B$ and 
$a\in B^{\ep }$. 
If this is the case, 
we have $B^{\ep }\subset B^{\phi }$. 
If moreover $\ch R=p>0$, 
then we have $\phi ^p=\id $, 
since $(\ep _a)^p=\ep _{pa}=\ep _0=\id $. 
\end{definition}

\begin{example}\label{example:action on A[x]}\rm 
Let $A$ be an $R$-domain, 
and $A[x]$ the polynomial ring in one variable over $A$. 
For $a\in A\sm \zs $, 
we define 
$\widetilde{\ep }:A[x]\ni f(x)\mapsto f(x+aT)\in A[x][T]$. 

\nd (i) 
$\widetilde{\ep }$ 
is a $\Ga $-action on $A[x]$ 
with $A[x]^{\widetilde{\ep }}=A$. 

\nd (ii) 
$\widetilde{\ep }_b$ is equal to $A[x]\ni f(x)\mapsto f(x+ab)\in A[x]$ 
for each $b\in A$.

\nd (iii) 
Let $S$ be an $R$-subalgebra of $A[x]$ 
with $\widetilde{\ep }(S)\subset S[T]$. 
Then, $\widetilde{\ep }$ restricts to a $\Ga $-action $\ep $ on $S$ 
with $S^{\ep }=A\cap S$. 
In this case, 
$\ep _b$ is the restriction of $\widetilde{\ep }_b$ to $S$ 
for each $b\in A\cap S$. 
\end{example}

The elementary automorphisms of $\kx $ are the exponential automorphisms 
for the $\Ga $-actions as in Example~\ref{example:action on A[x]} 
with $A=k[x_1,\ldots ,x_{n-1}]$ and $x=x_n$.

Finally, we recall the following well-known fact (cf.~\cite{M1}).

\begin{rem}\label{rem:Miyanishi}\rm 
Let $B$ be a $k$-domain, 
and $\ep $ a $\Ga $-action on $B$. 

\nd (i) $B^{\ep }$ is {\it factorially closed} in $B$, 
i.e., $ab\in B^{\ep }$ implies $a,b\in B^{\ep }$ 
for each $a,b\in B\sm \zs $. 

\nd (ii) $B^{\ep }$ is {\it algebraically closed} in $B$, 
i.e., $a\in B$ belongs to $B^{\ep }$ 
if $f(a)=0$ for some $f(T)\in B^{\ep }[T]\sm B^{\ep }$.

\nd (iii) If $B^{\ep }\ne B$ and $\trd _kB<\infty $, 
then we have $\trd _kB^{\ep }=\trd _kB-1$. 
\end{rem}

\subsection{One and two variable cases}

Let $p$ be a prime number, 
$R$ a ring with $\ch R=p$, 
and $R[x]$ the polynomial ring in one variable over $R$. 
For $a\in R\sm \zs $, 
we define 
$\phi \in \Aut _RR[x]$ by $\phi (x)=x+a$. 
Then, $\phi $ is of order $p$, 
since $\bF _p\subset R$. 
Moreover, we have $x^p-a^{p-1}x\in R[x]^{\phi }$, since 
\begin{equation}\label{eq:x^p-ax}
\phi (x^p-a^{p-1}x)
=(x^p+a^p)-a^{p-1}(x+a)
=x^p-a^{p-1}x. 
\end{equation}
With this notation, 
the following lemma holds.

\begin{lem}\label{lem:1var}
If $a$ is not a zero-divisor of $R$, 
then we have 
$R[x]^{\phi }=R[x^p-a^{p-1}x]$. 
\end{lem}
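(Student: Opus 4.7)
The plan is to set $y := x^p - a^{p-1}x$ and prove the two inclusions. The inclusion $R[y] \subset R[x]^{\phi}$ is already recorded in (\ref{eq:x^p-ax}), so the whole content of the lemma is the reverse inclusion. The key structural observation is that $y$ is \emph{monic} in $x$ of degree $p$ with coefficients in $R$, so the ordinary division algorithm applies over $R[y]$: every $f \in R[x]$ admits a unique expansion
$$
f = \sum_{i=0}^{p-1} g_i(y)\,x^i, \qquad g_i(y)\in R[y].
$$
In particular, $R[x]$ is a free $R[y]$-module of rank $p$ with basis $1,x,\dots,x^{p-1}$.

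Next, I would apply $\phi$ to this expansion. Since $\phi$ fixes $R$ and fixes $y$ by (\ref{eq:x^p-ax}), I get $\phi(f) = \sum_{i} g_i(y)\,(x+a)^i$. Expanding $(x+a)^i$ by the binomial theorem, regrouping by powers of $x$, and comparing with the unique expansion of $f = \phi(f)$ yields, for every $j\in\{0,\dots,p-1\}$, the identity
$$
\sum_{i=j+1}^{p-1}\binom{i}{j}a^{i-j}g_i(y) = 0.
$$
I then read these relations in descending order of $j$. The case $j=p-2$ gives $(p-1)\,a\,g_{p-1}(y)=0$. Since $p-1$ is a unit in $\bF_p\subset R$ and $a$ is a non-zero-divisor of $R$ (hence of $R[y]$, by the standard coefficient-wise argument: $a\cdot\sum h_\ell y^\ell=0$ forces each $a h_\ell=0$, hence each $h_\ell=0$), I conclude $g_{p-1}=0$. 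Inductively, once $g_{p-1}=\cdots=g_{m+1}=0$ is known, the equation for $j=m-1$ collapses to $m\,a\,g_m(y)=0$, and the same argument (with $m\in\{1,\dots,p-1\}$ a unit in $\bF_p$) yields $g_m=0$. Hence $g_1=\cdots=g_{p-1}=0$ and $f=g_0(y)\in R[y]$, proving $R[x]^{\phi}\subset R[x^p - a^{p-1}x]$.

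The main obstacle, and essentially the only one, is setting the problem up so that the cascade of equations is driven by a single clean scalar at each step: the entire argument depends on the fact that after killing the higher $g_i$'s, the leading surviving term in the equation for $j=m-1$ is $\binom{m}{m-1}a\,g_m(y) = m\,a\,g_m(y)$ and \emph{not} a more complicated $R$-linear combination. The non-zero-divisor hypothesis on $a$ is needed at every induction step; without it even the first step fails, since the relation produced for $g_{p-1}$ is precisely $a\,g_{p-1}(y)=0$ up to a unit. Everything else—unique division by a monic polynomial, $\phi$-invariance of $y$, and the fact that $\bF_p\subset R$—is either given in the hypotheses or recorded in the excerpt.
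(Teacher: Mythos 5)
Your proof is correct, but it is organized differently from the paper's. The paper argues by induction on $\deg f$: for $f\in R[x]^{\phi}$ of degree $l\ge 1$ with leading coefficient $b$, the identity $0=\phi(f)-f=lab\,x^{l-1}+\cdots$ together with the non-zero-divisor hypothesis forces $p\mid l$, so one subtracts $b(x^p-a^{p-1}x)^{l/p}$ and recurses. You instead first establish that $R[x]$ is a free $R[y]$-module with basis $1,x,\dots,x^{p-1}$ (via division by the monic-in-$x$ element $y$), expand $f$ in this basis, and extract a triangular system of equations whose descending solution kills $g_{p-1},\dots,g_1$ one at a time; the scalar $m\,a$ appearing at step $m$ plays exactly the role of $la b$ in the paper's leading-coefficient step, and the non-zero-divisor hypothesis enters in the same way. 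Your route is slightly longer but buys more: it exhibits $R[x]=\bigoplus_{i=0}^{p-1}R[y]x^i$ explicitly, which is the rank-$p$ freeness statement that the paper only records later (Remark~\ref{rem:pl for ch order autom}) and only for the normalized case $\phi(s)=s+1$; the paper's induction is shorter and avoids having to justify uniqueness of the basis expansion. Both proofs are complete and both isolate the same two essential inputs: $\bF_p\subset R$ (so the integers $1,\dots,p-1$ are units) and $a$ a non-zero-divisor.
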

\begin{proof}
We show that 
$f$ belongs to $R[x^p-a^{p-1}x]$ 
for all $f\in R[x]^{\phi }$ 
by induction on $l:=\deg f$. 
The assertion is clear if $l\leq 0$. 
Assume that $l\geq 1$, 
and let $b\in R\sm \zs $ be the leading coefficient of $f$. 
Then, 
we have $0=\phi (f)-f=labx^{l-1}+\cdots $. 
Since $a$ is not a zero-divisor of $R$, 
this implies $p\mid l$. 
Set $f':=f-b(x^p-a^{p-1}x)^{l/p}\in R[x]^{\phi }$. 
Then, 
$\deg f'$ is less than $l$. 
Hence, 
$f'$ belongs to $R[x^p-a^{p-1}x]$ by induction assumption. 
Therefore, 
$f=f'+b(x^p-a^{p-1}x)^{l/p}$ belongs to 
$R[x^p-a^{p-1}x]$. 
\end{proof}

The following theorem\footnote{
The author announced this theorem, 
together with a counterexample to Question~\ref{question}, 
on the occasion of the 13th meeting 
of Affine Algebraic Geometry at Osaka 
on March 5, 2015 (see \cite{Miyanishi3}).} 
(cf.~\cite{Miyanishi3}, \cite{Maubach}) 
is based on 
the well-known fact that $\Aut _kk[x_1,x_2]$ is the amalgamated 
product of $\Aff _2(k)$ and the triangular subgroup.

\begin{thm}\label{thm:Osaka}
Let $k$ be a field of characteristic $p>0$, 
and let $\phi \in \Aut _kk[x_1,x_2]$ be of order $p$. 
Then, 
there exist $X_1,X_2\in k[x_1,x_2]$ and $f\in k[X_1]\sm \zs $ 
such that $k[X_1,X_2]=k[x_1,x_2]$, 
$\phi (X_1)=X_1$ and $\phi (X_2)=X_2+f$. 
\end{thm}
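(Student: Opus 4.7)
The plan is to combine the Jung--van der Kulk structure theorem for $\Aut _kk[x_1,x_2]$ with Bass--Serre theory to reduce to the triangular subgroup, and then to invoke the additive Hilbert~90.

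First I would use the sharper form of the Jung--van der Kulk theorem: $\Aut _kk[x_1,x_2]$ is the amalgamated free product $\Aff _2(k)*_H J_2(k)$, where $J_2(k)$ is the triangular (Jonqui\`eres) subgroup of automorphisms $\sigma (x_1)=ax_1+h(x_2)$, $\sigma (x_2)=bx_2+c$, and $H=\Aff _2(k)\cap J_2(k)$. By a standard theorem of Serre on amalgamated products, every element of finite order is conjugate to an element of one of the two factors. An affine $\phi $ of order $p$ has unipotent linear part, since the relation $A^p=I$ forces the minimal polynomial of $A$ to divide $(T-1)^p$; a linear change of coordinates moving a fixed line of $A$ to the $x_2$-axis then places $\phi $ already in $J_2(k)$. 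So we may assume, after replacing $\phi $ by a conjugate, that $\phi \in J_2(k)$.

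Next I would write $\phi (x_1)=ax_1+h(x_2)$ and $\phi (x_2)=bx_2+c$, and observe that iterating forces $a^p=b^p=1$, whence $a=b=1$ in characteristic $p$. If $c=0$, the pair $X_1:=x_2$, $X_2:=x_1$, $f:=h$ solves the problem, with $h\ne 0$ since $\phi \ne \id $. If $c\ne 0$, computing $\phi ^p(x_1)$ shows the order condition is equivalent to the trace identity $\sum _{k=0}^{p-1}h(x_2+kc)=0$. Additive Hilbert~90 for the cyclic action $x_2\mapsto x_2+c$ on $k[x_2]$ (see below) then furnishes $g\in k[x_2]$ with $h(x_2)=g(x_2+c)-g(x_2)$; with $X_1:=x_1-g(x_2)$, $X_2:=x_2$, $f:=c$ one verifies directly that $\phi (X_1)=X_1$, $\phi (X_2)=X_2+f$, and $k[X_1,X_2]=k[x_1,x_2]$. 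The conjugation of the first step is then undone by pulling the $X_i$ back.

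The main obstacle is the additive Hilbert~90 step. The cleanest justification uses Lemma~\ref{lem:1var}: the fixed subring of $x_2\mapsto x_2+c$ on $k[x_2]$ is $k[X]$ with $X:=x_2^p-c^{p-1}x_2$, and $k[x_2]$ is free of rank $p$ over $k[X]$. The normal basis theorem for this Artin--Schreier-type extension shows that $k[x_2]$ is in fact free of rank one over the group ring $k[X][\Z /p\Z ]$, so that $H^1(\Z /p\Z ,k[x_2])=0$; this cohomological vanishing is precisely the statement that every trace-zero $h$ is of the form $g(x_2+c)-g(x_2)$. Alternatively one may argue by induction on $\deg h$, subtracting explicit $\Delta (\alpha x_2^{n+1})$ when $p\nmid n+1$ and using Frobenius to handle the degrees with $p\mid n+1$.
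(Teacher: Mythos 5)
Your proof is correct and follows exactly the route the paper indicates: the Jung--van der Kulk amalgamated product structure of $\Aut _kk[x_1,x_2]$, Serre's conjugacy theorem for torsion elements (reducing to the affine and then the de Jonqui\`eres case), and an additive Hilbert~90 argument for the triangular case --- note that your trace identity $\sum _{k=0}^{p-1}h(x_2+kc)=0$ in fact already forces $p\nmid \deg h+1$, so even the elementary induction closes without the Frobenius case. The paper gives no proof of this theorem, only the remark that it rests on the amalgamated product decomposition together with references, so your write-up is a faithful elaboration of the intended argument.
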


\subsection{Plinth ideal}\label{sect:plinth}

To begin with, 
let $B\subset B'$ be any rings, 
$\iota :B\to B'$ the inclusion map, 
and $\phi :B\to B'$ a ring homomorphism. 
We define $\delta :=\phi -\iota :
B\ni b\mapsto \phi (b)-b\in B'$. 
Then, $\delta $ is a $B^{\phi }=\ker \delta $-linear map. 
Moreover, 
the following (1) through (6) hold:

\smallskip 

\nd (1) $\delta (B)\cap B^{\phi }$ is an ideal of $B^{\phi }$. 
Indeed, 
since $\delta (B)$ and $B^{\phi }$ are $B^{\phi }$-submodules of $B'$, 
we see that $\delta (B)\cap B^{\phi }$ is a $B^{\phi }$-submodules of $B'$, 
and hence of $B^{\phi }$.

\nd (2) 
$\delta (b^l)=\phi (b)^l-b^l
=\delta (b)\sum _{i=0}^{l-1}\phi (b)^ib^{l-1-i}$ 
holds 
for each $b\in B$ and $l\ge 1$. 

\nd (3) If $\ch B=p$ is a prime number, 
then 
$\delta (b^{p^e})=(\phi (b)-b)^{p^e}=\delta (b)^{p^e}$ 
holds for each $b\in B$ and $e\ge 0$.

\nd (4) $\delta (ab)
=(\phi (a)-a)b+\phi (a)(\phi (b)-b)
=\delta (a)b+\phi (a)\delta (b)
=\delta (a)b+(\delta (a)+a)\delta (b)$ 
for each $a,b\in B$.

\nd (5) If $B=R[b_1,\ldots ,b_n]$ 
for some subring $R$ of $B^{\phi }$ and $b_1,\ldots ,b_n\in B$, 
then 
$\delta (B)\subset \sum _{i=1}^n\delta (b_i)A$ 
holds for 
$A:=B[\delta (b_1),\ldots ,\delta (b_n)]$. 
In fact, 
using (4), 
we can prove 
$\delta (b_1^{i_1}\cdots b_n^{i_n})\in \sum _{i=1}^n\delta (b_i)A$ 
for all $i_1,\ldots ,i_n\ge 0$ by induction on $i_1+\cdots +i_n$.

\nd (6) For a $\Ga $-action $\ep :B\to B[T]$ and $a\in B^{\ep }$, 
we set $\delta :=\ep -\iota :B\to B[T]$ and 
$\delta _a:=\ep _a-\id :B\to B$. 
Then, 
we have $\delta (B)\subset TB[T]$ by (A1), 
and so $\delta _a(B)\subset aB$. 

\smallskip

Next, 
we consider the case where $B=B'$. 
For $\phi \in \Aut B$, 
we define 
\begin{equation}\label{eq:def pl phi}
\pl (\phi ):=\delta (B)\cap B^{\phi },
\quad\text{where}\quad 
\delta :=\phi -\id . 
\end{equation}
By (1), 
$\pl (\phi )$ is an ideal of $B^{\phi }$, 
which we call the {\it plinth ideal} of $\phi $.

\begin{lem}\label{lem:pl principal}
In the notation above, 
the following assertions hold. 

\nd {\rm (i)} 
If $a\in \pl (\phi )$ is not a zero-divisor of $B$ 
and $\delta (B)\subset aB$, 
then $\pl (\phi )=aB^{\phi }$.

\nd {\rm (ii)} 
Assume that $B$ is a UFD, 
and $\pl (\phi )$ is a principal ideal of $B^{\phi }$. 
If $a,b\in \pl (\phi )\sm \zs $ satisfy 
$\gcd (a,b)\in B^{\phi }$, 
then $\gcd (a,b)$ belongs to $\pl (\phi )$. 
\end{lem}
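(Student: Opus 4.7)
The plan for (i) is to verify both inclusions. One inclusion $aB^{\phi }\subset \pl (\phi )$ is immediate from item (1) in Section~\ref{sect:plinth}, since $\pl (\phi )$ is an ideal of $B^{\phi }$ containing $a$. For the reverse inclusion, I would take an arbitrary $c\in \pl (\phi )$; by hypothesis $c\in \delta (B)\subset aB$, so one can write $c=ag$ for some $g\in B$. Applying $\phi $ to this equation and using $\phi (a)=a$ and $\phi (c)=c$ gives $a(\phi (g)-g)=0$, and the hypothesis that $a$ is not a zero-divisor of $B$ forces $\phi (g)=g$. Thus $c=ag\in aB^{\phi }$, as required.

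For (ii), the plan is to exploit the principality of $\pl (\phi )$ together with unique factorization in $B$. Fix a generator $c\in B^{\phi }$ with $\pl (\phi )=cB^{\phi }$; since $a\in \pl (\phi )\sm \zs $, the generator $c$ is necessarily nonzero. Because $a,b\in \pl (\phi )=cB^{\phi }$, the element $c$ divides both $a$ and $b$ in $B$, and hence divides any greatest common divisor in the UFD $B$; in particular $c$ divides the specified element $\gcd (a,b)\in B^{\phi }$, so one can write $\gcd (a,b)=ce$ for some $e\in B$. Applying $\phi $ to this equation and using $\phi (c)=c$ together with $\phi (\gcd (a,b))=\gcd (a,b)$ yields $c(\phi (e)-e)=0$; since $B$ is a domain and $c\ne 0$, one concludes $\phi (e)=e$. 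Hence $\gcd (a,b)=ce\in cB^{\phi }=\pl (\phi )$.

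Neither part presents a genuine obstacle: both arguments reduce to cancellation of a non-zero-divisor after applying $\phi $ to a divisibility relation. The one point that deserves a moment of care is confirming in (ii) that the generator $c$ of $\pl (\phi )$ is nonzero before invoking cancellation in the domain $B$, but this is ensured by the hypothesis that $\pl (\phi )$ contains the nonzero element $a$. The conceptual content is simply that the ideal structure of $\pl (\phi )$, combined with cancellation, lets one transfer divisibility statements in $B$ back into $B^{\phi }$.
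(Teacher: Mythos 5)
Your proof is correct and follows essentially the same route as the paper: both parts reduce to applying $\phi$ to a factorization with one factor in $B^{\phi}$ and cancelling the non-zero-divisor. The only cosmetic difference is in (ii), where the paper writes $a=ca'$, $b=cb'$ with $a',b'\in B^{\phi}$ and uses $\gcd(a,b)=c\gcd(a',b')$, whereas you invoke directly that the common divisor $c$ divides $\gcd(a,b)$; both are standard UFD facts leading to the same cancellation step.
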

\begin{proof}
(i) 
Note that $aB^{\phi }\subset \pl (\phi )
=\delta (B)\cap B^{\phi }\subset aB\cap B^{\phi }$, 
since $a\in \pl (\phi )$ and $\delta (B)\subset aB$ by assumption. 
Hence, 
it suffices to show that $aB\cap B^{\phi }\subset aB^{\phi }$, 
i.e., 
$b\in B$ and $ab\in B^{\phi }$ imply $b\in B^{\phi }$. 
Since $ab\in B^{\phi }$ and 
$a\in \pl (\phi )\subset B^{\phi }$, 
we have $ab=\phi (ab)=a\phi (b)$. 
Since $a$ is not a zero-divisor, 
it follows that $b=\phi (b)$.

(ii) Choose $c\in \pl (\phi )$ 
with $\pl (\phi )=cB^{\phi }$. 
Since $a,b\in \pl (\phi )\sm \zs $, 
we have $c\ne 0$, and 
$a=ca'$ and $b=cb'$ for some $a',b'\in B^{\phi }$. 
Then, 
we get 
$\gcd (a,b)=c\gcd (a',b')$. 
Since $\gcd (a,b)\in B^{\phi }$ by assumption, 
and $c\in \pl (\phi )$, 
this implies $\gcd (a',b')\in B^{\phi }$ 
as in the proof of (i). 
Hence, 
$\gcd (a,b)=c\gcd (a',b')$ belongs to $cB^{\phi }=\pl (\phi )$. 
\end{proof}

\begin{example}\rm
In the situation of Lemma~\ref{lem:1var}, 
we have $\pl (\phi )
=aR[x^p-a^{p-1}x]$ by Lemma~\ref{lem:pl principal}  (i), 
since 
$\delta (x)=\phi (x)-x=a\in \pl (\phi )$, 
and $\delta (R[x])\subset aR[x]$ by (5). 
\end{example}

\begin{rem}\label{rem:pl for ch order autom}\rm 
Let $B$ be a domain with $\ch B=p>0$ 
and $\phi \in \Aut B$ of order $p$.

\nd (i) It is well known that 
$B=\bigoplus _{i=0}^{p-1}B^{\phi }s^i$ 
holds for every $s\in B$ with 
$\phi (s)=s+1$ 
(cf.~e.g., \cite[Lemma 2.4]{Tani} 
for a proof using a pseudo-derivation). 
Here is another proof: 
$\phi $ extends to an automorphism of $Q(B)$ of order $p$. 
Since $[Q(B):Q(B)^{\phi }]=p$ and $s\not\in Q(B)^{\phi }$, we get 
$Q(B)=Q(B)^{\phi }(s)=\bigoplus _{i=0}^{p-1}Q(B)^{\phi }s^i$. 
Now, suppose that $B\ne \bigoplus _{i=0}^{p-1}B^{\phi }s^i$, 
and pick 
$b\in B\sm \bigoplus _{i=0}^{p-1}B^{\phi }s^i$. 
Then, 
since $b\in Q(B)$, 
we can write $b=\sum _{i=0}^{p-1}b_is^i$, 
where $b_i\in Q(B)^{\phi }$. 
Subtracting $b_is^i\in B$ from $b$ if $b_i\in B^{\phi }$, 
we may assume that 
$b=\sum _{i=0}^lb_is^i$ and $b_l\not\in B^{\phi}$ 
for some $1\le l<p$. 
Choose $b$ with least $l$. 
Then, noting $\delta (s^i)=(s+1)^i-s^i=is^{i-1}+\cdots $, 
we can write 
$\delta (b)=\sum _{i=0}^lb_i\delta (s^i)=lb_ls^{l-1}+
\sum _{i=0}^{l-2}b_i's^i$, 
where $b_i'\in Q(B)^{\phi }$. 
Since $\delta (b)\in \delta (B)\subset B$ 
and $lb_l\in Q(B)^{\phi }\sm B^{\phi }$, 
this contradicts the minimality of $l$.

\nd (ii) 
By (i), 
$B$ is a free $B^{\phi }$-module of rank $p$ 
if there exists $s\in B$ with $\phi (s)=s+1$. 
Even if such $s$ does not exist, 
the $B^{\phi }[1/u]$-module $B[1/u]$ is free of rank $p$ 
for all $u\in \pl (\phi )\sm \zs $. 
In fact, 
$\phi $ extends to an automorphism of $B[1/u]$ 
with $B[1/u]^{\phi }=B^{\phi }[1/u]$ 
and $\phi (t/u)=t/u+1$, 
where $t\in B$ is such that $u=\delta (t)$, 
i.e., $\phi (t)=t+u$. 
\end{rem}

\section{A rank three $\Ga $-action yields counterexamples to 
Question~\ref{question} }\label{sect:key}
\setcounter{equation}{0}

Assume that $\ch k=p>0$. 
Let $\ep $ be a $\Ga $-action on $\kx =k[x_1,x_2,x_3]$. 
For $h\in \kx ^{\ep }$, 
we define $\ep _h\in \Ex _3(k)$. 
Recall that 
$\pl (\ep _h)=\delta _h(\kx )\cap \kx ^{\ep _h}$, 
where $\delta _h:=\ep _h-\id $. 
The goal of this section is to 
prove the following theorem.

\begin{thm}\label{thm:main}
Assume that $\ep $ is of rank three, 
i.e., 
$\gamma (\kx ^{\ep })=0$. 
If $h\in \kx ^{\ep }$ satisfies the following condition $\clubsuit$, 
then we have $\gamma (\kx ^{\ep _h})=0$.

\nd 
$\clubsuit$ 
{\rm 
There exist 
$f_1,f_2\in \kx ^{\ep }$ such that 
$\pl (\ep _h)\subset f_1f_2\kx $ and 
$\trd_kk[f_1,f_2]=2$.} 

\end{thm}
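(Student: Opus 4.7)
The plan is to argue by contradiction. Assume $\gamma(\kx^{\ep_h})\ge 1$, so there is $\sigma\in\Aut_k\kx$ with $y:=\sigma(x_1)\in\kx^{\ep_h}$, making $y$ a variable of $\kx$ that lies in $\kx^{\ep_h}$. The goal is to derive $y\in\kx^\ep$, which will contradict $\gamma(\kx^\ep)=0$.

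The first step is to show that $\ep$ restricts to a $\Ga$-action $\ep\colon\kx^{\ep_h}\to\kx^{\ep_h}[T]$, with the same invariant ring $\kx^\ep$. Given $a\in\kx^{\ep_h}$ and $h'\in\kx^\ep\subset\kx^{\ep_h}$, commutativity $\ep_h\ep_{h'}=\ep_{h'}\ep_h$ gives $\ep_{h'}(a)\in\kx^{\ep_h}$. Writing $\ep(a)=\sum_iD_i(a)T^i\in\kx[T]$, the polynomial $\sum_i\bigl(\ep_h(D_i(a))-D_i(a)\bigr)T^i\in\kx[T]$ vanishes at every $T=h'\in\kx^\ep$. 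Since $\kx^\ep$ is infinite (its transcendence degree over $k$ equals $2$ by Remark~\ref{rem:Miyanishi}(iii) and the rank-three hypothesis) and $\kx$ is a domain, each coefficient vanishes, so $D_i(a)\in\kx^{\ep_h}$.

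Applying this to $y$, set $\psi(T):=\ep(y)-y=\sum_{i\ge 1}D_i(y)T^i\in T\kx^{\ep_h}[T]$; since $\ep_h(y)=y$, $\psi(h)=0$. Assuming $h\ne 0$ (the case $h=0$ yields $\ep_h=\id$, which must be excluded as it is inconsistent with $\gamma(\kx^{\ep_h})=0$), we factor $\psi(T)=T(T-h)q(T)$ for some $q(T)\in\kx^{\ep_h}[T]$. The problem reduces to proving $q=0$, for then $\psi=0$ and $y\in\kx^\ep$.

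The decisive step exploits $\clubsuit$. Evaluating $\psi$ at $T=h'\in\kx^\ep$ gives $\delta_{h'}(y)=h'(h'-h)q(h')\in\kx^{\ep_h}$. I would then establish $\delta_{h'}(y)\in\pl(\ep_h)$ by producing an explicit $b_{h'}\in\kx$ with $\delta_h(b_{h'})=\delta_{h'}(y)$, using the derivation identities of \S\ref{sect:plinth}\,(4)--(5) and the commutativity of $\ep_h$ and $\ep_{h'}$ (a natural candidate for $b_{h'}$ is a suitable twist of $y$ coming from the $\ep$-orbit). Once this is in hand, $\clubsuit$ forces $f_1f_2\mid h'(h'-h)q(h')$ in $\kx$ for every $h'\in\kx^\ep$. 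Because $f_1,f_2$ are algebraically independent over $k$, one can choose $h'$ so that $h'(h'-h)$ is coprime to $f_1f_2$; this promotes the divisibility to $f_1f_2\mid q(h')$. A polynomial-interpolation argument (using the bounded $T$-degree of $q$ and the infinitude of $\kx^\ep$) then lifts this pointwise divisibility to $f_1f_2\mid q(T)$ in $\kx^{\ep_h}[T]$; replacing $q$ by $q/(f_1f_2)$ and iterating exhausts the degree of $\psi$, yielding $q=0$.

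The main obstacle, as I see it, is twofold: first, the identification $\delta_{h'}(y)\in\pl(\ep_h)$, which demands a uniform construction of preimages under $\delta_h$; and second, the promotion of pointwise divisibility by $f_1f_2$ to divisibility of $q(T)$ as a polynomial, which rests delicately on the factorial closure of $\kx^\ep$ in $\kx$ (Remark~\ref{rem:Miyanishi}(i)) and the algebraic independence of $f_1,f_2$ being used in a genuinely multiplicative way.
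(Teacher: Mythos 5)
Your opening reduction---assume a coordinate $y=\sigma(x_1)$ lies in $\kx ^{\ep _h}$ and try to force $y\in \kx ^{\ep }$---aims at a true statement, and your preliminary observation that $\ep $ restricts to a $\Ga $-action on $\kx ^{\ep _h}$ (via commutativity of $\ep _h$ and $\ep _{h'}$ and the infinitude of $\kx ^{\ep }$) is correct. But the step you yourself flag as the main obstacle is not a technicality to be filled in later; it is false as stated. You need $\delta _{h'}(y)\in \pl (\ep _h)$, i.e.\ $\delta _{h'}(y)\in \delta _h(\kx )$. By \S \ref{sect:plinth}~(6) we have $\delta _h(\kx )\subset h\kx $, so your claim would force $h\mid h'(h'-h)q(h')$ for \emph{every} $h'\in \kx ^{\ep }$; taking $h'=1$ this already requires $h\mid q(1)$, for which there is no reason. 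A preimage $b_{h'}$ with $\delta _h(b_{h'})=\delta _{h'}(y)$ exists essentially only when $h'\in \Z h$ (where $\delta _{mh}(y)=\delta _h\bigl(\sum _{j=0}^{m-1}\ep _h^j(y)\bigr)$), a finite and useless set. Separately, even granting that step, the concluding iteration cannot close: $\clubsuit$ supplies exactly one factor $f_1f_2$ of each element of $\pl (\ep _h)$, so after replacing $q$ by $q/(f_1f_2)$ there is no new divisibility to feed back in, and $q/(f_1f_2)$ has the same $T$-degree as $q$, so nothing is ``exhausted.'' (A minor shared issue: $h$ with $\ep _h=\id $ satisfies $\clubsuit$ vacuously and must be excluded.)

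The paper's proof runs along entirely different lines, and the ingredients it uses are ones your plan never touches. From $R:=\sigma (k[x_1])\subset \kx ^{\ep _h}$ one regards $\ep _h$ as an order-$p$ automorphism of $R[y_2,y_3]$ and applies Theorem~\ref{thm:Osaka} over $K=Q(R)$: this produces (Lemma~\ref{thm:criterion}) an $R$-subalgebra $B=K[X_1]\cap \kx $ of $\kx ^{\ep _h}$ with $\trd _kB=2$ that is factorially closed and algebraically closed in $\kx $, and $\pl (\ep _h)\subset f_1f_2\kx $ together with factorial closedness forces $f_1f_2\in B$. Since $\kx ^{\ep }$ has the same three properties and also contains $f_1f_2$, Lemma~\ref{lem:acfc} identifies $B=\kx ^{\ep }$ (both are the algebraic closure of $k[f_1,f_2]$ in $\kx $), whence $\sigma (k[x_1])\subset \kx ^{\ep }$, contradicting rank three. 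So the role of $\clubsuit$ is to pin down a two-dimensional factorially closed subalgebra, not to control divisibility of the coefficients of $\ep (y)$. I would abandon the interpolation strategy and work instead with the two-variable structure theorem over $Q(R)$.
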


Since $\trd _k\kx ^{\ep }=2$ 
by Remark~\ref{rem:Miyanishi} (iii), 
there always exist $f_1,f_2\in \kx ^{\ep }$ 
such that $\trd_kk[f_1,f_2]=2$. 
Then, 
$h:=f_1f_2$ satisfies $\clubsuit$, 
since $\pl (\ep _h)\subset \delta _h(\kx )\subset h\kx $ 
by \S \ref{sect:plinth} (6). 
Hence, 
the existence of a rank three $\Ga $-action on $\kx $ 
implies the existence of $\phi \in \Ex _3(k)$ 
with $\gamma (\kx ^{\phi })=0$ 
(cf.~Corollary~\ref{cor:rank3}).

\begin{lem}\label{thm:criterion}
Let $R$ be a domain with $\ch R=p>0$, 
and $\phi \in \Aut _RR[x_1,x_2]$ of order $p$. 
Then, for each $q\in R[x_1,x_2]$ with 
$\pl (\phi )\subset qR[x_1,x_2]$, 
there exists an $R$-subalgebra $B$ of $R[x_1,x_2]^{\phi }$ 
such that $q\in B$ and the following $(*)$ holds: 

\nd 
$(*)$ $\trd _RB=1$, 
and $B$ is factorially closed and 
algebraically closed in $R[x_1,x_2]$. 

\end{lem}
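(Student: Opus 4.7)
The plan is to reduce the problem to the structure theorem over the fraction field $Q(R)$ and then define $B$ as an intersection. Since $\phi$ has order $p$, I first extend $\phi$ to $Q(R)[x_1,x_2]$ and apply Theorem~\ref{thm:Osaka} over $Q(R)$ to produce $X_1,X_2\in Q(R)[x_1,x_2]$ and $f\in Q(R)[X_1]\setminus \{0\}$ with $Q(R)[X_1,X_2]=Q(R)[x_1,x_2]$, $\phi(X_1)=X_1$, and $\phi(X_2)=X_2+f$. The candidate subalgebra will be
$$
B:=Q(R)[X_1]\cap R[x_1,x_2].
$$

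Two of the three conditions in $(*)$ are essentially formal. Since $Q(R)[X_1]$ is the kernel of $\partial/\partial X_2$ on $Q(R)[X_1,X_2]$, it is both factorially closed and algebraically closed in $Q(R)[x_1,x_2]$; intersecting with $R[x_1,x_2]$ transports both closedness properties to $B$ inside $R[x_1,x_2]$, and also gives $B\subset R[x_1,x_2]^{\phi}$ because $Q(R)[X_1]$ is pointwise fixed. For $\trd_R B=1$, I will clear a denominator of $X_1$: writing $X_1=d^{-1}X_1'$ with $0\ne d\in R$ and $X_1'\in R[x_1,x_2]$, the element $X_1'$ lies in $B$ and is transcendental over $R$, while the upper bound $\trd_R B\le 1$ is immediate from $B\subset Q(R)[X_1]$.

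The main step, and the only nontrivial one, is to show that $q\in B$. My strategy is to manufacture a concrete nonzero element of $\pl (\phi )$ that is forced to be divisible by $q$, and whose factorization in the UFD $Q(R)[X_1,X_2]$ can then be analyzed. Choose $0\ne c\in R$ such that $cX_2\in R[x_1,x_2]$, which is possible by clearing denominators of $X_2$ expanded in $x_1,x_2$. Then $\delta (cX_2)=cf$ belongs to $R[x_1,x_2]$, and also to $Q(R)[X_1]\subset Q(R)[x_1,x_2]^{\phi}$, so it lies in $R[x_1,x_2]^{\phi}\cap \delta (R[x_1,x_2])=\pl (\phi )$. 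The hypothesis then yields $cf=qg$ for some $g\in R[x_1,x_2]$, and reading this equation inside $Q(R)[X_1,X_2]$ and invoking the factorial closedness of $Q(R)[X_1]$ in this polynomial ring gives $q,g\in Q(R)[X_1]$, whence $q\in B$.

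The obstacle I expect is precisely this last step: the challenge is that the data of Theorem~\ref{thm:Osaka} lives over $Q(R)$ rather than over $R$, so the hypothesis on $q$ (which lives over $R$) cannot be used directly. The trick of clearing a denominator in $X_2$ to produce the explicit plinth element $cf\in \pl (\phi )$ is what bridges the two worlds; once such an element is available, the divisibility argument reduces to the factorial closedness of $Q(R)[X_1]$, and everything else is routine.
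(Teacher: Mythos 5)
Your proposal is correct and follows essentially the same route as the paper: extend $\phi$ to $Q(R)[x_1,x_2]$, apply Theorem~\ref{thm:Osaka}, set $B:=Q(R)[X_1]\cap R[x_1,x_2]$, clear a denominator of $X_2$ to exhibit the explicit plinth element $cf\in\pl(\phi)$, and conclude $q\in B$ by factorial closedness (the paper instead rescales $X_2$ into $R[x_1,x_2]$ at the outset, which is the same device). One small caveat: in characteristic $p$ the kernel of $\partial/\partial X_2$ on $Q(R)[X_1,X_2]$ is $Q(R)[X_1,X_2^p]$, not $Q(R)[X_1]$, so that particular justification fails, but the factorial and algebraic closedness of $Q(R)[X_1]$ in $Q(R)[x_1,x_2]$ still holds by an elementary $X_2$-degree argument, so the proof stands.
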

\begin{proof}
Put $K:=Q(R)$. 
Let $\tilde{\phi }\in \Aut _KK[x_1,x_2]$ be 
the extension of $\phi $. 
By Theorem~\ref{thm:Osaka}, 
there exist $X_1,X_2\in K[x_1,x_2]$ and $g\in K[X_1]\sm \zs $ 
such that $K[X_1,X_2]=K[x_1,x_2]$, 
$\tilde{\phi }(X_1)=X_1$ and $\tilde{\phi }(X_2)=X_2+g$. 
Multiplying by $X_2$ an element of $K^*$, 
we may assume that $X_2$ lies in $R[x_1,x_2]$. 
Now, set $B:=K[X_1]\cap R[x_1,x_2]$. 
Then, we have $\trd _RB=1$; 
$B\subset R[x_1,x_2]^{\phi }$, 
since $\tilde{\phi }(X_1)=X_1$; 
and 
$B$ is factorially closed and algebraically closed in $R[X_1,X_2]$, 
since so is $K[X_1]$  in $K[x_1,x_2]$.

From $g\in K[x_1]$, $X_2\in R[x_1,x_2]$ and 
$\phi (X_2)=X_2+g$, 
we see that $g=\phi (X_2)-X_2$ is in $B$. 
Since 
$B\subset R[x_1,x_2]^{\phi }$, 
this also shows $g\in \pl (\phi )$. 
Hence, 
we have $g=qh$ 
for some $h\in R[x_1,x_2]$, 
since $\pl (\phi )\subset qR[x_1,x_2]$ by assumption. 
Note that $qh=g$ is in $B$. 
Since $B$ is factorially closed in $R[x_1,x_2]$, 
it follows that $q$ lies in $B$. 
\end{proof}

The following lemma is true for any field $k$, 
and is readily verified.

\begin{lem}\label{lem:acfc}
Let $A$ be a $k$-domain, 
and let $B$ and $B'$ be 
$k$-subalgebras of $A$. 
Assume that $r:=\trd _kB=\trd _kB'<\infty $, 
and $B$ and $B'$ are 
factorially closed and algebraically closed in $A$. 
If there exist $a_1,\ldots ,a_r\in A$ 
such that the product $a_1\cdots a_r$ belongs to 
$B$ and $B'$, 
and $\trd _kk[a_1,\ldots ,a_r]=r$, 
then we have $B=B'$. 
\end{lem}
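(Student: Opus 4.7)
The plan is to establish $B' \subseteq B$; the reverse inclusion then follows by swapping the roles of $B$ and $B'$. The two hypotheses play complementary roles: factorial closure will place each individual $a_i$ into both $B$ and $B'$, and algebraic closure will then transfer an arbitrary element of $B'$ into $B$ by way of an algebraic relation over $k[a_1,\ldots ,a_r]$.

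First I would observe that, since $a_1\cdots a_r\in B$ and $B$ is factorially closed in the domain $A$, a straightforward induction on $r$ yields $a_i\in B$ for every $i$; the identical reasoning applied to $B'$ gives $a_i\in B'$ for every $i$. Hence $k[a_1,\ldots ,a_r]\subseteq B\cap B'$, and by hypothesis the $a_i$ are algebraically independent over $k$. Next, given $b\in B'$, the equality $\trd _kB'=r$ forces $b$ to be algebraic over $k(a_1,\ldots ,a_r)$. Taking the minimal polynomial of $b$ over $k(a_1,\ldots ,a_r)$ and clearing denominators by a nonzero element of $k[a_1,\ldots ,a_r]$ produces a polynomial $f(T)\in k[a_1,\ldots ,a_r][T]\subseteq B[T]$ of positive $T$-degree with $f(b)=0$. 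Because $B$ is algebraically closed in $A$, this yields $b\in B$, so $B'\subseteq B$.

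The only point requiring care is verifying in the last step that the cleared-denominators polynomial really lies in $B[T]\setminus B$ rather than being a nonzero element of $B$, but this is immediate: the minimal polynomial is monic of degree $\geq 1$, and multiplying it by a nonzero element of $k[a_1,\ldots ,a_r]$ preserves its $T$-degree. I do not expect any substantive obstacle beyond this bit of bookkeeping.
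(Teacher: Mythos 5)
Your proof is correct and is essentially the paper's argument in expanded form: the paper simply notes that both $B$ and $B'$ coincide with the algebraic closure of $k[a_1,\ldots,a_r]$ in $A$, which is exactly what your two steps (factorial closure to get each $a_i$ into both subalgebras, then algebraic closure plus the transcendence-degree count to absorb everything else) establish.
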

\begin{proof}
Both $B$ and $B'$ are equal to 
the algebraic closure of 
$k[a_1,\ldots ,a_r]$ in $A$. 
\end{proof}

\begin{proof}[Proof of Theorem~{\rm \ref{thm:main}}] 
Suppose that $\gamma (\kx ^{\ep _h})\ge 1$, 
and let $\sigma \in \Aut _k\kx $ be such that 
$R:=\sigma (k[x_1])\subset \kx ^{\ep _h}$. 
Then, 
$\ep _h$ is viewed as an element of $\Aut _RR[y_2,y_3]$, 
where $y_i:=\sigma (x_i)$. 
Since $\ep _h$ is exponential, 
$\ep _h$ is of order $p$. 
Moreover, 
we have 
$\pl (\ep _h)\subset f_1f_2\kx $ 
by the assumption $\clubsuit$. 
Hence, by Lemma~\ref{thm:criterion}, 
there exists an $R$-subalgebra 
$B$ of $\kx ^{\ep _h}$ such that $f_1f_2\in B$; 
$\trd _RB=1$, 
i.e., $\trd _kB=2$; 
and $B$ is factorially closed and 
algebraically closed in $\kx $. 
By Remark~\ref{rem:Miyanishi}, 
$\kx ^{\ep }$ is also factorially closed and algebraically closed in $\kx $, 
and $\trd _k\kx ^{\ep }=2$. 
We also have $f_1f_2\in \kx ^{\ep }$, 
and $\trd _kk[f_1,f_2]=2$ by $\clubsuit$. 
Thus, 
we get $B=\kx ^{\ep }$ by Lemma~\ref{lem:acfc}. 
Since $\sigma (k[x_1])=R\subset B$, 
this contradicts that $\gamma (\kx ^{\ep })=0$. 
\end{proof}

\section{A family of $\Ga $-actions}\label{sect:rank3 family}
\setcounter{equation}{0}

In Sections~\ref{sect:rank3 family} and 
\ref{sect:invariant ring}, 
we construct a family of $\Ga $-actions $\ep $ 
on $\kx =k[x_1,x_2,x_3]$ of rank three, 
and study $\ep _h\in \Ex _3(k)$ 
for $h\in \kx ^{\ep }$.

\subsection{Construction of the $\Ga $-actions}\label{sect:rank3 construction}

For the moment, 
let $k$ be any field with $\ch k=p\ge 0$. 
We fix 
$l,m\ge 1$ and $t\ge 2$ with $mt\ge 3$. 
We define $f:=x_1x_3-x_2^t$, 
$r:=f^lx_2+x_1^m$ and 
\begin{align}\label{eq:rank3 g}
g&:=x_1^{-1}(f^{lt+1}+r^t)
=x_1^{-1}
(f^{lt}(x_1x_3-x_2^t)+(f^lx_2+x_1^m)^t) \\ 
&=f^{lt}x_3+g^*+x_1^{mt-1},
\text{ where }
g^*:=x_1^{-1}((f^lx_2+x_1^m)^t-f^{lt}x_2^t-x_1^{mt}). 
\notag 
\end{align}
We note the following:

\nd {\bf 1}$^\circ $ 
$x_1=g^{-1}(f^{lt+1}+r^t)$, 
$x_2=f^{-l}(r-x_1^m)$ 
and $x_3=f^{-lt}(g-g^*-x_1^{mt-1})$.

\nd {\bf 2}$^\circ $ 
$g^*$ lies in $f^lx_2k[f^lx_2,x_1]$. 
If $p\mid t$, 
then $g^*$ lies in $(f^lx_2)^px_1k[(f^lx_2)^p,x_1]$. 
If $t$ is a power of $p$, 
then $g^*=0$.

Now, we set $C:=k[f^{\pm 1},g^{\pm 1}]$. 
Here, 
$h^{\pm 1}$ stands for $h,h^{-1}$ for $h\in \kx \sm \zs $. 
Then, from 1$^\circ $ and 2$^\circ $, 
we see that $\kx \subset C[r]$. 
This implies that

\nd {\bf 3}$^\circ $ 
$f$, $g$ and $r$ are algebraically independent over $k$.

\nd 
Hence, $C[r]$ is the polynomial ring in $r$ over $C$. 
Therefore, 
by Example~\ref{example:action on A[x]}, 
$$
\widetilde{\ep }:C[r]\ni u(r)\mapsto u(r+f^lgT)\in C[r][T]
$$
is a $\Ga $-action 
on $C[r]$ with $C[r]^{\widetilde{\ep }}=C$. 
Moreover, 
we have $\widetilde{\ep }(\kx )\subset \kx [T]$ 
by Proposition~\ref{prop:restricts} (i) below. 
Hence, 
$\widetilde{\ep }$ restricts to a $\Ga $-action 
on $\kx $, 
which we denote by $\ep $. 
As shown in \S \ref{sect:intersection lemma}, 
$\ep $ is of rank three 
and $\kx ^{\ep }=C\cap \kx =k[f,g]$. 
When $p=0$, $t=2$ and $m=2l+1$, 
this $\Ga $-action 
is the same as Freudenburg~\cite{Frank}.

\begin{prop}\label{prop:restricts}
Set $\delta :=\widetilde{\ep }-\iota $ {\rm (cf.~\S \ref{sect:plinth})}. 
Then, 
the following assertions hold.

\nd {\rm (i)} 
For the ideal $J:=(x_1,x_2^{lt}x_2,x_2^{lt}x_3)$ of $\kx [T]$, 
we have $\delta (\kx )\subset TJ$. 

\nd {\rm (ii)} 
If $p>0$ and $p\mid t$, 
then we have $\delta (\kx )\subset gT\kx [T]$. 
\end{prop}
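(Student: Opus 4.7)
The strategy is to compute $\delta(x_i)=\widetilde{\ep}(x_i)-x_i$ for $i=1,2,3$ directly, by applying $\widetilde{\ep}$ to the three inversion formulas in $1^{\circ}$ and using that $\widetilde{\ep}$ fixes $f,g\in C$ while $\widetilde{\ep}(r)=r+f^lgT$. From $x_1g=f^{lt+1}+r^t$, the binomial expansion of $(r+f^lgT)^t$ yields
\[
\delta(x_1)=\sum_{i=1}^{t}\binom{t}{i}r^{t-i}f^{li}g^{i-1}T^{i}\in Tf^{l}\kx[T].
\]
From $f^lx_2=r-x_1^m$, applying $\widetilde{\ep}$ and solving gives
\[
\delta(x_2)=gT-f^{-l}\sum_{j=1}^{m}\binom{m}{j}x_1^{m-j}\,\delta(x_1)^{j},
\]
where the $f^{-l}$ is absorbed by the $f^{lj}$-divisibility of $\delta(x_1)^j$. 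Finally, from $f^{lt}x_3=g-g^{*}-x_1^{mt-1}$ together with the explicit form of $g^{*}\in\kx[f^lx_2,x_1]$ from $2^{\circ}$, I obtain
\[
f^{lt}\,\delta(x_3)=-\bigl(\widetilde{\ep}(g^{*})-g^{*}\bigr)-\sum_{j=1}^{mt-1}\binom{mt-1}{j}x_1^{mt-1-j}\,\delta(x_1)^{j},
\]
and verify that the right-hand side is divisible by $f^{lt}$ in $\kx[T]$; this also confirms $\widetilde{\ep}(x_3)\in\kx[T]$.

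For \textbf{part (i)}, given these explicit expansions, I check term by term that the coefficient of each $T^{i}$ appearing in $\delta(x_j)$ lies in $J$. The essential identity is $f\equiv -x_2^{t}\pmod{x_1}$, so every power $f^{la}$ reduces modulo $x_1$ to a pure power of $x_2$; multiplying by $x_2$ or $x_3$ lands inside the last two generators of $J$. A direct reduction shows $f^{l}r^{t-1}\in J$, $g\in J$, and $g^{*}\in J$, which handle $\delta(x_1)$ and $\delta(x_2)$; the analogous analysis of $f^{lt}\delta(x_3)$ then gives $\delta(x_3)\in TJ$. For \textbf{part (ii)}, when $p\mid t$ the $i=1$ summand of $\delta(x_1)$ vanishes because $\binom{t}{1}=t\equiv 0$, while each surviving summand carries $g^{i-1}$ with $i\geq 2$, giving $\delta(x_1)\in gT\kx[T]$. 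Substituting into the formulas above, the leading $gT$ in $\delta(x_2)$ is manifestly divisible by $g$ and each $\delta(x_1)^{j}$-contribution inherits $g^{j}$-divisibility; the same argument produces $f^{lt}\delta(x_3)\in gT\kx[T]$, and dividing through by $f^{lt}$ preserves $g$-divisibility because $f$ and $g$ are coprime in the UFD $\kx$ (they are algebraically independent, so $V(f,g)$ has pure codimension two and $\gcd(f,g)$ is a unit).

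The main obstacle is Step 3. When $t$ is not a power of $p$, the polynomial $g^{*}$ has several nontrivial monomials, so $\widetilde{\ep}(g^{*})-g^{*}$ does not telescope and must be computed by iteratively substituting $\widetilde{\ep}(f^{l}x_2)=f^{l}x_2+f^{l}\delta(x_2)$ and $\widetilde{\ep}(x_1)=x_1+\delta(x_1)$ into each summand of $g^{*}$, binomially expanding, and then collecting. The delicate bookkeeping is required both to confirm divisibility by $f^{lt}$ (equivalently, that $\widetilde{\ep}$ restricts to $\kx$) and to propagate the $TJ$-membership in (i), respectively the $g$-divisibility in (ii). Nevertheless, the $f^{l}$- and $g^{i-1}$-factors tracked from Step 1 survive all substitutions, which is what ultimately makes both assertions go through.
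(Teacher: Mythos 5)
Your computations of $\delta(x_1)$ and $\delta(x_2)$ are correct and essentially coincide with the paper's (8$^\circ$ and (\ref{eq:delta(x_2)})), and your treatment of part (ii) for these two generators is fine. The gap is in $\delta(x_3)$, and it is exactly the step the paper has to work hardest for. First, the claim that one can ``verify that the right-hand side is divisible by $f^{lt}$'' by tracking $f^l$-factors through the substitutions is not tenable as stated: writing $g^*=\sum_{i=1}^{t-1}\binom{t}{i}f^{l(t-i)}x_2^{t-i}x_1^{mi-1}$, the summand with $i=t-1$ contributes only $f^{l}\,\delta(x_2x_1^{m(t-1)-1})$, and $\delta(x_2)=gT-f^{-l}\delta(x_1^m)$ carries no positive power of $f$ at all; so the individual terms of $-\delta(g^*)-\delta(x_1^{mt-1})$ are nowhere near divisible by $f^{lt}$, and the divisibility arises only from a global cancellation. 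The paper sidesteps this by never expanding $\delta(g^*)$: it uses $0=\delta(f)=\delta(x_1)x_3+\widetilde{\ep}(x_1)\delta(x_3)-\delta(x_2^t)$ to get $\widetilde{\ep}(x_1)\delta(x_3)\in\kx[T]$, combines this with $\delta(x_3)\in f^{-lt}\kx[T]$, and concludes $\delta(x_3)\in\kx[T]$ from $\gcd(f,\widetilde{\ep}(x_1))=1$. You would need an argument of this kind; termwise bookkeeping does not deliver it.

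Second, even granting $\delta(x_3)\in\kx[T]$, the assertion that ``the analogous analysis of $f^{lt}\delta(x_3)$ then gives $\delta(x_3)\in TJ$'' is a non sequitur: knowing that $f^{lt}\delta(x_3)$ lies in $TJ$ (or any power of $J$) says nothing about the quotient, since $f^{lt}$ itself lies in $J$ (e.g.\ $f^{lt}\cdot 1\in J$ while $1\notin J$), so one cannot ``divide through.'' This is why the paper proves $\delta(x_3)\in J$ by a separate contradiction argument: assuming a monomial $x_2^{i_2}x_3^{i_3}T^j$ of $\delta(x_3)$ lies outside $J$ with $i_2+i_3$ minimal, and comparing coefficients in $\delta(x_2^t)-\delta(x_1)x_3=x_1\delta(x_3)+\delta(x_1)\delta(x_3)$, using that $J^2$ is a monomial ideal and the refined memberships $\delta(x_1)\in f^lJ^{t-1}$, $\delta(x_1)\fn\subset J^2$, $\delta(x_2^t)\in J^2$. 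Some substitute for this argument is needed; it is the mathematical core of part (i). Your coprimality argument for $g$-divisibility in part (ii) is correct, but it presupposes $\delta(x_3)\in\kx[T]$, i.e.\ the very step above.
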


Note that 
$\delta (\kx )\subset 
\sum _{i=1}^3\delta (x_i)\kx [\delta (x_1),\delta (x_2),\delta (x_3)]
\cap TC[r][T]$ 
by \S \ref{sect:plinth} (5) and (6). 
Hence, 
Proposition~\ref{prop:restricts} follows 
from (i) and (ii) of the following lemma.

\begin{lem}\label{lem:rank3 J}
{\rm (i)} 
$\delta (x_i)\in J$ holds for $i=1,2,3$.

\nd {\rm (ii)} 
If $p>0$ and $p\mid t$, 
then $\delta (x_i)\in g\kx [T]$ holds for $i=1,2,3$.

\nd {\rm (iii)} 
If $p>0$, 
and $m$ and $t$ are powers of $p$, 
then 
we have $\delta (x_1)=g^{-1}(f^lgT)^t$, 
$\delta (x_2)=gT-f^{-l}\delta (x_1)^m$, 
and $\delta (x_3)=-f^{-lt}\delta (x_1^{mt-1})$.

\end{lem}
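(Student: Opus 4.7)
The plan is to apply $\widetilde\ep$ to the three defining relations
\[
x_1g=f^{lt+1}+r^t,\qquad f^lx_2=r-x_1^m,\qquad f^{lt}x_3=g-g^*-x_1^{mt-1},
\]
and to exploit $\widetilde\ep(f)=f$, $\widetilde\ep(g)=g$, $\widetilde\ep(r)=r+f^lgT$.  Subtracting originals from images yields, in any characteristic,
\[
\delta(x_1)=\sum_{i=1}^t\binom{t}{i}r^{t-i}f^{li}g^{i-1}T^i,\quad
\delta(x_2)=gT-f^{-l}\delta(x_1^m),\quad
f^{lt}\delta(x_3)=-\delta(g^*)-\delta(x_1^{mt-1}),
\]
and the $f^{-l}$ is harmless because $f^l$ divides every summand of $\delta(x_1)$.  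Part (iii) is immediate:  when $m$ and $t$ are powers of $p$, Frobenius collapses $(r+f^lgT)^t$, $\widetilde\ep(x_1)^m$, and $(f^lx_2+x_1^m)^t$ to $r^t+(f^lgT)^t$, $x_1^m+\delta(x_1)^m$, and $(f^lx_2)^t+x_1^{mt}$ respectively (the last yielding $g^*=0$), which gives the stated closed forms; then $\delta(x_1)=f^{lt}g^{t-1}T^t$ makes $f^{lt}$ divide every summand of $\delta(x_1^{mt-1})$ visibly, so $\delta(x_3)\in\kx[T]$.

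For (i), the pivotal preliminary is $r,g\in J$.  Reducing modulo $x_1$ gives $f\equiv-x_2^t$, hence $r\equiv(-1)^lx_2^{lt+1}\in(x_2^{lt+1})$.  Inspecting $g^*=\sum_{j=1}^{t-1}\binom{t}{j}f^{l(t-j)}x_2^{t-j}x_1^{mj-1}$ shows $g^*\in(x_1,x_2^{lt+1})$:  every summand with $(m,j)\ne(1,1)$ carries an $x_1$-factor, while the lone $(m,j)=(1,1)$ summand reduces modulo $x_1$ to $\pm tx_2^{lt(t-1)+t-1}$, whose exponent is $\ge lt+1$ since $(t-2)(lt+1)\ge0$ for $t\ge2$; combined with $f^{lt}x_3\equiv(-1)^{lt}x_2^{lt^2}x_3\in(x_2^{lt}x_3)\pmod{x_1}$ this yields $g\in J$.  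Term-by-term inspection of the two explicit formulas then gives $\delta(x_1),\delta(x_2)\in J\kx[T]$ in every characteristic.  For $\delta(x_3)$ I would first establish $\delta(x_3)\in\kx[T]$:  reducing the first two defining relations modulo $\widetilde\ep(x_1)$ combines to $f^{lt}(f+\widetilde\ep(x_2)^t)\equiv0$, and since $\widetilde\ep(x_1)\equiv x_1\pmod f$ with $x_1\not\equiv0$ in the domain $\kx[T]/(f)$ we have $\gcd(f,\widetilde\ep(x_1))=1$ in the UFD $\kx[T]$; cancelling $f^{lt}$ gives $\widetilde\ep(x_1)\mid f+\widetilde\ep(x_2)^t$, hence $\widetilde\ep(x_3)=(f+\widetilde\ep(x_2)^t)/\widetilde\ep(x_1)\in\kx[T]$ via $\widetilde\ep(x_1)\widetilde\ep(x_3)=f+\widetilde\ep(x_2)^t$.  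The $J$-membership of $\delta(x_3)$ then follows from a careful expansion of
\[
f^{lt}\delta(x_3)=g^*+x_1^{mt-1}+\sum_{i=1}^t\binom{t}{i}(-1)^i(r+f^lgT)^{t-i}\widetilde\ep(x_1)^{mi-1},
\]
verifying that each summand lies in $f^{lt}J\kx[T]$.

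For (ii), $p\mid t$ kills the $i=1$ summand $tr^{t-1}f^lT$ of $\delta(x_1)$, while every other summand carries $g^{i-1}$ with $i\ge2$; so $g\mid\delta(x_1)$.  Tracking the common $f^l$-factor gives $g\mid f^{-l}\delta(x_1^m)$, hence $g\mid\delta(x_2)$.  The identity $\widetilde\ep(x_1)\delta(x_3)=\delta(x_2^t)-x_3\delta(x_1)$ then yields $g\mid\widetilde\ep(x_1)\delta(x_3)$, and since $\widetilde\ep(x_1)\equiv x_1\pmod g$ with $\gcd(x_1,g)=1$ in the UFD $\kx$ (because $g\bmod x_1$ has the nonzero term $\pm x_2^{lt^2}x_3$), we conclude $g\mid\delta(x_3)$.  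The main obstacle is the $J$-membership of $\delta(x_3)$ in (i):  the naive consequence $f^{lt}\delta(x_3)\in J\kx[T]$ is trivially true because $f^{lt}\in J$, so it conveys no information, and upgrading to the sharper $f^{lt}J\kx[T]$-membership demands combinatorial bookkeeping of the binomial expansion that has no formal analogue in $C[r][T]$, where $f$ is a unit.
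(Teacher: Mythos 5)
Your treatment of $\delta(x_1)$ and $\delta(x_2)$, of the preliminary facts $r,g\in J$, and of parts (ii) and (iii) is essentially sound and close to the paper's own route (the paper likewise derives $\delta(x_1)=g^{-1}\delta(r^t)$, $\delta(x_2)=gT-f^{-l}\delta(x_1^m)$, and $\delta(x_3)=-f^{-lt}\delta(g^*+x_1^{mt-1})$ from the three defining relations, and your argument that $\widetilde\ep(x_1)\mid f+\widetilde\ep(x_2)^t$ via $\gcd(f,\widetilde\ep(x_1))=\gcd(f,x_1)=1$ is the same device the paper uses to get $\delta(x_3)\in\kx[T]$). But there is a genuine gap exactly where you yourself flag ``the main obstacle'': the claim $\delta(x_3)\in J$ is never proved. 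You assert that it ``follows from a careful expansion \ldots verifying that each summand lies in $f^{lt}J\kx[T]$,'' and then concede that this verification ``demands combinatorial bookkeeping'' you have not carried out; as you correctly observe, the only thing that comes for free is $f^{lt}\delta(x_3)\in J\kx[T]$, which is vacuous. Since $\delta(x_3)\in J$ is the hardest and most essential assertion of part (i) --- it is what makes Proposition 4.1(i) and hence the whole restriction of $\widetilde\ep$ to $\kx$ quantitatively useful --- the proof is incomplete as it stands.

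The paper does \emph{not} attack this by expanding $f^{lt}\delta(x_3)$ term by term. Instead it argues indirectly with monomials: having established $\delta(x_1)\fn\subset J^2$ and $\delta(x_2^t)\in J^2$ (your 9$^\circ$ and 10$^\circ$ analogues), it supposes $\delta(x_3)\notin J$, picks a monomial $h=x_2^{i_2}x_3^{i_3}T^j$ of $\delta(x_3)$ not in $J$ with $i_2+i_3$ minimal, and examines the coefficient of $x_1h$ in the identity $\delta(x_2^t)-\delta(x_1)x_3=x_1\delta(x_3)+\delta(x_1)\delta(x_3)$. The left side lies in the \emph{monomial} ideal $J^2$, so $x_1h$ cannot appear there; it does appear in $x_1\delta(x_3)$, hence must appear in $\delta(x_1)\delta(x_3)$, which forces $i_2+i_3\ge 1$ (because $\delta(x_1)\delta(x_3)\in(f)\subset\fn^2$) and simultaneously forces a monomial $T^{j'}$ to appear in $\delta(x_3)$ (else $\delta(x_1)\delta(x_3)\in J^2$), contradicting minimality. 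If you want to salvage your write-up, you should either reproduce an argument of this kind or actually carry out the summand-by-summand estimate you sketched; as written, the latter is a promissory note, not a proof.
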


The rest of \S \ref{sect:rank3 construction} 
is devoted to the proof of this lemma. 
First, note the following: 

\nd {\bf 4}$^\circ $ 
Since $g\in x_1^{mt-1}+f\kx $ by (\ref{eq:rank3 g}) and 2$^\circ $, 
we have $\gcd (f,g)=\gcd (f,x_1^{mt-1})=1$.

\nd {\bf 5}$^\circ $ 
Since $f\in (x_1,x_2^t)$, 
we have $f^lx_i\in (x_1,x_2^{lt}x_i)\subset J$ 
for $i=1,2,3$. 
Hence, $f^l\fn \subset J$ 
holds for $\fn :=(x_1,x_2,x_3)$.

\nd {\bf 6}$^\circ $ 
$g^*$ is in $(f^lx_2{\cdot }x_1)$ if $m\ge 2$, 
and in $((f^lx_2)^2,f^lx_2{\cdot }x_1)$ if $m=1$ and $t\ge 3$. 
Hence, 
$g^*$ lies in $J^2$ by 5$^\circ $. 
Thus, 
we have $g\in (f^lf^{l}x_3,g^*,x_1^2)\subset f^lJ+J^2$.

\nd {\bf 7}$^\circ $ 
$r\in (f^lx_2,x_1)\subset J$ by 5$^\circ $, 
and so 
$\widetilde{\ep }(r)=r+f^lgT\in (r,g)\subset J$ 
by 6$^\circ $. 
Hence, we know 
by \S \ref{sect:plinth} (2) that 
$$
\delta (r^u)
=\delta (r)\sum _{i=0}^{u-1}\widetilde{\ep }(r)^ir^{u-1-i}
=f^lgT\sum _{i=0}^{u-1}\widetilde{\ep }(r)^ir^{u-1-i}\in f^lgJ^{u-1}
\text{ for all }u\ge 1. 
$$

\begin{proof}[Proof of Lemma~$\ref{lem:rank3 J}$]
(i) 
$\delta $ is a 
linear map over $\ker \delta =C[r]^{\widetilde{\ep }}=C$ 
(cf.~\S \ref{sect:plinth}). 
Hence,

\nd {\bf 8}$^\circ $ 
we have 
$\delta (x_1)=\delta (g^{-1}(f^{lt+1}+r^t))=
g^{-1}\delta (r^t)\in f^lJ^{t-1}$ 
by 1$^\circ $ and 7$^{\circ }$.

\nd 
Since $t\ge 2$, 
this proves $\delta (x_1)\in J$. 
Hence, 
$\widetilde{\ep }(x_1)=x_1+\delta (x_1)$ is in $J$. 
Moreover,

\nd {\bf 9}$^\circ $ 
$\delta (x_1)\fn \subset f^l\fn \cdot J^{t-1}\subset J^2$ 
holds by 5$^\circ $ and 8$^\circ $.

Similarly, 
by 1$^\circ $ and \S \ref{sect:plinth} (2), 
we have 
\begin{equation}\label{eq:delta(x_2)}
\delta (x_2)
=f^{-l}(\delta (r)-\delta (x_1^m))
=gT-f^{-l}\delta (x_1)\sum _{i=0}^{m-1}\widetilde{\ep }(x_1)^ix_1^{m-1-i},
\end{equation}
in which $g\in f^lJ+J^2$ by 6$^\circ $, 
$f^{-l}\delta (x_1)\in J^{t-1}$ by 8$^\circ $, 
and $\widetilde{\ep }(x_1),x_1\in J$. 
Hence, 
$\delta (x_2)$ is in $f^lJ+J^2+J^{t+m-2}$. 
Since $mt\ge 3$, 
we have $t+m\ge 4$. 
Thus, 
we get $\delta (x_2)\in f^lJ+J^2\subset J$. 
This implies 
$\widetilde{\ep }(x_2)=x_2+\delta (x_2)\in \fn $, 
and so

\nd {\bf 10}$^\circ $ 
$\delta (x_2^t)=\delta (x_2)\sum _{i=0}^{t-1}\widetilde{\ep }(x_2)^ix_2^{t-1-i}
\in (f^lJ+J^2)\mathfrak{n}\subset J^2$ 
by 5$^\circ $, 
since $t\ge 2$.

For $\delta (x_3)$, 
first note that 
$\delta (k[x_1,x_2,f])\subset 
\sum _{i=1}^2\delta (x_i)k[x_1,x_2,f,\delta (x_1),\delta (x_2)]$ 
by \S \ref{sect:plinth} (5), 
since $\delta (f)=0$. 
By 2$^\circ $, 
$g^*+x_1^{mt-1}$ is in $k[x_1,x_2,f]$. 
Hence, 
we get

\nd {\bf 11}$^\circ $ 
$\delta (x_3)=-f^{-lt}\delta (g^*+x_1^{mt-1})
\in f^{-lt}\sum _{i=1}^2\delta (x_i)k[x_1,x_2,f,\delta (x_1),\delta (x_2)]$ 
by 1$^\circ $. 

\nd 
We have already proved that 
$\delta (x_1),\delta (x_2)\in J\subset \kx [T]$. 
Thus, 
$\delta (x_3)$ belongs to $f^{-lt}\kx [T]$ by 11$^\circ $. 
Since 
$0=\delta (f)
=\delta (x_1)x_3+\widetilde{\ep }(x_1)\delta (x_3)-\delta (x_2^t)$ 
by \S \ref{sect:plinth} (4), 
we have 
$\widetilde{\ep }(x_1)\delta (x_3)
=\delta (x_2^t)-\delta (x_1)x_3\in \kx [T]$. 
This implies $\delta (x_3)\in \kx [T]$, 
since $\widetilde{\ep }(x_1)=x_1+\delta (x_1)\in x_1+fJ^{t-1}$ by 8$^\circ $, 
and hence $\gcd (f,\widetilde{\ep }(x_1))=\gcd (f,x_1)=1$.

Suppose that $\delta (x_3)\not\in J$. 
Then, 
there appears in $\delta (x_3)$ a monomial $h=x_2^{i_2}x_3^{i_3}T^j$ 
not in $J$. 
For such $h$, 
we have $x_1h\not\in J^2$. 
Since $J^2$ is a monomial ideal, 
this means that the monomial 
$x_1h$ does not appear in any polynomial belonging to $J^2$. 
We choose $h$ so that $i_2+i_3$ is minimal. 
Now, observe that 
\begin{equation}\label{eq:lem J delta(x_3)}
\delta (x_2^t)-\delta (x_1)x_3
=\widetilde{\ep }(x_1)\delta (x_3)=x_1\delta (x_3)+\delta (x_1)\delta (x_3). 
\end{equation}
By 9$^\circ $ and 10$^\circ $, 
$\delta (x_2^t)-\delta (x_1)x_3$ lies in $J^2$. 
Hence, 
$x_1h$ does not appear in (\ref{eq:lem J delta(x_3)}). 
Clearly, 
$x_1h$ appears in $x_1\delta (x_3)$. 
Thus, 
$x_1h$ must appear in $\delta (x_1)\delta (x_3)$. 
This implies $i_2+i_3\ge 1$, 
since $\delta (x_1)\delta (x_3)\in (f)\subset \fn ^2$ by 8$^\circ $. 
Moreover, $\delta (x_3)$ is not in $\fn $, 
for otherwise $\delta (x_1)\delta (x_3)\in J^2$ by 9$^\circ $. 
Hence, 
the monomial $T^{j'}$ 
appears in $\delta (x_3)$ for some $j'\ge 0$. 
This contradicts the minimality of $i_2+i_3$.

(ii) 
We have $\delta (x_1)=g^{-1}\delta (r^t)=g^{-1}\delta (r^{t/p})^p
\in g^{-1}(f^lg)^p\kx [T]\subset f^lg\kx [T]$ 
by 8$^\circ $, \S \ref{sect:plinth} (3) and 7$^\circ $. 
This implies $\delta (x_1)\in g\kx [T]$, 
and also $\delta (x_2)\in g\kx [T]$ by (\ref{eq:delta(x_2)}). 
Then, 11$^\circ $ yields that 
$\delta (x_3)\in f^{-lt}g\kx [T]$. 
Since $\delta (x_3)\in \kx [T]$ by (i), 
and $\gcd (f,g)=1$  by 4$^\circ $, 
it follows that $\delta (x_3)\in g\kx [T]$.

(iii) 
For $\delta (x_1)$ and $\delta (x_2)$, 
apply \S \ref{sect:plinth} (3) to 
8$^\circ $ and (\ref{eq:delta(x_2)}). 
Since $g^*=0$ by 2$^\circ $, 
we have 
$\delta (x_3)=-f^{-lt}\delta (g^*+x_1^{mt-1})
=-f^{-lt}\delta (x_1^{mt-1})$ by 1$^\circ $. 
\end{proof}

\subsection{Plinth ideals}\label{sect:rank3 exp}

Assume that $p>0$. 
Let $\ep $ be the $\Ga $-action on $\kx $ 
as in \S \ref{sect:rank3 construction}. 
For $h\in \kx ^{\ep }\sm \zs $, 
we define $\ep _h\in \Ex _3(k)$, 
and a $\kx ^{\ep _h}$-linear map 
$\delta _h:=\ep _h-\id $. 
In \S \ref{sect:rank3 exp}, 
we study the plinth ideal 
$\pl (\ep _h)=\delta _h(\kx )\cap \kx ^{\ep _h}$. 
Our goal is to prove the following theorem.

\begin{thm}\label{thm:plinth rank 3}
\nd{\rm (i)} 
If $p\nmid t$, 
then $\pl (\ep _h)$ is not a principal ideal of $\kx ^{\ep _h}$.

\nd{\rm (ii)} 
If $p\mid t$, 
then we have $\pl (\ep _h)=gh\kx ^{\ep _h}$. 
\end{thm}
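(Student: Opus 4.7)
The plan is to treat both parts by combining the containments from Proposition~\ref{prop:restricts} (specialized at $T=h$) with Lemma~\ref{lem:pl principal}. Throughout I use that $gh \in \kx^{\ep } \subset \kx^{\ep _h}$ is a nonzero element of the UFD $\kx$, and that $\delta _h := \ep _h - \id $ is $\kx^{\ep _h}$-linear.

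For part~(ii), since $p\mid t$, Proposition~\ref{prop:restricts}(ii) specialized at $T=h$ yields $\delta _h(\kx) \subset gh\kx$. A standard non-zero-divisor argument---if $b\in\kx$ and $ghb \in \kx^{\ep _h}$, then $gh\,\ep _h(b) = \ep _h(ghb) = ghb$, so $b \in \kx^{\ep _h}$---gives $gh\kx \cap \kx^{\ep _h} = gh\kx^{\ep _h}$, whence $\pl(\ep _h) \subset gh\kx^{\ep _h}$. For the reverse inclusion, I plan to apply Lemma~\ref{lem:pl principal}(i) with $a = gh$, reducing the task to exhibiting $u \in \kx$ with $\delta _h(u) = gh$. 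The natural candidate is $u = x_2$: using equation~(\ref{eq:delta(x_2)}) together with the refinement $\delta (x_1) \in f^{lp}g^{p-1}T^p\kx[T]$ (implicit in the proof of Lemma~\ref{lem:rank3 J}(ii), where the hypothesis $p\mid t$ is used), one gets $\delta _h(x_2) = gh + E$ with $E \in f^{l(p-1)}g^{p-1}h^p\kx$. It then remains to express $E = \delta _h(v)$ for some $v \in \kx$, so that $u = x_2 - v$ does the job.

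For part~(i), I would argue by contradiction. Assume $\pl(\ep _h) = a\kx^{\ep _h}$ is principal. Since $\delta _h(r) = f^l g h \in \pl(\ep _h)$, $a$ divides $f^l g h$ in $\kx^{\ep _h}$. The strategy is to exhibit a second plinth element $b$ built from $\delta _h(r^t) = g\,\delta _h(x_1)$ (using the identity $r^t = g x_1 - f^{lt+1}$), exploiting that when $p\nmid t$ the $T$-linear coefficient $t f^l g r^{t-1}$ of $\delta (r^t)$ is nonzero. Arranging $b$ so that $\gcd(f^l g h, b)$, computed in the UFD $\kx$, is a proper divisor of $f^l g h$ that lies in $\kx^{\ep _h}$ but is not in $\delta _h(\kx)$ would, by Lemma~\ref{lem:pl principal}(ii), force that gcd into $\pl(\ep _h)$---a contradiction.

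The main obstacle in both parts is pinning down membership in $\delta _h(\kx)$ beyond the coarse containments of Proposition~\ref{prop:restricts}: in (ii), constructing the correcting element $v$ so that $gh$ itself (not merely its $f^l$-multiple) lies in $\delta _h(\kx)$; in (i), selecting the right second plinth element and verifying, via a leading-term analysis in the $h$-filtration, that the resulting gcd fails to lie in $\delta _h(\kx)$.
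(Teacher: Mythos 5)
Your skeleton is right on both counts (Lemma~\ref{lem:pl principal} plus the containments of Proposition~\ref{prop:restricts} specialized at $T=h$, a gcd argument for (i), a non-zero-divisor argument for (ii)), but in each part the proposal stops exactly at the step that carries the real content. For (ii), reducing to ``exhibit $u\in\kx$ with $\delta _h(u)=gh$'' is correct, but your route --- write $\delta _h(x_2)=gh+E$ and then find $v$ with $\delta _h(v)=E$ --- is not viable as stated: there is no a priori reason the error term $E$ lies in $\delta _h(\kx )$, and you give no mechanism for producing such a $v$. The paper's actual device is different: it first builds an \emph{invariant} $q_1=g^{-1}(f^{lt+1}+q^{t/p})$ (using the Artin--Schreier element $q=r^p-(f^lgh)^{p-1}r$ and the hypothesis $p\mid t$ to take the $t/p$-th power), checks $q_1\equiv x_1\pmod{f^l\kx }$, and then sets $s:=r-q_1^m\in f^l\kx $; since $q_1\in\ker \delta _h$, one gets $f^l\delta _h(f^{-l}s)=\delta _h(r)=f^lgh$ on the nose, so $u=f^{-l}s$ works. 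That construction of an invariant congruent to $x_1$ modulo $f^l$ is the missing idea.

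For (i), the structure (two plinth elements, take their gcd, invoke Lemma~\ref{lem:pl principal}(ii), contradict a containment of $\pl (\ep _h)$) matches the paper, but your second plinth element is not pinned down. An element of $\pl (\ep _h)$ must lie in $\delta _h(\kx )\cap \kx ^{\ep _h}$, and ``the $T$-linear coefficient $tf^lgr^{t-1}$ of $\delta (r^t)$'' is not invariant (it involves $r$), so it cannot serve directly. The paper's construction needs the Euler/Frobenius trick: with $p^v\equiv 1\pmod t$ and $u=(p^v-1)/t$, the element $s:=r^{p^v}-(-f^{lt+1})^ur$ is divisible by $r^t+f^{lt+1}=x_1g$, hence by $g$, and $\delta _h(g^{-1}s)=g^{p^v-1}(f^lh)^{p^v}-(-f^{lt+1})^uf^lh\in k[f,g,h]$ is invariant; subtracting the multiple of $f^lgh$ yields $f^{l'}h\in\pl (\ep _h)$ with $l'\ge l$. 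Then $\gcd (f^{l'}h,f^lgh)=f^lh$, and the contradiction is not obtained by a leading-term analysis in an $h$-filtration but by the concrete monomial-ideal containment $\pl (\ep _h)\subset hI$ with $I=(x_1,x_2^{lt+1},x_2^{lt}x_3)$ from Proposition~\ref{prop:restricts}(i): $f^lh\in hI$ would force $x_2^{lt}\in I$, which is false. Both of these constructions (the Frobenius-twisted plinth element in (i), the invariant $q_1$ in (ii)) are the heart of the proof and are absent from the proposal.
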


By definition, 
we have $\ep _h(r)=\ep (r)|_{T=h}=r+f^lgh$, 
and $\delta _h(r)=\ep _h(r)-r=f^lgh$. 
Since $f,g,h\in \kx ^{\ep }\subset \kx ^{\ep _h}$, 
it follows that 

\nd {\bf 12}$^\circ $  
$f^lgh$ belongs to $\pl (\ep _h)$. 
Consequently, 
we have $f^lgh\kx ^{\ep _h}\subset \pl (\ep _h)$. 

\begin{lem}\label{lem:rank3 pl}
If $p\nmid t$, 
then 
$f^{l'}h$ belongs to $\pl (\ep _h)$ for some $l'\ge l$. 
\end{lem}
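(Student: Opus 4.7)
The plan is to construct an explicit $v \in \kx$ such that $\delta_h(v) \in \kx^{\ep_h}$ (hence $\delta_h(v) \in \pl(\ep _h)$) and takes the form $c f^{l'} h + w$ for some $c \in \bF_p^{\times}$, $l' \geq l$, and $w \in a\kx^{\ep _h}$. Since $a = f^l g h \in \pl(\ep _h)$ by 12$^{\circ }$, the ideal $a\kx^{\ep _h}$ is contained in $\pl(\ep _h)$, so subtracting $w$ from $\delta_h(v)$ isolates $c f^{l'} h \in \pl(\ep _h)$, yielding $f^{l'} h \in \pl(\ep _h)$ after scaling by $c^{-1}$.

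The natural first candidate is $v = x_1 r$. Applying the Leibniz rule together with $g\delta _h(x_1) = \delta _h(r^t) = (r+a)^t - r^t$ and substituting $r^t = g x_1 - f^{lt+1}$, one obtains, after combining the resulting binomial coefficients,
\[
\delta_h(x_1 r) = (t+1)\,a x_1 \;-\; t f^{l(t+1)+1} h \;+\; f^l h \sum_{j=1}^{t} \binom{t+1}{j+1} r^{t-j} a^j.
\]
In the lucky case $t+1 = p$, the factor $(t+1)$ and all binomials $\binom{p}{j+1}$ for $1 \le j \le t-1$ vanish in $\bF_p$; the $x_1$- and $r$-dependent pieces disappear, and the remaining $-tf^{l(t+1)+1}h + f^l h a^t$ lies in $\kx^{\ep _h}$. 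Subtracting the $a$-multiple $f^l h a^t \in a\kx^{\ep _h}$ gives $-t f^{l(t+1)+1} h \in \pl(\ep _h)$; since $-t \in \bF_p^{\times }$ (using $p\nmid t$), this yields $f^{l'} h \in \pl(\ep _h)$ with $l' = l(t+1)+1$.

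For general $t$ coprime to $p$, the non-invariant terms in $\delta_h(x_1 r)$ no longer vanish automatically, and $v$ must be refined. The idea is to cancel them by adding $\kx^{\ep _h}$-multiples of $\delta _h(r^k)$ (whose leading $r$-term has coefficient $ka$, invertible whenever $p\nmid k$) and $\delta_h$-images of other monomials in $\{r,x_1\}$, solving a triangular system that eliminates the offending $r^j a^i$ and $ax_1$ contributions one at a time. Equivalently, one may apply the trace operator $\delta_h^{p-1}$ (which automatically maps into $\pl(\ep _h)$ since $\delta_h^p = 0$) to a carefully chosen polynomial in $x_1$ and $r$, exploiting the identity $\sum _{j=0}^{p-1} j^i \equiv -1 \pmod p$ iff $(p-1)\mid i \ge 1$ to control which monomials survive.

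The main obstacle is maintaining $h$-degree equal to $1$ throughout: each application of $\delta_h$ introduces a factor of $a = f^l g h$, contributing one power of $h$, so naive iterates such as $\delta_h^{p-1}(x_1)$ would yield invariants of $h$-degree $p-1$. The construction must exploit the relation $r^t = g x_1 - f^{lt+1}$ together with the hypothesis $p\nmid t$ to guarantee that the surviving lowest-$h$-degree term in the final expression is exactly $f^{l'} h$ rather than $f^{l'} h^j$ for some $j \ge 2$. The condition $p\nmid t$ is precisely what ensures that the leading coefficient $t$ in $\delta_h(x_1) = tf^lhr^{t-1} + \cdots$ is invertible in $\bF_p$, enabling the triangular cancellation to be performed with the $h$-linear part preserved.
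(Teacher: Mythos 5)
Your computation of $\delta _h(x_1r)$ is correct, and the special case $t=p-1$ is a complete and valid proof of the lemma for that value of $t$ (note that the vanishing of all $\binom{t+1}{j+1}$ really does require $t+1=p$ exactly, not merely $p\mid t+1$). But the general case --- which is the actual content of the lemma --- is only a strategy, not a proof, and the strategy has unresolved obstructions that you yourself flag without removing. Concretely: to eliminate the term $r^{t-j}a^j$ by subtracting a multiple of $\delta _h(a^{j-1}r^{t-j+1})=(t-j+1)r^{t-j}a^j+\cdots $ you need $p\nmid (t-j+1)$, which can fail for some $j$ even when $p\nmid t$, so the ``triangular system'' is not visibly solvable; and even granting solvability, you give no argument that the surviving constant term is a nonzero multiple of $f^{l'}h$ rather than vanishing or having $h$-degree $\ge 2$ --- you explicitly name this as ``the main obstacle'' and then do not overcome it. As it stands the proposal proves the statement only for $t=p-1$.

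The paper's proof sidesteps all of this with a different device: since $p\nmid t$, Euler's theorem gives $v$ with $p^v\equiv 1\pmod t$, and with $u:=(p^v-1)/t$ the element $s:=r^{p^v}-(-f^{lt+1})^ur=(r^{tu}-(-f^{lt+1})^u)r$ is divisible by $r^t+f^{lt+1}=x_1g$, hence lies in $g\kx $, so $g^{-1}s\in \kx $. The Frobenius identity $\delta _h(r^{p^v})=\delta _h(r)^{p^v}$ (\S 2.3 (3)) and $\kx ^{\ep _h}$-linearity then give $g\,\delta _h(g^{-1}s)=(f^lgh)^{p^v}-(-f^{lt+1})^uf^lgh$ exactly, so $\delta _h(g^{-1}s)=g^{p^v-1}(f^lh)^{p^v}-(-f^{lt+1})^uf^lh\in k[f,g,h]\subset \kx ^{\ep _h}$ lies in $\pl (\ep _h)$; subtracting the first summand (a multiple of $f^lgh\in \pl (\ep _h)$) isolates $(f^{lt+1})^uf^lh$. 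This is the same philosophy as your plan --- produce an invariant element of $\delta _h(\kx )$ and strip off a multiple of $f^lgh$ --- but the choice of $s$ makes $\delta _h(s)$ computable in closed form for every $t$ coprime to $p$, with the $h$-linear part manifestly nonzero. If you want to salvage your approach, this is the missing idea.
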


\begin{proof}
By Euler's theorem, 
$p^v\equiv 1\pmod{t}$ holds for $v:=|(\Z /t\Z )^*|$, 
since $p\nmid t$ by assumption. 
Set $u:=(p^v-1)/t$. 
Then, 
since $r^t+f^{lt+1}=x_1g$ by (\ref{eq:rank3 g}), 
we have 
$$
s:=r^{p^v}-(-f^{lt+1})^ur=(r^{tu}-(-f^{lt+1})^u)r
\in (r^t-(-f^{lt+1}))k[r,f]\subset g\kx . 
$$
Hence, 
$\delta _h(g^{-1}s)$ is in $\delta _h(\kx )$. 
On the other hand, 
by \S \ref{sect:plinth} (3), 
we have 
$g\delta _h(g^{-1}s)=\delta _h(s)
=\delta _h(r)^{p^v}-(-f^{lt+1})^u\delta _h(r)
=(f^lgh)^{p^v}-(-f^{lt+1})^uf^lgh$. 
This gives that 
\begin{align}\label{eq:(Z/tZ)^*}
\delta _h(g^{-1}s)=g^{p^v-1}(f^lh)^{p^v}-(-f^{lt+1})^uf^lh 
\in k[f,g,h]\subset \kx ^{\ep _h}. 
\end{align}
Thus, 
(\ref{eq:(Z/tZ)^*}) belongs to $\pl (\ep _h)$. 
In the right-hand side of (\ref{eq:(Z/tZ)^*}), 
$g^{p^v-1}(f^lh)^{p^v}$ belongs to $\pl (\ep _h)$ 
by 12$^\circ $. 
Therefore, 
$(f^{lt+1})^uf^lh$ belongs to $\pl (\ep _h)$. 
\end{proof}

Now, we set 
$I:=x_1\kx +x_2^{lt}x_2\kx +x_2^{lt}x_3\kx $. 
Then, by Proposition~\ref{prop:restricts},

\nd {\bf 13}$^\circ $ 
we have $\delta _h(\kx )\subset hI$. 
Moreover, if $p\mid t$, 
then $\delta _h(\kx )\subset gh\kx $.

\begin{proof}[Proof of Theorem {\rm \ref{thm:plinth rank 3} (i)}]
By Lemma~\ref{lem:rank3 pl} and 12$^\circ $, 
$f^{l'}h$ and $f^lgh$ belong to $\pl (\ep _h)$, 
where $l'\ge l$. 
Moreover, 
we have $\gcd (f^{l'}h,f^lgh)=f^lh$ by 4$^\circ $, 
and $f^lh$ is in $\kx ^{\ep _h}$. 
Hence, 
if $\pl (\ep _h)$ is principal, 
then $f^lh$ must lie in $\pl (\ep _h)$ 
by Lemma~\ref{lem:pl principal} (ii). 
Since $\pl (\ep _h)\subset \delta _h(\kx )\subset hI$ by 13$^\circ $, 
it follows that $f^lh\in hI$, 
and so $f^l\in I$. 
Since $x_1\in I$, 
this implies that $x_2^{lt}\in I$, 
a contradiction. 
\end{proof}

To prove (ii), 
we need to construct some elements of $\kx ^{\ep _h}$. 
The following remark is also used in 
Section~\ref{sect:Nagata}.

\begin{notation}\label{notation:taylor}\rm 
Let $R$ be a ring, and $P(T)\in R[T]$. 
For each $i\ge 0$, 
we define $P_i(T)\in R[T]$ by $P(T+U)=\sum _{i\ge 0}P_i(T)U^i$. 
We note that 
$P_0(T)=P(T)$ and $P_1(T)=P'(T):=dP(T)/dT$ 
regardless of the characteristic of $R$. 
\end{notation}

\begin{rem}\label{rem:q_1}\rm
Let $B$ be a domain with $\ch B=p>0$. 
Assume that 
$\phi \in \Aut B$, 
$a,b\in B^{\phi }\sm \zs $ and $c\in B$ 
satisfy $\phi (c)=c+ab$. 
Then, 
$B^{\phi }$ contains $q:=c^p-(ab)^{p-1}c$ (cf.~(\ref{eq:x^p-ax})). 
Now, let 
$S$ be a subring of $B^{\phi }$ 
and let $\xi (T)=\sum _{i\ge 0}r_iT^i\in S[T]$, 
where $r_i\in S$. 
If $\xi (c)=aw$ for some $w\in B$, 
then the following (i) and (ii) hold.

\nd (i) 
Set $\xi ^p(T):=\sum _{i\ge 0}r_i^pT^i$. 
Then, we have $\xi ^p(c^p)=\xi (c)^p=(aw)^p$ 
and $(\xi ^p)_1(c^p)
=\sum _{i\ge 0}ir_i^pc^{(i-1)p}
=\xi '(c)^p$. 
Hence, we get 
\begin{align}\label{eq:q_1}
\widetilde{q}_1:=a^{1-p}\xi ^p(q)
&=a^{1-p}\Bigl(
\xi ^p(c^p)
-(\xi ^p)_1(c^p){\cdot }(ab)^{p-1}c
+\sum _{i\ge 2}(\xi ^p)_i(c^p){\cdot }
(-(ab)^{p-1}c)^i
\Bigr)
\notag \\
&\quad 
\in aw^p-\xi '(c)^p{\cdot } 
b^{p-1}c+a^{p-1}b^{2(p-1)}c^2S[ab,c]
\subset B. 
\end{align}
Since $a,\xi ^p(q)\in B^{\phi }$, 
it follows that $\widetilde{q}_1\in B^{\phi }$.

\nd (ii) 
Assume that $\xi (T)=\xi ^*(T^p)-a\widehat{\xi }(T)$ 
for some $\xi ^*(T),\widehat{\xi }(T)\in S[T]$. 
Then, 
we have $\xi ^*(c^p)=\xi (c)+a\widehat{\xi }(c)=a(w+\widehat{\xi }(c))$. 
Hence, we get 
\begin{equation}\label{eq:q_1^*}
\begin{aligned}
q_1&:=a^{-1}\xi ^*(q)
=a^{-1}\Bigl(
\xi ^*(c^p)+\sum _{i\ge 1}(\xi ^*)_i(c^p){\cdot }(-(ab)^{p-1}c)^i
\Bigr) \\
&\quad \in w+\widehat{\xi }(c)+a^{p-2}b^{p-1}cS[ab,c]\subset B. 
\end{aligned}
\end{equation}
Thus, $q_1$ belongs to $B^{\phi }$ as in (i). 
\end{rem}

Now, 
observe that 
$\ep _h(r)=r+g{\cdot }f^lh$ and $f^{lt+1}+r^t=g{\cdot }x_1$. 
Hence, 
we can use Remark~\ref{rem:q_1} for 
$(a,b,c,S,\xi (T),w) =(g,f^lh,r,k[f],f^{lt+1}+T^t,x_1)$. 
Thus, 
\begin{equation}\label{eq:rank3 q}
q:=r^p-(f^lgh)^{p-1}r
=(f^{lp}x_2^p+x_1^{mp})-(f^lgh)^{p-1}(f^lx_2+x_1^m) 
\end{equation}
belongs to $\kx ^{\ep _h}$. 
Moreover, 
if $p\mid t$, 
then 
we can write $f^{lt+1}+T^t=\xi ^*(T^p)-g\widehat{\xi }(T)$, 
where $\xi ^*(T):=f^{lt+1}+T^{t/p}$ 
and $\widehat{\xi }(T):=0$. 
Hence, 
by (\ref{eq:q_1^*}), 
$\kx ^{\ep _h}$ contains 
\begin{equation}\label{eq:rank3 q_1}
q_1:=g^{-1}(f^{lt+1}+q^{t/p})
\in x_1+g^{p-2}(f^lh)^{p-1}rk[f,f^lgh,r].
\end{equation}

\begin{proof}[Proof of Theorem {\rm \ref{thm:plinth rank 3} (ii)}]
Since $p\mid t$ by assumption, 
we have $\delta _h(\kx )\subset gh\kx $ by 13$^\circ $. 
Hence, by Lemma~\ref{lem:pl principal} (i), 
it suffices to show that $gh$ belongs to $\pl (\ep _h)$. 
Since $r\equiv x_1^m$, $q_1\equiv x_1\pmod{f^l\kx }$, 
we have $s:=r-q_1^m\in f^l\kx $. 
Hence, 
$\delta _h(f^{-l}s)$ is in $\delta _h(\kx )$. 
On the other hand, 
we have $f^l\delta _h(f^{-l}s)=\delta _h(s)=\delta _h(r)=f^lgh$, 
since $q_1\in \kx ^{\ep _h}=\ker \delta _h$. 
Thus, 
$\delta _h(f^{-l}s)=gh$ is in $\kx ^{\ep _h}$. 
This proves 
$gh\in \pl (\ep _h)$.
\end{proof}

\section{Invariant ring}\label{sect:invariant ring}
\setcounter{equation}{0}

Let $\ep $ be the $\Ga $-action on $\kx =k[x_1,x_2,x_3]$ 
defined in \S \ref{sect:rank3 construction}. 
In this section, 
we prove the following three theorems. 
Theorem~\ref{thm:k[f,g]} holds for any $k$.

\begin{thm}\label{thm:k[f,g]}
We have $\kx ^{\ep }=k[f,g]$, 
and $\ep $ is of rank three, i.e., 
$\gamma (\kx ^{\ep })=0$. 
\end{thm}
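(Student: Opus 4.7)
The plan is to prove the two assertions in sequence: (I) $\kx^{\ep} = k[f, g]$, and (II) $\gamma(\kx^{\ep}) = 0$.

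For (I), the inclusion $k[f, g] \subseteq \kx^{\ep}$ is immediate since $f, g \in C = C[r]^{\widetilde{\ep}}$. For the reverse, Example~\ref{example:action on A[x]}(iii) gives $\kx^{\ep} = \kx \cap C$ where $C = k[f^{\pm 1}, g^{\pm 1}]$, so it suffices to establish the intersection formula $\kx \cap C = k[f, g]$. Given $h \in \kx \cap C$, write $h = P/(f^a g^b)$ with $P \in k[f, g]$ coprime in $k[f, g]$ to both $f$ and $g$; clearing denominators yields $f^a g^b \mid P$ in $\kx$, and the task is to conclude $a = b = 0$. This rests on four facts: (a) $f = x_1 x_3 - x_2^t$ is prime in $\kx$, being linear in $x_3$ over $k[x_1, x_2]$ with coprime coefficients; (b) $g$ is prime in $\kx$, using that its leading coefficient in $x_3$ is a pure power of $x_1$ together with a Gauss-content argument in $k[x_1, x_2][x_3]$; (c) divisibility descent modulo $f$: by 2$^\circ$, $g \equiv x_1^{mt-1} \pmod{f}$, and $x_1$ is transcendental in $\kx/f\kx \supset k[x_1, x_2]$, so $f \mid P(f, g)$ in $\kx$ forces $P(0, Y) = 0$ in $k[Y]$, hence $f \mid P$ in $k[f, g]$; (d) the analogous descent for $g$: since $\deg_{x_3}(f - a) = 1$ for every $a \in \bar{k}$ but $\deg_{x_3} g = lt + 1 \ge 2$, the prime $g$ of $\kx$ divides no irreducible factor $f - a$ of any $P(f) \in k[f]\setminus\{0\}$ in $\bar{k}[\x]$, ruling out $g \mid P(f)$ in $\kx$; a small extension of this argument treats general $P \in k[f, g]$. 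Induction on $a + b$ completes (I).

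For (II), I would argue by contradiction. Assume $\sigma \in \Aut_k \kx$ with $u := \sigma(x_1) \in k[f, g]$, and set $v := \sigma(x_2)$, $w := \sigma(x_3)$, so that $\kx = k[u][v, w]$. Since $u$ is a variable of $\kx$, it is irreducible in $\kx$, hence irreducible and therefore prime in the subring $k[f, g]$. The action $\ep$ fixes $u$, so it descends to a nontrivial $k$-linear $\Ga$-action $\bar{\ep}$ on $k[v, w] = \kx/u\kx$; by Miyanishi's theorem~\cite{MiyanishiNagoya} in characteristic $p$, $k[v, w]^{\bar{\ep}} = k[\bar{v}']$ for some coordinate $\bar{v}'$ of $k[v, w]$. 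The inclusion $k[f, g]/u k[f, g] \hookrightarrow k[\bar{v}']$ then realises the affine curve $\Spec(k[f, g]/u)$ as a closed subscheme of $\A_k^1$. To extract a contradiction, I would use the weighted grading on $\kx$ with $(w(x_1), w(x_2), w(x_3)) = (lt+1, m, mt-lt-1)$, under which $f$, $r$, $g$ are quasi-homogeneous of $w$-degrees $mt$, $m(lt+1)$, $(lt+1)(mt-1)$, and compare $w$-leading terms to rule out any nontrivial polynomial in $f, g$ serving as a coordinate of $\kx$.

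I expect (II) to be the main obstacle. Part (I) is essentially bookkeeping once primality of $f$ and $g$ is in hand. Part (II), by contrast, requires a sharp invariant that separates the 2-generator subring $k[f, g]$ from any subring of $\kx$ containing a coordinate; the weighted-degree approach is natural, but the weight $mt - lt - 1$ may fail to be positive (for $m \le l$), in which case one may have to replace the grading by an adapted valuation or to carry out an explicit Jacobian computation on the hypothetical coordinate system $(u, v, w)$ to close the argument.
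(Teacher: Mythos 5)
Your part (II) is where the proposal genuinely breaks down, and you concede as much. Even before the admitted difficulty, the route through the induced $\Ga $-action on $\kx /u\kx \simeq k[v,w]$ has holes: the induced action could a priori be trivial, and the inclusion of $k[f,g]/uk[f,g]$ into $k[\ol{v}']$ gives a dominant map from $\A _k^1$, not a closed immersion. More importantly, the weighted grading with $w(x_3)=mt-lt-1$ is not a positive grading when $m\le l$, so the leading-term comparison you hope to finish with is not even set up, and you offer no replacement. The paper closes (II) with a two-line argument (Freudenburg's rank criterion) that your sketch misses entirely: $f$ and $g$ have neither constant nor linear part (because $t\ge 2$ and $mt-1\ge 2$, and every term of $g^*$ is divisible by $f^lx_2$), hence no element of $k[f,g]$ has a nonzero linear part; on the other hand, for any $\sigma \in \Aut _k\kx $ the Jacobian determinant of $\sigma $ is a unit of $\kx $, i.e.\ lies in $k^*$, so the linear parts of $\sigma (x_1),\sigma (x_2),\sigma (x_3)$ are linearly independent and in particular $\sigma (x_1)$ has a nonzero linear part. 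Thus $\sigma (x_1)\notin k[f,g]$ for every $\sigma $, which is exactly $\gamma (k[f,g])=0$. This is the missing idea; no valuation or Jacobian case analysis is needed.

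Part (I) is a workable alternative to the paper's route (the paper localizes and applies its intersection lemmas together with the substitutions $x_3\mapsto 0$ and $x_2,x_3\mapsto 0$), but as written it leans on an unproved step: the primality of $g$ in $\kx $ (indeed in $\ol{k}[\x ]$, which your step (d) requires in order to pull an irreducible factor $f-a$ out of $\nu (f)$). A Gauss--content argument only reduces this to irreducibility of $g$ over $k(x_1,x_2)$ as a degree-$(lt+1)$ polynomial in $x_3$ with leading coefficient $x_1^{lt}$, which is not automatic; note that the paper itself proves irreducibility of $g$ only in Lemma~\ref{lem:K_g}, and there it invokes the factorial closedness of $\kx ^{\ep }=k[f,g]$ --- i.e.\ the very theorem you are proving --- so you cannot borrow that result without circularity. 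The repair is to replace step (d) by the substitution trick: if $g$ divides a nonzero $\nu (f)$ in $\kx $, setting $x_3=0$ gives $\nu (-x_2^t)=g|_{x_3=0}\cdot H|_{x_3=0}$ in $k[x_1,x_2]$, forcing $g|_{x_3=0}\in k[x_2]$, which is false since $g|_{x_3=0}\equiv x_1^{mt-1}$ modulo $x_2$. This avoids the primality of $g$ altogether; your steps (a) and (c) are fine.
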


Now, assume that $p>0$. 
For $0\ne h\in \kx ^{\ep }=k[f,g]$, 
we define $\ep _h\in \Ex _3(k)$. 
The following corollary is a consequence of Theorems~\ref{thm:main} 
and \ref{thm:k[f,g]}. 
The case (1) is the same as the remark after Theorem~\ref{thm:main}, 
but we use 13$^\circ $ for the case (2). 

\begin{cor}\label{cor:rank3}
We have $\gamma (\kx ^{\ep _h})=0$ if $p>0$ and one of the following holds. 

\noindent{\rm (1)} 
$h=f_1f_2$ for some $f_1,f_2\in k[f,g]$ 
with $\trd _kk[f_1,f_2]=2$.  

\noindent{\rm (2)} 
$p\mid t$ and $h\in k[f,g]\sm k[g]$. 
\end{cor}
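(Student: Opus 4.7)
The plan is to deduce both cases from Theorem~\ref{thm:main} using Theorem~\ref{thm:k[f,g]}. By Theorem~\ref{thm:k[f,g]}, $\ep $ is a rank three $\Ga $-action on $\kx $ with $\kx ^{\ep }=k[f,g]$, so the rank hypothesis of Theorem~\ref{thm:main} is already in place. What remains in each case is to exhibit $f_1,f_2\in \kx ^{\ep }=k[f,g]$ satisfying the condition $\clubsuit $ for the given $h$, namely $\pl (\ep _h)\subset f_1f_2\kx $ and $\trd _kk[f_1,f_2]=2$.

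For case (1) I would simply take $f_1,f_2$ to be the given factors of $h$: they lie in $k[f,g]$ and satisfy $\trd _kk[f_1,f_2]=2$ by hypothesis, while $\pl (\ep _h)\subset \delta _h(\kx )\subset h\kx =f_1f_2\kx $ is immediate from \S \ref{sect:plinth} (6). This is precisely the observation made in the remark following Theorem~\ref{thm:main}.

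For case (2), the natural choice is $f_1:=g$ and $f_2:=h$, both of which lie in $k[f,g]=\kx ^{\ep }$. Since $p\mid t$, the stronger inclusion from 13$^\circ $ gives $\pl (\ep _h)\subset \delta _h(\kx )\subset gh\kx =f_1f_2\kx $, so the ideal containment part of $\clubsuit $ is automatic. The one non-formal point is to verify $\trd _kk[g,h]=2$. For this I would use 3$^\circ $: $f$ and $g$ are algebraically independent over $k$, so $k[f,g]$ is a polynomial ring in two variables and $k(f,g)/k(g)$ is purely transcendental of transcendence degree one. In particular $k(g)$ is algebraically closed in $k(f,g)$, so the algebraic closure of $k(g)$ in $k[f,g]$ is $k(g)\cap k[f,g]=k[g]$. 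Since the hypothesis places $h$ in $k[f,g]\sm k[g]$, $h$ must be transcendental over $k(g)$, yielding $\trd _kk[g,h]=2$ as required.

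There is no real obstacle: the heavy lifting has already been done in Theorem~\ref{thm:main} (the rank reduction argument), Theorem~\ref{thm:k[f,g]} (the identification of $\kx ^{\ep }$), and Proposition~\ref{prop:restricts} (which underlies 13$^\circ $). The only subtlety worth stating explicitly is the algebraic closedness of $k[g]$ in the polynomial ring $k[f,g]$, used in case (2) to confirm $\clubsuit $.
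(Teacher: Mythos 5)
Your proposal is correct and follows exactly the route the paper intends: the paper proves the corollary by citing Theorem~\ref{thm:main} together with Theorem~\ref{thm:k[f,g]}, handling case (1) via the remark after Theorem~\ref{thm:main} (taking $f_1,f_2$ to be the given factors of $h$) and case (2) via 13$^\circ$ with $f_1=g$, $f_2=h$. Your explicit verification that $h\in k[f,g]\sm k[g]$ forces $\trd_k k[g,h]=2$ (via algebraic closedness of $k(g)$ in $k(f,g)$) is a detail the paper leaves implicit, and it is the right one.
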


Note that $\ep _h$ is the restriction of 
$\widetilde{\ep }_h:C[r]\ni u(r)\mapsto u(r+f^lgh)\in C[r]$ to $\kx $. 
By Lemma~\ref{lem:1var}, 
we have $C[r]^{\widetilde{\ep }_h}=C[q]$, 
where $q$ is as in (\ref{eq:rank3 q}). 
Hence, 
we get 
\begin{equation}\label{eq:C[r]^{ep_h}}
\begin{aligned}
&\kx ^{\ep _h}=C[r]^{\widetilde{\ep }_h}\cap \kx =C[q]\cap \kx 
=k[f^{\pm 1},g^{\pm 1},q]\cap \kx .
\end{aligned}
\end{equation}
To describe $\kx ^{\ep _h}$, 
we use the isomorphism 
\begin{equation}\label{eq:psi}
\psi :C[r^p]\ni u(r^p)\mapsto 
u(r^p-(f^lgh)^{p-1}r)=u(q)\in C[q]=C[r]^{\widetilde{\ep }_h}.
\end{equation}
By 1$^\circ $ and 2$^\circ $, 
it is easy to check that 
$x_i^p\in C[r^p]$ for $i=1,2,3$. 
If $p\mid t$, 
then we also have 
$x_1,x_3\in C[r^p]$. 
Moreover, 
$\psi (x_1)=\psi (g^{-1}(f^{lt+1}+r^t))=g^{-1}(f^{lt+1}+q^{t/p})$ 
is equal to $q_1$ in (\ref{eq:rank3 q_1}). 
With this notation, 
the following theorems hold.

\begin{thm}\label{thm:rank3 invariant ring}
Assume that $p\mid t$.

\nd 
{\rm (i)} 
There exist $q_2,q_3\in \kx ^{\ep _h}$ 
such that $q_1q_3-q_2^{t/p}=f$ 
and $\kx ^{\ep _h}=k[q_1,q_2,q_3]$.

\nd {\rm (ii)} 
We have 
$\psi (k[x_1,x_2^p,x_3])=\kx ^{\ep _h}$ 
if and only if $h^{p-1}\in f^lk[f,g]$. 

\end{thm}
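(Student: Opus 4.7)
\emph{Strategy.} The plan is to exploit the $C$-algebra isomorphism $\psi \colon C[r^p] \to C[q]$ from (\ref{eq:psi}) together with the identification $\kx^{\ep_h} = C[q] \cap \kx$ from (\ref{eq:C[r]^{ep_h}}). Since $p \mid t$, items 1$^\circ$ and 2$^\circ$ place $x_1, x_2^p, x_3 \in C[r^p]$, so the natural candidates for the generators are $q_1 = \psi(x_1)$ (already known to lie in $\kx^{\ep_h}$), $q_2^\ast := \psi(x_2^p) = f^{-lp}(q - q_1^{mp})$, and $q_3^\ast := \psi(x_3) = f^{-lt}(g - \psi(g^\ast) - q_1^{mt-1})$, which automatically satisfy $q_1 q_3^\ast - (q_2^\ast)^{t/p} = \psi(x_1 x_3 - x_2^t) = f$ in $Q(\kx)$. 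The heart of everything is one divisibility computation. From $r = f^l x_2 + x_1^m$ and Frobenius $r^p = f^{lp} x_2^p + x_1^{mp}$, together with the fact from (\ref{eq:rank3 q_1}) that $q_1 - x_1 \in f^{l(p-1)} \kx$ (hence $q_1^{mp} \equiv x_1^{mp} \pmod{f^{lp(p-1)}}$ by Frobenius, which is stronger than mod $f^{lp}$), I will derive
\[
q - q_1^{mp} \equiv -\,f^{l(p-1)}(gh)^{p-1} x_1^m \pmod{f^{lp}}.
\]
Since $\gcd(f, g x_1) = 1$ (from 4$^\circ$ together with $f \equiv -x_2^t \pmod{x_1}$), this shows that $f^{lp}$ divides $q - q_1^{mp}$ in $\kx$ precisely when $f^l$ divides $h^{p-1}$ in $k[f,g]$. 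A parallel mod-$f^{lt}$ analysis, using 2$^\circ$ to write $g^\ast \in k[(f^l x_2)^p, x_1]$ and Taylor-expanding the polynomial defining $g^\ast$, reduces the condition $q_3^\ast \in \kx$ to the same inequality.

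\emph{Proof of (ii).} For $(\Leftarrow)$, the hypothesis together with the divisibility above places $q_2^\ast, q_3^\ast$ in $\kx \cap C[q] = \kx^{\ep_h}$, giving $\psi(k[x_1, x_2^p, x_3]) \subseteq \kx^{\ep_h}$. For the reverse inclusion, I plan to use the free decomposition $\kx = \bigoplus_{i=0}^{p-1} k[x_1, x_2^p, x_3]\, x_2^i$, combine it with the rank-$p$ Galois description $\kx[1/(gh)] = \bigoplus_{i=0}^{p-1} \kx^{\ep_h}[1/(gh)]\, s^i$ (taking $s = r/(f^l gh)$) provided by Remark~\ref{rem:pl for ch order autom}, and descend from the localization using factorial-closure arguments. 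The direction $(\Rightarrow)$ is immediate: if $\psi(x_2^p) \in \kx^{\ep_h} \subseteq \kx$, then the central computation forces $h^{p-1} \in f^l k[f,g]$.

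\emph{Proof of (i), and main obstacle.} When the condition of (ii) fails and $q_2^\ast, q_3^\ast$ leave $\kx$, the candidate $\tilde q_2 := f^{-l(p-1)}(q - q_1^{mp})$ nevertheless lies in $\kx$ by the central computation (its numerator being divisible by $f^{l(p-1)}$), and then in $\kx^{\ep_h}$ because $f$ is a non-zero-divisor fixed by $\ep_h$. A parallel construction (with a smaller denominator than $f^{lt}$) produces a companion $\tilde q_3 \in \kx^{\ep_h}$. The principal technical content of (i), and the step I expect to be the main obstacle, is to modify $\tilde q_2, \tilde q_3$ by elements of $k[f, g, q, q_1] \subseteq \kx^{\ep_h}$ to produce $q_2, q_3$ satisfying the \emph{exact} polynomial identity $q_1 q_3 - q_2^{t/p} = f$ rather than the weaker identity coming from $\psi$ in $Q(\kx)$. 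Here I plan to use the relation $q^{t/p} = g q_1 - f^{lt+1}$ (obtained by applying $\psi$ to $g x_1 = f^{lt+1} + r^t$) to compute and cancel the discrepancy between the two sides. Once such $q_2, q_3$ are constructed, $\kx^{\ep_h} = k[q_1, q_2, q_3]$ will follow by induction on degree in $r$ (equivalently in $x_2$), subtracting suitable monomials in $q_1, q_2, q_3$, patterned after the proof of Lemma~\ref{lem:1var}.
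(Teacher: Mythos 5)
Your central congruence $q-q_1^{mp}\equiv -f^{l(p-1)}(gh)^{p-1}x_1^m \pmod{f^{lp}\kx }$ is correct and is essentially the computation the paper runs (via Claim~\ref{claim:1}), and your proof of the ``only if'' half of (ii) matches the paper's. But there are two genuine gaps, both concentrated in (i). First, the candidate you propose when $h^{p-1}\notin f^lk[f,g]$, namely $\tilde q_2=f^{-l(p-1)}(q-q_1^{mp})$, cannot be repaired into the relation $q_1q_3-q_2^{t/p}=f$ by adding elements of $k[f,g,q,q_1]$: using $q^{t/p}=gq_1-f^{lt+1}$ from (\ref{eq:rank3 q_1}), one finds $\tilde q_2^{\,t/p}\in -f^{lt+1-l(p-1)t/p}+q_1\kx [f^{-1}]$, so $q_1q_3-\tilde q_2^{\,t/p}=f$ would force $q_1$ to divide $f-f^{lt+1-l(p-1)t/p}$, which is a nonzero polynomial in $f$ alone while $\gcd(q_1,f)=1$. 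The paper's resolution (Lemma~\ref{lem:technical}, (\ref{eq:rank3 xi})) is different in kind: keep the denominator $f^{lp}$ and replace the numerator by $q-\xi$ with $\xi=q_1^{mp}-f^{l(p-1)}\eta(q_1^{mt-1})q_1^m$, where $(gh)^{p-1}=\eta(g)$. This $\xi$ is engineered so that simultaneously (a) $q-\xi\in f^{lp}\kx $, because $g$ and $x_1$ are congruent to $q_1^{mt-1}$ and $q_1$ modulo the module $M$ of 16$^\circ$, so the obstruction term is cancelled rather than divided out; and (b) $\xi\in q_1k[f,q_1]$, which is exactly what makes $\lambda:=q_1^{-1}(q^{t/p}-(q-\xi)^{t/p})$ in (\ref{eq:rank3 lambda}) a polynomial and yields the exact identity. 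This choice is the missing idea, and it is not a routine modification.

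Second, the generation statement $\kx ^{\ep _h}=k[q_1,q_2,q_3]$ does not follow by ``induction on degree in $r$ patterned after Lemma~\ref{lem:1var}'': that argument only reproves $C[r]^{\widetilde{\ep }_h}=C[q]$, i.e.\ the description after inverting $f$ and $g$. The substance is the intersection $k[f^{\pm 1},g^{\pm 1},q,q_1]\cap \kx $, which the paper computes in two stages (Proposition~\ref{prop:invariant ring first step}, then the proof of (i)) by applying Lemma~\ref{lem:intersection} first to $g$ and then to $f$; each application requires an irreducibility statement ($y_1^{lt+1}+y_2^{t/p}$, resp.\ $y_1y_3-y_2^{t/p}$) together with a transcendence-degree computation modulo $g$ (Lemma~\ref{lem:K_g}) resp.\ modulo $f$ (the algebraic-independence claim in Lemma~\ref{lem:technical}). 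The same gap affects the ``descend from the localization'' step in your proof of (ii)$\Leftarrow$, which is only a sketch; in the paper that inclusion is not proved independently but read off from (i) once one checks $q_2=\psi (x_2^p)$ and $q_3=\psi (x_3)$ under the hypothesis $h^{p-1}\in f^lk[f,g]$.
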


Let $k[\x,y,z]=\kx [y,z]$ 
be the polynomial ring in five variables over $k$. 

\begin{thm}\label{thm:rank3 invariant ring2}
Assume that $p\nmid t$, $p\nmid mt-1$ 
and $h^{p-1}\in f^{l+1}g^2k[f,g]\sm \zs $.

\nd 
{\rm (i)} 
We have $\kx ^{\ep _h}=\psi (k[x_1^p,x_2^p,x_2^p,f,g])
=k[\psi (x_1^p),\psi (x_2^p),\psi (x_3^p),f,g]$.

\nd 
{\rm (ii)} 
The $k$-algebra $\kx ^{\ep _h}$ 
is isomorphic to $k[\x,y,z]/(y^p-f,z^p-g)$, 
and is not isomorphic to $\kx $. 
\end{thm}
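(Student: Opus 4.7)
The plan is to prove (i) and (ii) simultaneously by constructing an explicit $k$-algebra homomorphism $\Phi\colon B\to \kx^{\ep_h}$, where $B:=k[\x,y,z]/(y^p-f,z^p-g)$, showing $\Phi$ is an isomorphism, and finally separating $\kx^{\ep_h}$ from $\kx$ by a singularity argument. Setting $\alpha_i:=\psi(x_i^p)$, the assignment is $\bar x_i\mapsto \alpha_i$, $\bar y\mapsto f$, $\bar z\mapsto g$. Well-definedness comes from $\psi$ being a ring homomorphism fixing $C$, which gives $\alpha_1\alpha_3-\alpha_2^t=\psi(x_1^p x_3^p-x_2^{tp})=\psi(f^p)=f^p$ and analogously $g(\alpha_1,\alpha_2,\alpha_3)=g^p$.

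First I would show $\alpha_i\in \kx$ using the hypothesis $h^{p-1}\in f^{l+1}g^2 k[f,g]$. Writing $h^{p-1}=f^{l+1}g^2 v$ with $v\in k[f,g]$, one computes $(f^lgh)^{p-1}=f^{lp+1}g^{p+1}v$. From $\alpha_1=g^{-p}(f^{(lt+1)p}+q^t)$, binomial-expanding $q^t-r^{tp}$ via $q=r^p-(f^lgh)^{p-1}r$ gives terms each carrying a factor $(f^lgh)^{j(p-1)}$ with $j\ge 1$; dividing by $g^p$ leaves the non-negative $g$-power $g^{j(p+1)-p}$, yielding $\alpha_1\in\kx$ with the sharper divisibility $\alpha_1-x_1^p\in f^{lp+1}g\kx$. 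Analogous inductive computations, using $\alpha_2=f^{-lp}(q-\alpha_1^m)$ and $f^{ltp}x_3^p=g^p-(g^*)^p-x_1^{(mt-1)p}$, establish $\alpha_2,\alpha_3\in\kx$.

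Next I would prove $\Phi$ is an isomorphism. The identity $q=\psi(f^{lp}x_2^p+x_1^{pm})=f^{lp}\alpha_2+\alpha_1^m$ puts $q$ in $A:=k[\alpha_1,\alpha_2,\alpha_3,f,g]$, and combined with $\kx[1/(fg)]=C[r]$ this yields $A[1/(fg)]=C[q]=\kx^{\ep_h}[1/(fg)]$; so $A$ and $\kx^{\ep_h}$ share a fraction field. Using the divisibilities from Step~1, the elements $r,x_1,x_2,x_3$ are each integral over $A$ (indeed $r$ satisfies $X^p-f^{lp+1}g^{p+1}vX-q=0$, and one checks $x_i^p\in A[r]$), so $\kx^{\ep_h}\subseteq\kx$ is a finite $A$-module. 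Since $\kx^{\ep_h}$ is normal (as invariant ring of a finite group on the normal ring $\kx$), it is contained in the normalization of $A$; once $B\cong A$ is verified to be normal by Serre's criterion (R1 + S2, with S2 automatic for the complete intersection $B$), one concludes $\kx^{\ep_h}=A$, which is (i). For injectivity of $\Phi$: $B$ is a $3$-dimensional integral domain, because $f$ is irreducible and the hypothesis $p\nmid mt-1$ prevents $g$ from being a $p$-th power in $k(\x)[f^{1/p}]$; hence any nonzero kernel would drop the image's Krull dimension below $3$, contradicting $\trd_k\kx^{\ep_h}=3$.

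Finally, to see $\kx^{\ep_h}\cong B\not\cong\kx$, it suffices to exhibit a singular point of $B$. At the origin, the Jacobian of $(y^p-f,z^p-g)$ collapses to partials in $\x$ alone, since $py^{p-1}=pz^{p-1}=0$ in characteristic $p$. Because $f\in(x_1,x_2,x_3)^2$ (from $f=x_1x_3-x_2^t$ with $t\ge 2$) and $g\in(x_1,x_2,x_3)^2$ (lowest term $x_1^{mt-1}$ with $mt\ge 3$), all partials of $f$ and $g$ vanish at the origin; hence the Jacobian has rank $0<2$ there, exhibiting a singular point on the codimension-$2$ complete intersection $B$, which cannot be isomorphic to the regular ring $\kx$. \emph{The main obstacle} I anticipate is the normality of $A\cong B$---specifically the verification that the singular locus of $B$ has codimension $\ge 2$---which is exactly what converts the localized equality $A[1/(fg)]=\kx^{\ep_h}[1/(fg)]$, together with the finiteness of $\kx^{\ep_h}$ over $A$, into the global equality $\kx^{\ep_h}=A$; the Step~1 divisibility computations are routine but delicate, resting on the exact powers of $f$ and $g$ in $h^{p-1}$.
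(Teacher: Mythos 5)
Your overall architecture for the second half of (ii) is the paper's own: identifying $k[\x ,y,z]/(y^p-f,z^p-g)$ with $k[x_1^p,x_2^p,x_3^p,f,g]$ and exhibiting the singular point at the origin. But your route to (i) replaces the paper's localization-and-intersection argument with ``same fraction field $+$ module-finite $+$ normal target,'' and both of the new ingredients this requires are left with genuine gaps. First, module-finiteness of $\kx ^{\ep _h}$ over $A:=k[\psi (x_1^p),\psi (x_2^p),\psi (x_3^p),f,g]$: your computation does give $r$ integral over $A$ and $x_1^p,x_2^p\in A[r]$ (the telescoping of $q^t-r^{tp}$ against $r^p-q=(f^lgh)^{p-1}r$ works), but for $x_3^p=f^{-ltp}\bigl(g^p-(g^*)^p-x_1^{(mt-1)p}\bigr)$ you only obtain $f^{ltp}x_3^p\in A[r]$, and dividing by $f^{ltp}$ \emph{inside} $A[r]$ is precisely the kind of statement that requires controlling intersections such as $A[r][f^{-1}]\cap \kx $ --- i.e., the content of the paper's Lemmas~\ref{lem:intersection} and \ref{lem:intersection 2}. ``One checks $x_i^p\in A[r]$'' is not routine for $i=3$, and without it the finiteness of $\kx ^{\ep _h}$ over $A$ is unproved.

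Second, and more seriously, the normality of $B$ is not a deferrable formality. S2 is indeed free (complete intersections are Cohen--Macaulay), but R1 requires that the locus where $\nabla f$ and $\nabla g$ are proportional --- equivalently, where all three minors $D_{(f,g)}(x_1)$, $D_{(f,g)}(x_2)$, $D_{(f,g)}(x_3)$ vanish --- have codimension at least two in $\A _k^3$. The paper's Lemma~\ref{lem:derivation} establishes only the much weaker facts $D_{(x_2,f)}(g)\notin f\kx $ and $D_{(x_1,g)}(f)\notin g\kx $, i.e., it rules out $f$ and $g$ as divisorial components of that degeneracy locus; it says nothing about other divisors, and this is exactly where the hypotheses $p\nmid t$ and $p\nmid mt-1$ are consumed. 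Your plan needs the strictly stronger global statement that the three minors have no common factor at all, which you have not proved and which, given the complicated explicit form of $g$, is not obviously easier than the paper's strategy of working only modulo $f$ and modulo $g$ (Lemma~\ref{lem:derivation2}) and then invoking the intersection lemmas. Until these two points are supplied, the equality $\kx ^{\ep _h}=A$ --- hence statement (i) and the first half of (ii) --- is not established.
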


\subsection{Intersection lemmas}\label{sect:intersection lemma}

First, we prove some lemmas. 
If $I$ is an ideal of a ring $B$, 
then $\pi :B\to B/I$ denotes the natural surjection. 
Now, let $R\subset B$ be domains, 
$a_1,\ldots ,a_s,b\in B\sm \zs $ with $b\not\in B^*$, 
and $\Ry =R[y_1,\ldots ,y_s]$ 
the polynomial ring in $s$ variables over $R$. 
For $\sigma :\Ry \ni u(\y )\mapsto u(a_1,\ldots ,a_s)\in B$ 
and $\pi :B\to B/bB$, 
we set $\ol{\sigma }:=\pi \circ \sigma $ 
and $A:=\sigma (\Ry )=R[a_1,\ldots ,a_s]$. 
Then, the following lemma holds. 

\begin{lem}\label{lem:intersection}
If there exists $\mathcal{S}\subset \Ry $ 
such that $\ker \ol{\sigma }=(\mathcal{S})$ and 
$\sigma (\mathcal{S})\subset bA[b]$, 
then we have $A[
b^{\pm 1}]\cap B=A[b]$. 
If moreover $b$ is in $A$, 
then $A[b^{-1}]\cap B=A$. 
\end{lem}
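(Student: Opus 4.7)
The plan is to reduce both claims to the single assertion
\[
bB\cap A[b]=bA[b],
\]
i.e., if $y\in A[b]$ is divisible by $b$ in $B$, then it is already divisible by $b$ in $A[b]$. Granting this, any element $x\in A[b^{\pm 1}]\cap B$ can be written $x=b^{-n}y$ with $y\in A[b]$ and $n\ge 0$; if $n\ge 1$, then $b^nx=y\in bB\cap A[b]=bA[b]$, so $y=bz$ with $z\in A[b]$, and since $B$ is a domain we may cancel $b$ to obtain $x=b^{-(n-1)}z$. Induction on $n$ then gives $x\in A[b]$, proving the first equality. The ``moreover'' part is immediate, since $b\in A$ forces $A[b]=A$.

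To establish the key claim, I would extend $\sigma $ to a homomorphism $\widetilde{\sigma }:R[\y ][y_0]\to B$ from the polynomial ring in one additional variable $y_0$ by setting $\widetilde{\sigma }(y_0):=b$. Its image is precisely $A[b]$, and the composite $\overline{\widetilde{\sigma }}:=\pi \circ \widetilde{\sigma }$ kills $y_0$ (since $\pi (b)=0$) as well as every element of $\mathcal{S}$ (since $\mathcal{S}\subset \ker \overline{\sigma }$). The crucial intermediate step is to prove
\[
\ker \overline{\widetilde{\sigma }}=(\mathcal{S})+(y_0)\subset R[\y ][y_0].
\]
For this, write any $P(\y ,y_0)\in \ker \overline{\widetilde{\sigma }}$ as $P(\y ,y_0)=P(\y ,0)+y_0Q(\y ,y_0)$ with $Q\in R[\y ][y_0]$; then $\overline{\widetilde{\sigma }}(P)=\overline{\sigma }(P(\y ,0))=0$, so $P(\y ,0)\in \ker \overline{\sigma }=(\mathcal{S})$, giving the nontrivial inclusion.

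Now take $y\in A[b]\cap bB$ and lift it to some $P\in R[\y ][y_0]$ with $y=\widetilde{\sigma }(P)$. Since $\pi (y)=0$, we have $P\in \ker \overline{\widetilde{\sigma }}$, so $P=\sum _is_iF_i+y_0G$ with $s_i\in \mathcal{S}$ and $F_i,G\in R[\y ][y_0]$. Applying $\widetilde{\sigma }$ yields
\[
y=\sum _i\sigma (s_i)\widetilde{\sigma }(F_i)+b\widetilde{\sigma }(G),
\]
and because $\sigma (s_i)\in bA[b]$ by hypothesis while $\widetilde{\sigma }(F_i),\widetilde{\sigma }(G)\in A[b]$, the right-hand side lies in $bA[b]$. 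This proves $bB\cap A[b]\subset bA[b]$; the reverse inclusion is trivial.

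I expect the main obstacle to be recognizing that the hypothesis $\sigma (\mathcal{S})\subset bA[b]$ is exactly what is needed after enlarging the generating set of the kernel by $y_0$: it is precisely this condition that lets the $(\mathcal{S})$-part of a preimage in $R[\y ][y_0]$ contribute a factor of $b$ under $\widetilde{\sigma }$. Once the generation of $\ker \overline{\widetilde{\sigma }}$ is in hand, the remainder is the standard clearing-of-denominators induction described above.
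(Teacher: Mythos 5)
Your proof is correct and follows essentially the same strategy as the paper: both establish that an element of $A[b]$ (resp.\ of $A$, in the paper) lying in $bB$ must already lie in $bA[b]$, by lifting to the polynomial ring and combining $\ker \ol{\sigma }=(\mathcal{S})$ with $\sigma (\mathcal{S})\subset bA[b]$, and then run a descent on the power of $b$ in the denominator. The only cosmetic difference is that you adjoin an auxiliary variable $y_0\mapsto b$ so as to treat all of $A[b]\cap bB$ at once, whereas the paper proves the containment only for $A\cap bB$ and applies it to the constant coefficient $c_0$ of $cb^u=\sum _ic_ib^i$ during the descent.
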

\begin{proof}
First, 
we show that $A\cap bB\subset bA[b]$. 
Pick any $a\in A\cap bB$. 
Since $a\in A$, 
we can write $a=\sigma (p)$, 
where $p\in \Ry $. 
Then, 
we have 
$p\in \ker \ol{\sigma }=(\mathcal{S})$, 
since $\sigma (p)=a\in bB$. 
Write $p=\sum _ip_iq_i$, 
where $p_i\in \mathcal{S}$ and $q_i\in \Ry $. 
Since $\sigma (p_i)\in bA[b]$ by assumption, 
and $\sigma (q_i)\in A$, 
it follows that 
$a=\sigma (p)=\sum _i\sigma (p_i)\sigma (q_i)\in bA[b]$.

Now, we prove that $A[b^{\pm 1}]\cap B\subset A[b]$ by contradiction. 
Suppose that there exists $c\in A[b^{\pm 1}]\cap B\sm A[b]$. 
Choose the least $u\ge 1$ with $cb^u\in A[b]$, 
and write $cb^u=\sum _{i\ge 0}c_ib^i$, 
where $c_i\in A$. 
Then, 
$c_0$ is not in $bA[b]$ by the minimality of $u$. 
On the other hand, 
$c_0=cb^u-\sum _{i\ge 1}c_ib^i$ belongs to $A\cap bB\subset bA[b]$ 
by the discussion above. 
This is a contradiction. 
The inclusion $A[b^{\pm 1}]\cap B\supset A[b]$ is clear. 
\end{proof}

\begin{rem}\label{rem:indep}\rm 
Regard $B/bB$ as an $R$-algebra. 
If $\ol{a}_1,\ldots ,\ol{a}_l\in B/bB$ are algebraically 
independent over $R$, 
then the assumption of Lemma~\ref{lem:intersection} 
holds with $\mathcal{S}=\zs $. 
\end{rem}

Next, 
we consider the case where $b$ is in $R$. 
Since $bR\subset R\cap bB$, 
there exists a natural homomorphism 
$\ol{R}:=R/bR\to R/(R\cap bB)\to B/bB$. 
Hence, 
we can regard $B/bB$ as an $\ol{R}$-algebra. 
We define a substitution map $\widehat{\sigma }:\ol{R}[\y ]\to B/bB$ 
by $\widehat{\sigma }(y_i)=\ol{a}_i$ for $i=1,\ldots ,s$. 
Then, the following lemma holds.

\begin{lem}\label{lem:intersection 2}
Assume that $b$ is in $R$. 
If there exists $\mathcal{S}\subset \Ry $ such that 
the image of $\mathcal{S}$ in $\ol{R}[\y ]$ generates 
$\ker \widehat{\sigma }$, 
and $\sigma (\mathcal{S})\subset bA[b]$, 
then we have $A[b^{-1}]\cap B=A$. 
\end{lem}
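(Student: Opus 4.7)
The plan is to imitate the proof of Lemma~\ref{lem:intersection}, adapting only the first step, which was the containment $A\cap bB\subset bA[b]$. Since $b\in R\subset A$, we have $A[b]=A$, so the present goal simplifies: it suffices to prove $A\cap bB\subset bA$, and then to deduce $A[b^{-1}]\cap B=A$ by exactly the same telescoping argument as before.

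First I would establish $A\cap bB\subset bA$. Pick $a\in A\cap bB$ and write $a=\sigma (p)$ for some $p\in \Ry $. Because $\sigma (p)=a$ lies in $bB$, its image in $B/bB$ is zero, so the class $\widehat{p}$ of $p$ in $\ol{R}[\y ]=\Ry /b\Ry $ lies in $\ker \widehat{\sigma }$. By hypothesis, the image of $\mathcal{S}$ in $\ol{R}[\y ]$ generates this kernel as an ideal, so we may write
$$
\widehat{p}=\sum _i\widehat{p_i}\,\widehat{q_i}\quad\text{in }\ol{R}[\y ],
$$
with $p_i\in \mathcal{S}$ and with representatives $q_i\in \Ry $ of the coefficients. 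Lifting back to $\Ry $, we get $p=\sum _ip_iq_i+br$ for some $r\in \Ry $. Applying $\sigma $ and using that $\sigma (p_i)\in bA[b]=bA$ by hypothesis and $\sigma (q_i),\sigma (r)\in A$, we conclude $a=\sigma (p)\in bA$.

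Once $A\cap bB\subset bA$ is in hand, I would finish as in Lemma~\ref{lem:intersection}, but more cleanly thanks to $b\in R$. Given $c\in A[b^{-1}]\cap B$, choose the smallest $u\ge 0$ with $cb^u\in A$. If $u\ge 1$, then $cb^u\in A\cap bB\subset bA$, hence $cb^u=ba'$ for some $a'\in A$. Since $B$ is a domain and $b\ne 0$, this forces $cb^{u-1}=a'\in A$, contradicting the minimality of $u$. Therefore $u=0$ and $c\in A$, giving $A[b^{-1}]\cap B\subset A$; the reverse inclusion is immediate.

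The only genuinely delicate point is the lifting step: one must check that the combination of the hypothesis $\sigma (\mathcal{S})\subset bA[b]$ with the presentation of $\ker \widehat{\sigma }$ stays inside $bA$ after lifting from $\ol{R}[\y ]$ to $\Ry $. This works because $b\in R$ makes the quotient $\Ry /b\Ry $ agree with $\ol{R}[\y ]$, so the lifting of coefficients is unobstructed. Everything else is formal and parallels Lemma~\ref{lem:intersection}.
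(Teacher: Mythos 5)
Your proof is correct and is essentially the paper's argument: the paper simply observes that $\ker\ol{\sigma}=(\{b\}\cup\mathcal{S})$ (since $\ol{\sigma}$ factors through $\ol{R}[\y]=\Ry /b\Ry$) and that $\sigma(b)=b\in bA[b]$, then cites Lemma~\ref{lem:intersection} with $\{b\}\cup\mathcal{S}$ in place of $\mathcal{S}$, whereas you unfold that citation by redoing the lifting and minimality steps inline. The content — absorbing the extra $br$ term via $\sigma(b)=b\in bA$ and then descending powers of $b$ using that $B$ is a domain — is identical.
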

\begin{proof}
Note that $\ol{\sigma }$ equals $\Ry \twoheadrightarrow \ol{R}[\y ]
\stackrel{\widehat{\sigma }}{\to }B/bB$. 
Since $\ker \widehat{\sigma }$ 
is generated by the image of $\mathcal{S}$ in $\ol{R}[\y ]$, 
we see that $\ker \ol{\sigma }=(\{ b\} \cup \mathcal{S})$. 
Since $\sigma (b)=b\in bA[b]$, 
and $\sigma (\mathcal{S})\subset bA[b]$ by assumption, 
we get $A[b^{-1}]\cap B=A$ 
by Lemma~\ref{lem:intersection}. 
\end{proof}

\begin{rem}\label{rem:indep2}\rm 
Assume that $b$ is in $R$. 
If $\ol{a}_1,\ldots ,\ol{a}_l\in B/bB$ are algebraically 
independent over $R/bR$, 
then the assumption of Lemma~\ref{lem:intersection 2} 
holds with $\mathcal{S}=\zs $. 
\end{rem}

\begin{rem}\label{rem:intersection}\rm
Let $S$ be a subring of $B$, 
and $a,b\in B\sm \zs $. 
If $S[a,b^{\pm 1}]\cap B=S[a,b]$, 
then 
we have $S[a^{\pm 1},b^{\pm 1}]\cap B=S[a^{\pm 1},b]\cap B$. 
Actually, 
if $c\in S[a^{\pm 1},b^{\pm 1}]\cap B$, 
there exists $u\ge 0$ such that $a^uc\in S[a,b^{\pm 1}]\cap B=S[a,b]$. 
This implies $c\in S[a^{\pm 1},b]$. 
\end{rem}

\begin{proof}[Proof of Theorem~{\rm \ref{thm:k[f,g]}}]
Since 
$\kx ^{\ep }=
C\cap \kx =k[f^{\pm 1},g^{\pm 1}]\cap \kx $, 
we show that 
(i) $k[f^{\pm 1},g^{\pm 1}]\cap \kx =k[f^{\pm 1},g]\cap \kx $ and 
(ii) $k[
f^{\pm 1},g]\cap \kx =k[f,g]$. 
For (i), 
it suffices to check that 
(i$'$) $k[f,g^{\pm 1}]
\cap \kx =k[f,g]$ 
by Remark~\ref{rem:intersection}.

(i$'$) 
By Remark~\ref{rem:indep} 
with $(R,a_1,b)=(k,f,g)$, 
it suffices to show that 
$\ol{f}\in \kx /g\kx $ is transcendental over $k$. 
Supposing the contrary, 
there exist $\nu (T)\in k[T]\sm \zs $ 
and $H\in \kx $ such that $\nu (f)=gH$. 
Then, 
we get $\nu (-x_2^t)=g|_{x_3=0}{\cdot }H|_{x_3=0}$ 
by the substitution $x_3\mapsto 0$. 
This implies that $g|_{x_3=0}\in k[x_2]$, 
which is absurd (cf.~(\ref{eq:rank3 g})).

(ii) 
We repeat the same argument with $f$ and $g$ interchanged. 
If $\nu (g)=fH$ for some $\nu (T)\in k[T]\sm \zs $ 
and $H\in \kx $, 
then we get $\nu (x_1^{mt-1})=0$ 
by the substitution $x_2,x_3\mapsto 0$, 
since $f\mapsto 0$ and $g\mapsto x_1^{mt-1}$ by 2$^\circ $. 
This is a contradiction.

Observe that 
$f$ and $g$ have no linear part, 
since $t,mt-1\ge 2$. 
Hence, 
no element of $k[f,g]$ has a linear part. 
This implies that 
$\gamma (k[f,g])=0$ (cf.~\cite{Frank}). 
In fact, 
$\sigma (x_i)$ has a linear part for all $\sigma \in \Aut _k\kx $ and $i$, 
since the Jacobian of $\sigma $ lies in $k^*$. 
\end{proof}

\subsection{Proof of Theorem~\ref{thm:rank3 invariant ring}}
\label{sect:rank3 invariant proof}

The goal of \S \ref{sect:rank3 invariant proof} 
is to prove Theorem~\ref{thm:rank3 invariant ring}. 
For the moment, 
let $\ol{u}$ denote the image of $u\in \kx $ in $\kx /g\kx $. 
Then, 
from (\ref{eq:rank3 q}) and (\ref{eq:rank3 q_1}), 
we see that

\nd {\bf 14}$^\circ $ 
$\ol{q}=\ol{r^p-(f^lgh)^{p-1}r}=\ol{r^p}$ 
and $\ol{q}_1\in \ol{x}_1
+\ol{g^{p-2}(f^lh)^{p-1}r}k[\ol{f},\ol{r}]
\subset \ol{x}_1+k[\ol{f},\ol{r}]$.

\begin{lem}\label{lem:K_g}
$g$ is irreducible in $\kx $. 
If $p\mid t$, then 
we have $\trd _kk[\ol{f},\ol{q},\ol{q}_1]=2$. 
\end{lem}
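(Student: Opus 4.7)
The plan is to prove the two assertions separately. For the irreducibility of $g$, my plan is to leverage the fact that $\kx^{\ep}=k[f,g]$ is factorially closed in $\kx$ (Theorem~\ref{thm:k[f,g]} and Remark~\ref{rem:Miyanishi}~(i)). Since $f$ and $g$ are algebraically independent over $k$ by 3$^\circ$, $k[f,g]$ is a polynomial ring in two indeterminates in which $g$ is a variable, hence irreducible. Any factorization $g=g_1g_2$ in $\kx$ must, by factorial closedness, satisfy $g_1,g_2\in k[f,g]$, and the irreducibility of $g$ there forces one of the factors to be a unit in $k[f,g]\subset\kx$.

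For the transcendence statement, assuming $p\mid t$, I first note that $\trd_k\kx/g\kx=2$: indeed $g$ is a nonzero non-unit in the three-dimensional $k$-domain $\kx$, so by Krull's principal ideal theorem $\kx/g\kx$ is a two-dimensional finitely generated $k$-domain. This gives the upper bound $\trd_kk[\ol f,\ol q,\ol{q}_1]\le 2$. For the matching lower bound, I aim to show that $\ol f$ and $\ol{q}_1$ are algebraically independent over $k$.

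The main obstacle is to prove that $\ol f$ and $\ol{x}_1$ are algebraically independent in $\kx/g\kx$. I argue by contradiction: suppose $P(f,x_1)=gH$ for some nonzero $P\in k[Y,Z]$ and some $H\in\kx$. Specialize at $x_1=0$. Because $p\mid t$, assertion 2$^\circ$ gives $x_1\mid g^*$, and so, combined with $f|_{x_1=0}=-x_2^t$ and $x_1^{mt-1}|_{x_1=0}=0$, the expansion $g=f^{lt}x_3+g^*+x_1^{mt-1}$ yields $g|_{x_1=0}=(-1)^{lt}x_2^{lt^2}x_3$, which is linear in $x_3$. The specialized equation then reads $P(-x_2^t,0)=(-1)^{lt}x_2^{lt^2}x_3\cdot H|_{x_1=0}$ in $k[x_2,x_3]$. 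Comparing $x_3$-degrees forces $H|_{x_1=0}=0$ (hence $x_1\mid H$) and $P(-x_2^t,0)=0$, which by the injectivity of $Y\mapsto -x_2^t$ gives $P(Y,0)=0$, i.e.\ $Z\mid P$. Writing $P=ZP_1$ and $H=x_1H_1$ and cancelling $x_1$ produces $P_1(f,x_1)=gH_1$, and I induct on $\deg_ZP$. In the base case $P\in k[Y]$, the factorial closedness of $k[f,g]$ together with $P(f)=gH$ forces $H\in k[f,g]$; but $P(f)$ has $g$-degree $0$ while $g$ has $g$-degree $1$ in $k[f][g]$, so $H=0$ and $P=0$.

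To conclude, I invoke 14$^\circ$: $\ol{q}_1-\ol{x}_1\in k[\ol f,\ol r]$. The identity $x_1g=f^{lt+1}+r^t$ reduces modulo $g$ to $\ol r^{t}+\ol f^{lt+1}=0$, so $\ol r$ is algebraic over $k(\ol f)$, and hence so is $\ol{q}_1-\ol{x}_1$. Since $\ol{x}_1$ is transcendental over $k(\ol f)$ by the previous paragraph, the sum $\ol{q}_1=\ol{x}_1+(\ol{q}_1-\ol{x}_1)$ is transcendental over $k(\ol f)$ as well, and therefore $\ol f$ and $\ol{q}_1$ are algebraically independent over $k$, giving $\trd_kk[\ol f,\ol q,\ol{q}_1]\ge\trd_kk[\ol f,\ol{q}_1]=2$.
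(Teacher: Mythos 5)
Your proof is correct. The irreducibility argument is exactly the paper's: factorial closedness of $\kx ^{\ep }=k[f,g]$ (Remark~\ref{rem:Miyanishi}~(i)) pushes any factorization of $g$ into the polynomial ring $k[f,g]$, where $g$ is a variable. For the transcendence degree you take a genuinely different route. The paper gets both bounds at once by showing that the \emph{whole} fraction field $Q(\kx /g\kx )$ is algebraic over $k(\ol{f},\ol{q},\ol{q}_1)$: since $x_2=f^{-l}(r-x_1^m)$ and $x_3=x_1^{-1}(f-x_2^t)$, one has $Q(\kx /g\kx )=k(\ol{f},\ol{r},\ol{x}_1)=k(\ol{f},\ol{r},\ol{q}_1)$ by 14$^\circ$, which is algebraic over $k(\ol{f},\ol{r}^p,\ol{q}_1)=k(\ol{f},\ol{q},\ol{q}_1)$; the value $2$ then falls out of $\trd _k\kx /g\kx =2$ with no need to certify any particular pair as independent. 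You instead prove the lower bound directly by exhibiting $\ol{f},\ol{q}_1$ as algebraically independent, which forces you to establish from scratch that $\ol{f}$ and $\ol{x}_1$ are algebraically independent in $\kx /g\kx$; your specialization $x_1\mapsto 0$ (using that $p\mid t$ makes $x_1\mid g^*$ by 2$^\circ$, so $g|_{x_1=0}=(-1)^{lt}x_2^{lt^2}x_3$), the degree comparison in $x_3$, and the induction on $\deg _Z P$ are all sound, as is the final step combining $\ol{r}^t=-\ol{f}^{lt+1}$ with 14$^\circ$ to transfer transcendence from $\ol{x}_1$ to $\ol{q}_1$. What your approach costs is this extra two-step computation (which the paper's argument yields for free, since $\ol{f},\ol{x}_1$ generate a field of transcendence degree $2$ up to algebraic elements); what it buys is an explicit pair of algebraically independent invariants modulo $g$, rather than an indirect equality of transcendence degrees.
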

\begin{proof}
For the first part, 
suppose that $g=p_1p_2$ for some $p_1,p_2\in \kx \sm k$. 
Then, $p_1$ and $p_2$ are in $k[f,g]$, 
since $k[f,g]=\kx ^{\ep }$ is factorially closed in $\kx $ 
by Remark~\ref{rem:Miyanishi} (i). 
Hence, we can write $g=\gamma _1(f,g)\gamma _2(f,g)$, 
where $\gamma _i(x,y)\in k[x,y]\sm k$. 
This contradicts 3$^\circ $. 
For the last part, 
note that $\trd _k\kx /g\kx =2$, 
and $k[\ol{f},\ol{q},\ol{q}_1]=k[\ol{f},\ol{r}^p,\ol{q}_1]$ 
by 14$^\circ $. 
Hence, it suffices to show that 
$Q(\kx /g\kx )$ is algebraic over 
$k(\ol{f},\ol{r}^p,\ol{q}_1)$. 
We have 
$x_2=f^{-l}(r-x_1^m)$ by 1$^\circ $, 
and $x_3=x_1^{-1}(f-x_2^t)$ since $f=x_1x_3-x_2^t$. 
Hence, $x_2$ and $x_3$ are in $k[f^{\pm 1},r,x_1^{\pm 1}]$. 
Clearly, 
$\ol{f}$ and $\ol{x}_1$ are nonzero. 
Thus, 
$Q(\kx /g\kx )=k(\ol{x}_1,\ol{x}_2,\ol{x}_3)$ 
is equal to $k(\overline{f},\overline{r},\overline{x}_1)$. 
Since $\ol{q}_1\in \ol{x}_1+k[\ol{f},\ol{r}]$ by 14$^\circ $, 
$k(\overline{f},\overline{r},\overline{x}_1)$ 
is equal to 
$k(\overline{f},\overline{r},\overline{q}_1)$, 
which is algebraic over 
$k(\overline{f},\overline{r}^p,\overline{q}_1)$. 
\end{proof}

Let $\ky =k[y_1,y_2,y_3]$ be the polynomial ring in three variables over $k$. 

\begin{prop}\label{prop:invariant ring first step}
If $p\mid t$, 
then we have $\kx ^{\ep _h}=k[f^{\pm 1},g,q,q_1]\cap \kx $. 
\end{prop}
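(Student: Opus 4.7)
The plan is to apply Lemma~\ref{lem:intersection} after localizing at $f$. By (\ref{eq:C[r]^{ep_h}}) we have $\kx^{\ep_h} = k[f^{\pm 1}, g^{\pm 1}, q] \cap \kx$, so setting $A := k[f^{\pm 1}, g, q, q_1]$ and $\kx_f := \kx[f^{-1}]$, and observing from (\ref{eq:rank3 q_1}) that $q_1 = g^{-1}(f^{lt+1} + q^{t/p}) \in k[f^{\pm 1}, g^{-1}, q]$, we get $A[g^{-1}] = k[f^{\pm 1}, g^{\pm 1}, q]$. The task thus reduces to $A[g^{-1}] \cap \kx_f = A$, since intersecting with $\kx \subset \kx_f$ then yields $k[f^{\pm 1}, g^{\pm 1}, q] \cap \kx = A \cap \kx$.

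I apply Lemma~\ref{lem:intersection} with $R = k[f^{\pm 1}]$, $B = \kx_f$, $(a_1, a_2, a_3) = (g, q, q_1)$ and $b = g$. Since $g \in A$, the ``moreover'' clause of Lemma~\ref{lem:intersection} delivers the desired equality once I produce $\mathcal{S} \subset R[y_1, y_2, y_3]$ generating $\ker \overline{\sigma}$ with $\sigma(\mathcal{S}) \subset g A$. I take
\[
\mathcal{S} = \{\, y_1,\ f^{lt+1} + y_2^{t/p} \,\};
\]
the containment $\sigma(\mathcal{S}) \subset gA$ is clear from $\sigma(y_1) = g$ and $\sigma(f^{lt+1} + y_2^{t/p}) = f^{lt+1} + q^{t/p} = g q_1$.

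To identify $\ker \overline{\sigma}$ with $(\mathcal{S})$, I first pin down $\ker \sigma$. The polynomial $P_0 := y_1 y_3 - (f^{lt+1} + y_2^{t/p})$ lies in $\ker \sigma$, and it is irreducible in $R[y_1,y_2,y_3]$: as a polynomial of degree $1$ in $y_3$, its coefficients $y_1$ and $f^{lt+1} + y_2^{t/p}$ are coprime in the UFD $R[y_1,y_2]$. Since $f, g, r$ are algebraically independent over $k$ by $3^\circ$ and $q$ is monic of degree $p$ in $r$ over $R[g]$, one has $\trd _R R[g,q,q_1] = 2$, so $\ker \sigma$ is a height-one prime, whence $\ker \sigma = (P_0)$. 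Reducing $P_0$ modulo $y_1$ gives $-(f^{lt+1} + y_2^{t/p})$, so $(\mathcal{S}) = (y_1) + (P_0) \subset \ker \overline{\sigma}$.

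The main obstacle is the reverse inclusion, which amounts to injectivity of the induced map
\[
\tau : R[y_2, y_3]/(f^{lt+1} + y_2^{t/p}) \longrightarrow \kx_f/g\kx_f, \qquad y_2 \mapsto \overline{q},\ y_3 \mapsto \overline{q}_1.
\]
Now $\kx_f/g\kx_f$ is a domain, since $g$ is irreducible in $\kx$ by Lemma~\ref{lem:K_g} and coprime to $f$ by $4^\circ$; its subring $k[\overline{f}^{\pm 1}, \overline{q}, \overline{q}_1]$, which is the image of $\tau$, has $\trd _k = 2$ by Lemma~\ref{lem:K_g}. It therefore suffices to check that the source is a domain of $\trd _k = 2$, because then $\tau$ becomes a surjection of $k$-domains of equal transcendence degree and is automatically injective. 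The source is a domain iff $y_2^{t/p} + f^{lt+1}$ is irreducible in $R[y_2]$; with respect to the $f$-adic valuation on $k(f)$ its Newton polygon is a single segment from $(0, lt+1)$ to $(t/p, 0)$, and since $\gcd(lt+1, t) = 1$ and $t/p \mid t$, the slope $-(lt+1)/(t/p)$ is in lowest terms, forcing the irreducibility and completing the proof.
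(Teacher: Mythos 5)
Your proof is correct, and at the level of strategy it matches the paper's: both start from (\ref{eq:C[r]^{ep_h}}), apply Lemma~\ref{lem:intersection} with $b=g$, and feed in the relation $gq_1=f^{lt+1}+q^{t/p}$ together with the transcendence-degree count of Lemma~\ref{lem:K_g}. The execution of the key step --- identifying $\ker\ol{\sigma}$ --- is genuinely different, though. The paper keeps $f$ as a polynomial generator: via Remark~\ref{rem:intersection} it reduces to $A[g^{\pm 1}]\cap \kx =A[g]$ with $A=k[f,q,q_1]$, so that $\ker\ol{\sigma}$ is a height-one prime of the UFD $k[y_1,y_2,y_3]$, hence principal, and is pinned down in one line by exhibiting the irreducible element $y_1^{lt+1}+y_2^{t/p}$ inside it (irreducible because $\gcd(lt+1,t/p)=1$). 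You instead invert $f$ from the outset, which lets you use the ``moreover'' clause of Lemma~\ref{lem:intersection} directly (since $g\in A$) and bypass Remark~\ref{rem:intersection}, but at the cost that $\ker\ol{\sigma}$ now has height two; you must then show that $R[y_1,y_2,y_3]/(\mathcal{S})$ is already a domain of transcendence degree two, which forces the extra Newton-polygon argument for the irreducibility of $y_2^{t/p}+f^{lt+1}$ over $k[f^{\pm 1}]$. That argument is sound (the slope $-(lt+1)/(t/p)$ is in lowest terms and the polynomial is monic in $y_2$, so Dumas plus Gauss applies), and your ``surjection of equal-dimensional finitely generated $k$-domains is injective'' step is legitimate. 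In short: same skeleton, but the paper's choice of not inverting $f$ buys a one-line kernel identification via principality, whereas your localization buys a cleaner application of the intersection lemma at the price of a heavier irreducibility input and a two-generator kernel.
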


\begin{proof}
Since $q_1$ is in $\kx ^{\ep _h}$, 
(\ref{eq:C[r]^{ep_h}}) implies that 
$\kx ^{\ep _h}=k[f^{\pm 1},g^{\pm 1},q,q_1]\cap \kx $. 
We show that this 
is equal to  
$k[f^{\pm 1},g,q,q_1]\cap \kx $. 
By Remark~\ref{rem:intersection}, 
it suffices to verify that 
$k[f,g^{\pm 1},q,q_1]\cap \kx =k[f,g,q,q_1]$, 
i.e., 
$A[g^{\pm 1}]\cap \kx =A[g]$, 
where $A:=k[f,q,q_1]$. 
For $\sigma :\ky \ni \nu (\y )\mapsto \nu (f,q,q_1)\in \kx $ 
and $\pi :\kx \to \kx /g\kx $, 
we set $\ol{\sigma }=\pi \circ \sigma $. 
Then, 
we have $\trd _k\ol{\sigma }(\ky )=
\trd _kk[\ol{f},\ol{q},\ol{q}_1]=2$ by Lemma~\ref{lem:K_g}. 
Hence, 
$\ker \ol{\sigma }$ is a prime ideal of $\ky $ of height one, 
and thus principal. 
Therefore, 
$\eta \in \ker \ol{\sigma }$ satisfies 
$\ker \ol{\sigma }=(\eta )$ 
whenever $\eta $ is irreducible in $\ky $. 
Now, 
observe that 
(1) $\sigma (y_1^{lt+1}+y_2^{t/p})=f^{lt+1}+q^{t/p}=gq_1\in gA[g]$ 
by (\ref{eq:rank3 q_1}), 
and (2) $y_1^{lt+1}+y_2^{t/p}$ is irreducible in $\ky $, 
since $\gcd (lt+1,t/p)=1$. 
(1) implies that 
$\ol{\sigma }(y_1^{lt+1}+y_2^{t/p})=0$. 
Hence, 
$\ker \ol{\sigma }=(y_1^{lt+1}+y_2^{t/p})$ holds by (2). 
Then, 
$A[g^{\pm 1}]\cap \kx =A[g]$ follows 
from (1) and Lemma~\ref{lem:intersection}. 
\end{proof}

Since $f=x_1x_3-x_2^t$ is irreducible, 
$\kx /f\kx $ is a $k$-domain 
of transcendence degree two. 
In what follows, 
$\ol{u}$ denotes the image of $u\in \kx $ in $\kx /f\kx $. 
Then,

\nd {\bf 15}$^\circ $ we have 
$\ol{q}_1=\ol{x}_1$ by (\ref{eq:rank3 q_1}), 
and $\ol{g}=\ol{x}_1^{mt-1}$ by (\ref{eq:rank3 g}) and 2$^\circ $.

\begin{lem}\label{lem:technical}
If $p\mid t$, then 
there exists $\xi \in q_1k[f,q_1]$ 
such that 
$q_2:=f^{-lp}(q-\xi )$ belongs to $\kx $, 
and $\ol{q}_1=\ol{x}_1$ and $\ol{q}_2$ 
are algebraically independent over $k$. 
\end{lem}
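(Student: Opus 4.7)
The plan is to use $p\mid t$ to reduce $(gh)^{p-1}x_1^m$ modulo a high power of $f$ to a polynomial in $f$ and $x_1$, and then replace $x_1$ by $q_1$ to produce $\xi$.

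\textit{Construction of $\xi$.} Since $p\mid t$ forces $t\ge p$, by $2^\circ$ the term $g^*$ lies in $f^{lp}\kx$ and $f^{lt}x_3\in f^{lp}\kx$, so (\ref{eq:rank3 g}) gives $g\equiv x_1^{mt-1}\pmod{f^{lp}}$. Writing $h=\sum a_{ij}f^ig^j\in k[f,g]$ and substituting, $h\equiv\tilde h(f,x_1)\pmod{f^{lp}}$ for some $\tilde h\in k[f,x_1]$, whence
\[
(gh)^{p-1}x_1^m\equiv P(f,x_1)\pmod{f^{lp}},\qquad P(f,x_1):=\tilde h(f,x_1)^{p-1}x_1^{(mt-1)(p-1)+m}\in x_1^m k[f,x_1].
\]
I then set $\xi := q_1^{mp}-f^{l(p-1)}P(f,q_1)$; since $P(f,0)=0$, we have $\xi\in q_1 k[f,q_1]$.

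\textit{Verification that $q_2\in\kx$.} Expanding $q$ directly yields $q\equiv x_1^{mp}-f^{l(p-1)}P(f,x_1)\pmod{f^{lp}}$. From (\ref{eq:rank3 q_1}) we have $q_1-x_1\in f^{l(p-1)}\kx$; Frobenius then gives $q_1^{mp}-x_1^{mp}=(q_1-x_1)^p S\in f^{lp(p-1)}\kx\subset f^{lp}\kx$, and a Taylor estimate gives $P(f,q_1)-P(f,x_1)\in f^{l(p-1)}\kx$, so $f^{l(p-1)}(P(f,q_1)-P(f,x_1))\in f^{2l(p-1)}\kx\subset f^{lp}\kx$. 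Combining these three, $\xi\equiv q\pmod{f^{lp}}$, and thus $q_2:=f^{-lp}(q-\xi)\in\kx$.

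\textit{Algebraic independence.} Since $q_1-x_1\in f\kx$, $\ol{q_1}=\ol{x_1}$. Dividing the identity
\[
q-\xi = f^{lp}(x_2^p-(gh)^{p-1}x_2)+f^{l(p-1)}(P(f,q_1)-(gh)^{p-1}x_1^m)+(x_1^{mp}-q_1^{mp})
\]
by $f^{lp}$ and reducing modulo $f$, I claim $\ol{q_2}=\ol{x_2}^p-\lambda(\ol{x_1})\ol{x_2}+\mu(\ol{x_1})$ in $\kx/f\kx$ for some $\lambda,\mu\in k[y]$. The first contribution is clear since $(gh)^{p-1}\equiv (x_1^{mt-1}h_0(x_1^{mt-1}))^{p-1}\pmod f$. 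The second and third summands, after division by $f^{lp}$, reduce modulo $f$ to polynomials in $\ol{x_1}$: for $p\ge 3$ this follows from the $f$-order counts $l(p-2)\ge l$ and $lp(p-2)\ge l$; for $p=2$ the direct factorization $x_1^{2m}-q_1^{2m}=-f^{2l}\alpha^2 T$ together with a Taylor expansion of $P(f,q_1)-P(f,x_1)$ give residues lying in $k[\ol{x_1}]$, since $\alpha:=(q_1-x_1)/f^{l(p-1)}$ is itself congruent to a polynomial in $x_1$ modulo $f$ by (\ref{eq:rank3 q_1}), $r\equiv x_1^m\pmod f$, and $h\bmod f\in k[x_1]$. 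Since $\kx/f\kx\cong k[x_1,x_2,x_3]/(x_1x_3-x_2^t)$ is a domain with $\ol{x_1},\ol{x_2}$ algebraically independent over $k$, $\ol{q_2}$ is a polynomial in $\ol{x_2}$ of degree $p\ge 2$ over $k(\ol{x_1})$, hence transcendental over $k(\ol{q_1})$.

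The main obstacle is the residue analysis in the last paragraph: the $f$-adic orders of the two correction terms are tightly balanced against $f^{lp}$, and the $p=2$ case requires explicit factorization rather than an $f$-order bound, because the integer margins $l(p-2)$ and $lp(p-2)$ both degenerate to $0$.
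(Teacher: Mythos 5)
Your proof is correct, and its skeleton coincides with the paper's: your $\xi=q_1^{mp}-f^{l(p-1)}P(f,q_1)$ is literally the paper's choice in its second case of (\ref{eq:rank3 xi}), since writing $(gh)^{p-1}=\eta(g)$ with $\eta\in k[f][T]$ one has $\eta(x_1^{mt-1})x_1^m=P(f,x_1)$; the three-term decomposition of $q-\xi$ and the endgame (exhibit $\ol{q}_2$ as a degree-$p$ polynomial in $\ol{x}_2$ over $k[\ol{x}_1]$ inside the domain $\kx /f\kx$, then conclude transcendence over $k(\ol{x}_1)$) are also the same. Where you genuinely diverge is the control of the two correction terms. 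The paper's Claim~\ref{claim:1} shows each is congruent, modulo $f^{l(p+1)}\kx$, to $f^{lp}\lambda _i$ with $\lambda _i\in x_1R$ for $R=k[x_1,x_2,fx_3,f]$; dividing by $f^{lp}$ then leaves an error in $f^l\kx$ and residues in $\ol{x}_1k[\ol{x}_1,\ol{x}_2]$, uniformly in $p$, and this sharper membership is reused later in the proof of Theorem~\ref{thm:rank3 invariant ring}~(ii). You instead count $f$-adic orders ($lp(p-2)$ and $l(p-2)$ after division), which annihilates both residues for $p\ge 3$ but degenerates at $p=2$, where you patch with the explicit factorizations through $\alpha :=f^{-l(p-1)}(q_1-x_1)$ and the divided difference of $P$ — valid because $\ol{g},\ol{h},\ol{r},\ol{q}_1$ all lie in $k[\ol{x}_1]$, so the residues land in $k[\ol{x}_1]$. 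Both routes prove the lemma; the paper's module $M=f^lx_1R+f^{2l}\kx$ buys uniformity in $p$ and extra information, while yours is more elementary for $p\ge 3$ at the price of a separate $p=2$ computation. One cosmetic remark: the symbol $T$ in your identity $x_1^{2m}-q_1^{2m}=-f^{2l}\alpha ^2T$ is undeclared and collides with the $\Ga$-action parameter; it should be named as the cofactor $\bigl(\sum _{i=0}^{m-1}q_1^ix_1^{m-1-i}\bigr)^2$ (and the sign is irrelevant in characteristic two).
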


First, we prove Theorem~\ref{thm:rank3 invariant ring} (i) 
by assuming this lemma.

\begin{proof}[Proof of Theorem~{\rm \ref{thm:rank3 invariant ring} (i)}]
First, 
we construct $q_3$. 
Let $\xi $ and $q_2$ be as in Lemma~\ref{lem:technical}. 
Since $f^{lp}q_2=q-\xi $ and $\xi \in q_1k[f,q_1]$, 
we see that 
\begin{equation}\label{eq:rank3 lambda}
\lambda :=
q_1^{-1}(q^{t/p}-(f^{lp}q_2)^{t/p})=
q_1^{-1}(q^{t/p}-(q-\xi )^{t/p})\in k[f,q_1,q]. 
\end{equation}
Now, 
set $q_3:=f^{-lt}(g-\lambda )\in 
k[f^{\pm 1},g,q_1,q]$. 
Then, 
we have $g=f^{lt}q_3+\lambda $, 
and 
\begin{align*}
f^{lt+1}+q^{t/p}
\stackrel{\text{(\ref{eq:rank3 q_1})}}{=}
q_1g
=q_1(f^{lt}q_3+\lambda )
\stackrel{\text{(\ref{eq:rank3 lambda})}}{=}
f^{lt}q_1q_3+(q^{t/p}
-(f^{lp}q_2)^{t/p}). 
\end{align*}
This gives that $f=q_1q_3-q_2^{t/p}$. 
We show that $q_3$ is in $\kx $. 
By definition, $q_3$ is in $\kx [f^{-1}]$. 
Moreover, 
$q_1q_3$ is in $\kx $, 
since $q_1q_3=f+q_2^{t/p}$ 
and $f,q_2\in \kx $. 
By (\ref{eq:rank3 q_1}), 
$q_1$ is in $x_1+f\kx $, 
and so $\gcd (q_1,f)=1$. 
Hence, $q_3$ must lie in $\kx $.

Note that $A:=k[q_1,q_2,q_3]
\subset k[f^{\pm 1},g,q,q_1]$ 
by the definition of $q_2$ and $q_3$, 
and that $k[f,g,q,q_1]\subset k[f,q,q_1,q_3]
\subset A[f]=A$, 
since $g=f^{lt}q_3+\lambda \in k[f,q,q_1,q_3]$ 
by (\ref{eq:rank3 lambda}), 
$q=f^{lp}q_2+\xi \in k[f,q_1,q_2]$, 
and $f=q_1q_3-q_2^{t/p}\in A$. 
Hence, 
$k[f^{\pm 1},g,q,q_1]$ is equal to $A[f^{-1}]$. 
Thus, we get $\kx ^{\ep _h}=A[f^{-1}]\cap \kx $ 
by Proposition~\ref{prop:invariant ring first step}.

We show that 
$A[f^{-1}]\cap \kx =A$. 
For $\sigma :\ky \ni \nu (\y )\mapsto \nu (q_1,q_2,q_3)\in \kx $ 
and $\pi :\kx \to \kx /f\kx $, 
we set $\ol{\sigma }=\pi \circ \sigma $. 
Since $\ol{q}_1$ and $\ol{q}_2$ 
are algebraically independent over $k$ by Lemma~\ref{lem:technical}, 
we have $\trd _k\ol{\sigma }(\ky )=2$. 
Then, 
as in the proof of Proposition~\ref{prop:invariant ring first step}, 
the assertion holds by Lemma~\ref{lem:intersection}, 
because (1) $\sigma (y_1y_3-y_2^{t/p})
=q_1q_3-q_2^{t/p}=f\in fA[f]$ 
and 
(2) $y_1y_3-y_2^{t/p}$ is irreducible in $\ky $. 
\end{proof}

Next, 
we prove Lemma~\ref{lem:technical}. 
Set $R:=k[x_1,x_2,fx_3,f]$ 
and $M:=f^lx_1R+f^{2l}\kx $. 
Observe that $x_1,f,g,h,r,q,q_1\in R$ 
and $RM\subset M$. 
Hence, 
by (\ref{eq:rank3 q_1}), 
we see that 
$q_1\in x_1+f^lrR=x_1+f^l(x_1^m+f^lx_2)R\subset x_1+M$. 
Since $x_1M\subset M$, it follows that 

\nd {\bf 16}$^\circ $ 
$q_1^u\in x_1^u+M$ for all $u\ge 1$.

Since $h\in k[f,g]$, 
we can write $(gh)^{p-1}=\eta (g)$, 
where $\eta (T)\in k[f][T]$.

\begin{claim}\label{claim:1}
If $p\mid t$, then 
there exist $\lambda _2,\lambda _3\in x_1R$ such that 
$$
x_1^{mp}-q_1^{mp}\equiv f^{lp}\lambda _2,\ 
f^{l(p-1)}((gh)^{p-1}
x_1^m-\eta (q_1^{mt-1})q_1^m)
\equiv f^{lp}\lambda _3
\pmod{f^{l(p+1)}\kx }.
$$
\end{claim}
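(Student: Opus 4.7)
The plan is to write $q_1=x_1+w$ with $w\in M$, decompose $w=f^lx_1\rho +f^{2l}\tau $ for some $\rho \in R$ and $\tau \in \kx $, and then establish each congruence by an explicit expansion based on this decomposition. The first congruence is a pure Frobenius computation; the second is a first-order Taylor expansion modulo $f^{2l}\kx $ whose validity depends crucially on the hypothesis $p\mid t$.

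For the first congruence, since $p\mid mp$, we have $q_1^{mp}=(q_1^p)^m$. In characteristic $p$, $q_1^p=x_1^p+w^p$, and
$w^p=f^{lp}x_1^p\rho ^p+f^{2lp}\tau ^p\equiv f^{lp}x_1^p\rho ^p\pmod{f^{l(p+1)}\kx }$
because $2lp\ge l(p+1)$. Raising to the $m$th power and again discarding terms that lie in $f^{l(p+1)}\kx $, one gets
$q_1^{mp}\equiv x_1^{mp}(1+f^{lp}\rho ^p)^m\equiv x_1^{mp}+mf^{lp}x_1^{mp}\rho ^p\pmod{f^{l(p+1)}\kx }$. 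Setting $\lambda _2:=-mx_1^{mp}\rho ^p\in x_1R$ yields the required statement.

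For the second congruence, dividing by $f^{l(p-1)}$ (a nonzerodivisor in $\kx $) reduces matters to
$$
(gh)^{p-1}x_1^m-\eta (q_1^{mt-1})q_1^m\equiv f^l\lambda _3\pmod{f^{2l}\kx },
$$
which I attack by a Taylor expansion about $q_1^{mt-1}$. Set $\Delta :=g-q_1^{mt-1}$. The key intermediate computation is $\Delta \equiv -(mt-1)x_1^{mt-2}w\pmod{f^{2l}\kx }$: from 16$^\circ $ together with $M^2\subset f^{2l}\kx $ one obtains $q_1^u\equiv x_1^u+ux_1^{u-1}w\pmod{f^{2l}\kx }$ for each $u\geq 1$, while $g-x_1^{mt-1}=f^{lt}x_3+g^*$ lies in $f^{2l}\kx $ because $f^{lt}x_3\in f^{2l}\kx $ (as $t\geq 2$) and, under the hypothesis $p\mid t$, 2$^\circ $ gives $g^*\in (f^lx_2)^px_1k[(f^lx_2)^p,x_1]\subset f^{2l}\kx $. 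Since $\Delta \in f^l\kx $, also $\Delta ^2\in f^{2l}\kx $, so the Taylor expansion of $\eta (T)\in k[f][T]$ truncates to $\eta (g)\equiv \eta (q_1^{mt-1})+\eta '(q_1^{mt-1})\Delta \pmod{f^{2l}\kx }$.

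Combining these ingredients and using $\eta (g)=(gh)^{p-1}$ gives
$$
(gh)^{p-1}x_1^m-\eta (q_1^{mt-1})q_1^m\equiv -wx_1^{m-1}\bigl[(mt-1)\eta '(q_1^{mt-1})x_1^{mt-1}+m\eta (q_1^{mt-1})\bigr]\pmod{f^{2l}\kx }.
$$
Replacing $w$ by $f^lx_1\rho $ (valid modulo $f^{2l}\kx $) produces the required $f^l\lambda _3$ with
$$
\lambda _3:=-x_1^m\rho \bigl[(mt-1)\eta '(q_1^{mt-1})x_1^{mt-1}+m\eta (q_1^{mt-1})\bigr],
$$
which lies in $x_1R$ because $\eta (T),\eta '(T)\in k[f][T]\subset R[T]$ and $q_1,x_1\in R$. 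The main technical obstacle is pinning down $g-x_1^{mt-1}$ modulo $f^{2l}\kx $: without the assumption $p\mid t$, the linear-in-$(f^lx_2)$ term of $g^*$ would survive, $\Delta $ would fail to be congruent to $-(mt-1)x_1^{mt-2}w$ modulo $f^{2l}\kx $, and the first-order Taylor expansion would not be sharp enough to isolate a clean principal term.
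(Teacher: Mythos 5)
Your proof is correct and rests on the same foundation as the paper's: the decomposition $q_1-x_1\in M=f^lx_1R+f^{2l}\kx$ coming from 16$^\circ$, the fact that $p\mid t$ forces $g^*\in f^{2l}\kx$ via 2$^\circ$, and the Frobenius. The only real difference is cosmetic: where the paper tracks membership in $M$ via the telescoping identity $\eta (g)x_1^m-\eta (q_1^{mt-1})q_1^m=\eta (g)(x_1^m-q_1^m)+(\eta (g)-\eta (q_1^{mt-1}))q_1^m$ and the closure $MR\subset M$, you carry out a first-order expansion and exhibit $\lambda _2$ and $\lambda _3$ explicitly.
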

\begin{proof}
Since $x_1^m-q_1^m\in M$ by 16$^\circ $, 
we have 
$x_1^{mp}-q_1^{mp}=(x_1^m-q_1^m)^p\in (f^lx_1)^pR+f^{2lp}\kx $. 
Hence, there exists $\lambda _2$ as claimed. 
Since $p\mid t$, 
we have $g\in x_1^{mt-1}+M$ by 2$^\circ $. 
Hence, 
$g-q_1^{mt-1}$ lies in $M$ by 16$^\circ $. 
Since $\eta (T)$ is in $k[f][T]$, 
it follows that $\eta (g)-\eta (q_1^{mt-1})\in 
(g-q_1^{mt-1})k[f,g,q_1]\subset MR\subset M$. 
Thus, we get 
$$
\eta (g)x_1^m-\eta (q_1^{mt-1})q_1^m
=\eta (g)(x_1^m-q_1^m)
+(\eta (g)-\eta (q_1^{mt-1}))q_1^m\in M. 
$$
Since $\eta (g)=(gh)^{p-1}$, 
this shows that 
$f^{l(p-1)}((gh)^{p-1}x_1^m-\eta (q_1^{mt-1})q_1^m)$ 
belongs to 
$f^{l(p-1)}M=f^{lp}x_1R+f^{l(p+1)}\kx $. 
Therefore, 
there exists $\lambda _3$ as claimed. 
\end{proof}

\begin{proof}[Proof of Lemma~$\ref{lem:technical}$]
We show that the assertion holds for 
\begin{equation}\label{eq:rank3 xi}
\xi :=\left\{ \begin{array}{ll}
q_1^{mp}&\text{ (1) if }h^{p-1}\in f^lk[f,g] \\
q_1^{mp}-f^{l(p-1)}\eta (q_1^{mt-1})q_1^m&\text{ (2) otherwise. }
\end{array}
\right.
\end{equation}
First, observe that 
$q=f^{lp}(x_2^p-(gh)^{p-1}x_2)+x_1^{mp}-f^{l(p-1)}(gh)^{p-1}x_1^m$ 
by (\ref{eq:rank3 q}).

(1) 
We can write $f^{l(p-1)}(gh)^{p-1}x_1^m=f^{lp}\lambda _1$, 
where $\lambda _1\in x_1^mk[f,g]\subset x_1R$. 
Let $\lambda _2\in x_1R$ be as in Claim~\ref{claim:1}. 
Then, 
$x_1^{mp}-q_1^{mp}$ is in $f^{lp}\lambda _2+f^{l(p+1)}\kx $. 
Hence, 
$q_2=f^{-lp}(q-\xi )=f^{-lp}(q-q_1^{mp})$ is in 
$x_2^p-(gh)^{p-1}x_2+\lambda _2-\lambda _1+f^l\kx \subset \kx $. 
This also shows that 
$\ol{q}_2
=\ol{x_2^p+\lambda }$ 
for some $\lambda \in x_1k[x_1,x_2]$, 
since $\ol{g}=\ol{x}_1^{mt-1}$ by 15$^\circ $, 
$h\in R$, 
$\lambda _1,\lambda _2\in x_1R$, 
and the image of $R$ in $\kx /f\kx $ is $k[\ol{x}_1,\ol{x}_2]$. 
Since $x_2^p+\lambda $ is in $k[x_1,x_2]\sm k[x_1]$, 
this implies that $\ol{x}_1$ and $\ol{q}_2$ are algebraically 
independent over $k$, 
for otherwise $\nu (x_1,x_2^p+\lambda )\in f\kx $ for some 
$\nu \in k[x_1,x_2]\sm \zs $, 
which is absurd.

(2) 
We have $q_2=f^{-lp}(q-\xi )\in 
x_2^p-(gh)^{p-1}x_2+\lambda _2-\lambda _3+f^l\kx $, 
where 
$\lambda _2,\lambda _3\in x_1R$ are as in Claim~\ref{claim:1}. 
Then, 
the assertion is verified as in (1). 
\end{proof}

\begin{proof}[Proof of Theorem~{\rm \ref{thm:rank3 invariant ring} (ii)}]
First, we remark that 
$\psi (x_2^p)=\psi (f^{-lp}(r^p-x_1^{mp}))
=f^{-lp}(q-q_1^{mp})$ 
by 1$^\circ $, 
since $\psi (x_1)=q_1$ as mentioned.

If $h^{p-1}\in f^lk[f,g]$, 
then by (\ref{eq:rank3 xi}), 
we may take $\xi =q_1^{mp}$ 
in the proof of (i). 
Then, 
we have 
$q_2=f^{-lp}(q-q_1^{mp})$, 
which equals $\psi (x_2^p)$ as remarked. 
Hence, we get 
$f=\psi (f)=\psi (x_1x_3-x_2^{(t/p)p})
=q_1\psi (x_3)-q_2^{t/p}$. 
Since $q_1q_3-q_2^{t/p}=f$, 
this gives that $\psi (x_3)=q_3$. 
Thus, we get 
$\psi (k[x_1,x_2^p,x_3])=k[q_1,q_2,q_3]=\kx ^{\ep _h}$ by (i).

For the converse, 
assume that $h^{p-1}\not\in f^lk[f,g]$. 
It suffices to prove $\psi (x_2^p)\not\in \kx $. 
Set $\lambda _1:=f^{-l}(gh)^{p-1}x_1^m$, 
and let $\lambda _2$ be as in Claim~\ref{claim:1}. 
Then, 
$f^{-lp}(q-q_1^{mp})$ is in 
$x_2^p-(gh)^{p-1}x_2+\lambda _2-\lambda _1+f^l\kx $ 
as shown in (1) of the proof of Lemma~\ref{lem:technical}. 
Moreover, 
$x_2^p-(gh)^{p-1}x_2+\lambda _2$ is in $\kx $. 
Hence, we get $\psi (x_2^p)\in -\lambda _1+\kx $ by the remark. 
Now, we claim that $f^{-l}h^{p-1}\not\in \kx $, 
for otherwise 
$f^{-l}h^{p-1}\in k[f^{\pm },g]\cap \kx =k[f,g]$ 
by (ii) of 
the proof of Theorem~\ref{thm:k[f,g]}, 
and so $h^{p-1}\in f^lk[f,g]$, a contradiction. 
Since $\gcd (f,g)=1$ by 4$^\circ $, 
it follows that $\lambda _1\not\in \kx $. 
This proves that $\psi (x_2^p)\not\in \kx $. 
\end{proof}

\begin{rem}\label{rem:t=p, f q_2}\rm 
If $t=p$, then the following statements hold. 

\nd {\rm (i)} 
Since $q_1q_3-f=q_2^{t/p}=q_2$, 
we have $\kx ^{\ep _h}=k[q_1,q_2,q_3]=k[f,q_1,q_3]$. 

\nd {\rm (ii)} 
$\lambda $ in (\ref{eq:rank3 lambda}) is equal to $\xi q_1^{-1}$. 
Hence, we have $q_3=f^{-lt}(g-\xi q_1^{-1})$. 
\end{rem}

\begin{example}\label{example:simple example}\rm 
Let $(l,t)=(1,p)$, 
and $m=2$ if $p=2$, 
and $m=1$ if $p\ge 3$. 
Put $s:=mt-1$, 
i.e., 
$s=3$ if $p=2$, 
and $s=p-1$ if $p\ge 3$. 
For $h:=f$, 
we set $\phi :=\ep _h$. 
Then, 
we have $f=x_1x_3-x_2^p$, 
$g=f^px_3+x_1^s$ by 2$^\circ $, 
\begin{equation}\label{eq:simple example}
\begin{gathered}
\phi (x_1)=x_1+f^{2p}g^{p-1},\quad 
\phi (x_2)=x_2+fg-
\left\{ \begin{array}{cc}
f^7g^2 & \text{if}\ p=2 \\
f^{2p-1}g^{p-1}&\text{if}\ p\ge 3
\end{array}\right.
\\
\phi (x_3)=x_3-f^{-p}\bigl( (x_1+f^{2p}g^{p-1})^{s}-x_1^{s} \bigr)
\end{gathered}
\end{equation}
by Lemma~\ref{lem:rank3 J} (iii), 
$\gamma (\kx ^{\phi })=0$ by Corollary~\ref{cor:rank3} (2), 
and $\pl (\phi )=fg\kx ^{\phi }$ 
by Theorem~\ref{thm:plinth rank 3} (ii). 
Since $h^{p-1}=f^{p-1}$ is in $fk[f,g]$, 
we may take $\xi =q_1^{s+1}$ by (\ref{eq:rank3 xi}), 
where $q_1:=g^{-1}(f^{p+1}+q)$. 
Then, 
we have $q_3:=f^{-p}(g-q_1^{s})$ and 
$\kx ^{\phi }=k[f,q_1,q_3]$ by Remark~\ref{rem:t=p, f q_2}. 
We also have $\psi (k[x_1,x_2^p,x_3])=\kx ^{\phi }$ 
by Theorem~\ref{thm:rank3 invariant ring} (ii). 
\end{example}

\subsection{Proof of Theorem~\ref{thm:rank3 invariant ring2}}
\label{sect:rank3 invariant proof2}

The goal of \S \ref{sect:rank3 invariant proof2} 
is to prove Theorem~\ref{thm:rank3 invariant ring2}. 
Throughout, 
we assume that $p\nmid t$, $p\nmid mt-1$ 
and $h^{p-1}\in f^{l+1}g^2k[f,g]\sm \zs $. 
First, we note the following:

\nd {\bf 17}$^\circ $ 
$q=r^p-(f^lgh)^{p-1}r$ belongs to 
$r^p+f^{lp+1}g^{p+1}\kx \subset r^p+f^{lp+1}g\kx $.

\nd {\bf 18}$^\circ $ 
By 1$^\circ $ and 2$^\circ $, 
we can write 
$x_1^p=g^{-p}\eta _1(r^p)$ 
and $x_3^p=f^{-ltp}\eta _3(x_1^p,x_2^p)$, 
where $\eta _1(T)\in k[f,T]$ 
and $\eta _3(T,U)\in k[f,g,T,U]$. 
We also have $x_2^p=f^{-lp}(r^p-x_1^{mp})$.

\begin{lem}\label{lem:rank3 invariant ring2}
\nd{\rm (i)} 
$p_i:=\psi (x_i^p)$ belongs to $x_i^p+fg\kx $ for $i=1,2,3$. 

\nd{\rm (ii)} 
We have $\kx ^{\ep _h}=C[\bp ]\cap \kx $, 
where $\bp :=\{ p_1,p_2,p_3\} $. 

\end{lem}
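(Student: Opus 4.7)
The plan is as follows. Part (i) I will establish for $i = 1, 2$ by direct computation using the identities of $18^\circ$ and the key input from $17^\circ$, and for $i = 3$ by combining the Frobenius identity $x_1^p x_3^p = f^p + (x_2^p)^t$ with a coprimality argument in $\kx$. Part (ii) will follow at once from the fact that $\psi$ is a $C$-algebra isomorphism sending $x_i^p \mapsto p_i$ together with the identity $C[r^p] = C[x_1^p, x_2^p, x_3^p]$.

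For $i = 1$, applying $\psi$ to $g^p x_1^p = f^{p(lt+1)} + r^{pt}$ (the $p$-th power of the formula in $18^\circ$) gives $g^p p_1 = f^{p(lt+1)} + q^t$, and subtracting yields
\[
g^p(p_1 - x_1^p) \;=\; q^t - r^{pt} \;=\; (q - r^p)\sum_{i=0}^{t-1}q^{t-1-i}r^{pi},
\]
which lies in $f^{lp+1}g^{p+1}\kx$ by $17^\circ$; dividing by $g^p$ gives $p_1 - x_1^p \in f^{lp+1}g\kx \subset fg\kx$. Case $i = 2$ is analogous: $f^{lp}(p_2 - x_2^p) = (q - r^p) - (p_1^m - x_1^{mp})$, where the second term lies in $(p_1 - x_1^p)\kx \subset f^{lp+1}g\kx$, so $p_2 - x_2^p \in fg\kx$. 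For $i = 3$, the $p$-th power of $x_1 x_3 = f + x_2^t$ in characteristic $p$ is $x_1^p x_3^p = f^p + (x_2^p)^t$, and applying $\psi$ yields $p_1 p_3 = f^p + p_2^t$; subtracting $p_1 x_3^p$ and using $x_1^p x_3^p = f^p + x_2^{pt}$ gives
\[
p_1(p_3 - x_3^p) \;=\; (p_2^t - x_2^{pt}) - (p_1 - x_1^p)x_3^p,
\]
and both summands lie in $fg\kx$ by the previous cases (noting $p_2^t - x_2^{pt} \in (p_2 - x_2^p)\kx$).

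The main obstacle is passing from $p_1(p_3 - x_3^p) \in fg\kx$ to $p_3 - x_3^p \in fg\kx$, which requires first knowing $p_3 \in \kx$. I handle this by a coprimality argument: from $p_1 - x_1^p \in f^{lp+1}g\kx$ we have $p_1 \equiv x_1^p$ modulo each of the irreducibles $f$ and $g$, and since $x_1$ is coprime to $f = x_1 x_3 - x_2^t$ and to $g$ (irreducible by Lemma~\ref{lem:K_g}), we conclude $\gcd(p_1, fg) = 1$ in $\kx$. Now $p_3 \in C[q] \subset \kx[(fg)^{-1}]$ while $p_1 p_3 = f^p + p_2^t \in \kx$; writing $p_3 = R/(fg)^n$ in lowest terms, the equation $p_1 R = (fg)^n(f^p + p_2^t)$ and coprimality force $n = 0$, so $p_3 \in \kx$, and the same coprimality then upgrades $p_1(p_3 - x_3^p) \in fg\kx$ to $p_3 - x_3^p \in fg\kx$. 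For part (ii), by (\ref{eq:C[r]^{ep_h}}) we have $\kx^{\ep_h} = C[q] \cap \kx$, and since $\psi$ is a $C$-algebra isomorphism with $\psi(x_i^p) = p_i$ the claim reduces to $C[r^p] = C[x_1^p, x_2^p, x_3^p]$: the inclusion $\supset$ holds because each $x_i^p \in C[r^p]$ by $18^\circ$, while $\subset$ holds because $r^p = (f^l x_2 + x_1^m)^p = f^{lp} x_2^p + x_1^{mp} \in C[x_1^p, x_2^p]$ by Frobenius.
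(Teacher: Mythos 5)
Your proposal is correct and follows essentially the same route as the paper: cases $i=1,2$ use $17^\circ$ and $18^\circ$ exactly as in the paper's proof (your explicit factorization of $q^t-r^{pt}$ is just the paper's $\eta_1(q)-\eta_1(r^p)$ written out), and for $i=3$ you use the relation $p_1p_3=f^p+p_2^t$ together with $\gcd(p_1,fg)=1$, which is the same coprimality argument the paper phrases by working in the domains $\kx/f\kx$ and $\kx/g\kx$. Part (ii) likewise matches the paper's observation that $q\in C[\bp]$, so $C[\bp]=C[q]$ and (\ref{eq:C[r]^{ep_h}}) applies.
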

\begin{proof}
(i) 
We have $\psi (x_1^p)=g^{-p}\eta _1(q)$ by 18$^\circ $, 
and 
$\eta _1(q)\in \eta _1(r^p)+f^{lp+1}g^{p+1}\kx $ by 17$^\circ $. 
Hence, we get 
$\psi (x_1^p)\in g^{-p}\eta _1(r^p)+f^{lp+1}g\kx 
=x_1^p+f^{lp+1}g\kx \subset x_1^p+fg\kx $ 
by 18$^\circ $. 
From this and 17$^\circ $, 
we have 
$q-\psi (x_1^p)^m\in r^p-x_1^{mp}+f^{lp+1}g\kx $. 
Hence, 
we get 
$\psi (x_2^p)=f^{-lp}(q-\psi (x_1^p)^m)
\in f^{-lp}(r^p-x_1^{mp})+fg\kx =x_2^p+fg\kx $ 
by 18$^\circ $.

Since $\psi (x_1^p),\psi (x_2^p)\in \kx $, 
we have 
$\psi (x_3^p)=f^{-ltp}\eta _3(\psi (x_1^p),\psi (x_2^p))\in 
f^{-ltp}\kx $ by 18$^\circ $, 
and $\psi (x_1^p)\psi (x_3^p)
=\psi (x_1^px_3^p)
=\psi (x_2^{tp}+f^p)
=\psi (x_2^p)^t+f^p\in \kx $. 
We also have $\gcd (f,\psi (x_1^p))=1$, 
since $\psi (x_1^p)\in x_1^p+fg\kx $. 
Thus, 
$\psi (x_3^p)$ belongs to $\kx $.

In $\kx /g\kx $, 
the equation 
$\psi (x_1^p)\psi (x_3^p)-\psi (x_2^p)^t
=\psi (f^p)=f^p=x_1^px_2^p-x_2^{tp}$ 
yields $\ol{x_1^p(\psi (x_3^p)-x_3^p)}=\ol{0}$, 
since $\ol{\psi (x_i^p)}=\ol{x_i^p}$ for $i=1,2$ as shown above. 
Since $\ol{x}_1\ne \ol{0}$ and 
$\kx /g\kx $ is a domain by Lemma~\ref{lem:K_g}, 
it follows that $\ol{\psi (x_3^p)-x_3^p}=\ol{0}$, i.e., 
$\psi (x_3^p)-x_3^p\in g\kx $. 
We can prove $\psi (x_3^p)-x_3^p\in f\kx $ similarly. 
Therefore, 
$\psi (x_3^p)-x_3^p$ belongs to 
$f\kx \cap g\kx =fg\kx $ by 4$^\circ $.

(ii) 
From $p_2=f^{-lp}(q-p_1^m)$, 
we see that $q$ is in $C[\bp ]$. 
Since $C[\bp ]\subset C[q]$, 
we get $C[\bp ]=C[q]$. 
Then, 
the assertion follows from 
(\ref{eq:C[r]^{ep_h}}). 
\end{proof}

For $f_1,f_2\in \kx $, 
we define the {\it Jacobian derivation} 
$D_{(f_1,f_2)}:\kx \to \kx $ by 
$D_{(f_1,f_2)}(f_3):=\det (\partial f_i/\partial x_j)_{i,j}$ 
for each $f_3\in \kx $.

\begin{lem}\label{lem:derivation}
We have $D_{(x_2,f)}(g)\not\in f\kx $ 
and $D_{(x_1,g)}(f)\not\in g\kx $. 
\end{lem}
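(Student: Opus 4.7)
The plan is to compute both Jacobian derivations in closed form and then read off the non-vanishing from a single reduction. The guiding identity is $x_1 g = f^{lt+1} + r^t$ (which is 1$^\circ$, since $g = x_1^{-1}(f^{lt+1} + r^t)$). Differentiating this with respect to $x_j$ for $j=1,2,3$ gives, writing $g_j = \partial g/\partial x_j$,
\[
x_1 g_1 = -g + (lt+1)f^{lt}x_3 + tr^{t-1}r_1,\qquad
x_1 g_j = (lt+1)f^{lt}f_j + tr^{t-1}r_j \text{ for } j=2,3,
\]
where $f_1=x_3$, $f_2=-tx_2^{t-1}$, $f_3=x_1$, and (using $r = f^l x_2 + x_1^m$) $r_1 = lf^{l-1}x_2x_3 + mx_1^{m-1}$, $r_2 = f^{l-1}(f - lt x_2^t)$, $r_3 = lf^{l-1}x_1x_2$.

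Next, expanding the Jacobian determinants along their first rows gives $D_{(x_2,f)}(g) = x_1 g_1 - x_3 g_3$ and $D_{(x_1,g)}(f) = x_1 g_2 + tx_2^{t-1} g_3$. Multiplying each by $x_1$ and substituting the expressions above, I expect the $(lt+1)f^{lt}$–terms and the $tlr^{t-1}f^{l-1}$–terms to cancel pairwise (this is essentially the classical identity that the Jacobian of $(x_i, f, g)$ with respect to $(x_1,x_2,x_3)$ is linear in the entries of the other two rows). The anticipated closed forms are
\[
D_{(x_1,g)}(f) = tr^{t-1}f^l, \qquad D_{(x_2,f)}(g) = -g + tmr^{t-1}x_1^{m-1}.
\]
Both expressions are divisible by $x_1$ after the multiplication, and $x_1$ can be canceled since $\kx$ is a domain.

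With these formulas in hand, the two non-inclusions are immediate. For $D_{(x_1,g)}(f) = tr^{t-1}f^l$: by Lemma~\ref{lem:K_g}, $g$ is irreducible so $\kx/g\kx$ is a domain, and the proof of that lemma showed $\ol{f}, \ol{r}$ are nonzero in $\kx/g\kx$ (in fact $Q(\kx/g\kx) = k(\ol{f},\ol{r},\ol{x}_1)$); since also $t\neq 0$ in $k$ by $p\nmid t$, the product $tr^{t-1}f^l$ is nonzero mod $g$. For $D_{(x_2,f)}(g)$: reduce modulo $f$, using $r \equiv x_1^m$ and $g \equiv x_1^{mt-1} \pmod{f}$ (from 2$^\circ$), to get $D_{(x_2,f)}(g) \equiv -x_1^{mt-1} + tm x_1^{mt-1} = (mt-1)x_1^{mt-1} \pmod f$; this is nonzero in the domain $\kx/f\kx$ since $p\nmid mt-1$ and $\ol{x}_1 \neq 0$.

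The main obstacle is the bookkeeping in the cancellation: writing out $x_1 D_{(x_2,f)}(g)$ and $x_1 D_{(x_1,g)}(f)$ produces several terms of degree $l$ or $lt$ in $f$, and one must carefully pair $(lt+1)f^{lt}\,f_j$ against $(lt+1)f^{l-1}f^{lt-l+1}$ coming from $g_3$, and analogously for the $tlr^{t-1}$ pieces, to see that only the terms $-x_1 g$ (from the extra $-g$ in the $j=1$ equation) and the $r^{t-1} f^l$ piece (from the identity $f - ltx_2^t + ltx_2^t = f$ in the $j=2$ case) survive. Once this cancellation pattern is identified the rest is routine.
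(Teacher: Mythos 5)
Your proposal is correct, and it lands on exactly the same closed forms as the paper, namely $D_{(x_1,g)}(f)=tr^{t-1}f^l$ and $D_{(x_2,f)}(g)=mtr^{t-1}x_1^{m-1}-g$, with the identical endgame (reduce mod $f$ using $r\equiv x_1^m$, $g\equiv x_1^{mt-1}$ and $p\nmid mt-1$; observe nonvanishing mod $g$ from $p\nmid t$ and $\gcd(f,g)=1$). The cancellations you only ``anticipate'' do in fact hold: multiplying your row expansions by $x_1$, the $f^{lt}$-terms carry the factors $x_1f_2+tx_2^{t-1}f_3=0$ and $x_1x_3-x_3f_3=0$, while the $tr^{t-1}$-terms carry $x_1r_2+tx_2^{t-1}r_3=x_1f^l$ and $x_1r_1-x_3r_3=mx_1^m$, after which $x_1$ cancels in the domain $\kx$. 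The difference from the paper is only in how the closed forms are reached: the paper exploits the derivation structure directly, noting that $D:=D_{(x_2,f)}$ kills $k[x_2,f]$ and satisfies $D(x_1)=x_1$, so that applying $D$ to $x_1g=f^{lt+1}+r^t$ gives $D(g)$ in two lines; and for the second assertion it uses the antisymmetry $D_{(x_1,g)}(f)=-D_{(x_1,f)}(g)$ to work with $E:=D_{(x_1,f)}$, which kills $k[x_1,f]$ and makes $E(g)=-tr^{t-1}f^l$ immediate. Your route trades that structural shortcut for explicit implicit differentiation and determinant bookkeeping; it is longer but requires no insight beyond the defining relation $x_1g=f^{lt+1}+r^t$. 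One minor imprecision: you attribute $\ol{r}\ne 0$ in $\kx/g\kx$ to the proof of Lemma~\ref{lem:K_g}, which only records $\ol{f},\ol{x}_1\ne 0$; but $\ol{r}\ne 0$ follows at once since $\ol{r}=0$ would force $\ol{f}^{\,lt+1}=\ol{x_1g}=0$, contradicting $\gcd(f,g)=1$.
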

\begin{proof}
Set $D:=D_{(x_2,f)}$. 
Then, 
$\ker D$ contains $k[x_2,f]$. 
We also have 
$D(x_1)=D_{(x_2,f)}(x_1)=-D_{(x_2,x_1)}(f)=D_{(x_1,x_2)}(f)=\partial f/\partial x_3=x_1$. 
Hence, 
we get 
$D(r)=D(f^lx_2+x_1^m)=mx_1^{m-1}D(x_1)=mx_1^m$. 
Since $x_1g=f^{lt+1}+r^t$ by (\ref{eq:rank3 g}), 
it follows that 
$x_1D(g)+x_1g=
x_1D(g)+D(x_1)g=D(x_1g)=D(f^{lt+1}+r^t)=tr^{t-1}D(r)=mtr^{t-1}x_1^m$. 
This gives that 
$D(g)=mtr^{t-1}x_1^{m-1}-g$. 
Now, recall that $r\in x_1^m+f\kx $, 
and $g\in x_1^{mt-1}+f\kx $ by 2$^\circ $. 
Hence, 
$D(g)$ is in $(mt-1)x_1^{mt-1}+f\kx $. 
Since $p\nmid mt-1$ by assumption, 
we know that $D(g)\not\in f\kx $.

Next, 
set $E:=D_{(x_1,f)}$. 
Since $D_{(x_1,g)}(f)=-E(g)$, 
we show that $E(g)\not\in g\kx $. 
Since $k[x_1,f]\subset \ker E$ and $E(x_2)=-D_{(x_1,x_2)}(f)=-x_1$, 
we have 
$E(r)=E(f^lx_2+x_1^m)=f^lE(x_2)=-f^lx_1$, 
and 
$x_1E(g)=E(x_1g)=
E(f^{lt+1}+r^t)=tr^{t-1}E(r)
=-tr^{t-1}f^lx_1$. 
Since $p\nmid t$ by assumption, 
the last equation implies 
$E(g)\not\in g\kx $. 
\end{proof}

Let $\ol{k}$ be an algebraic closure of $k$, 
and $F:\ol{k}\ni a\mapsto a^p\in \ol{k}$. 
Then, 
$\varphi :=F\otimes \id _{\ol{k}[\x ]}$ 
is an automorphism of $\ol{k}\otimes _{\ol{k}}\ol{k}[\x ]
=\ol{k}[\x ]$ over $\bF _p[\x ]$. 
We define 
$$
\tau :k[\x ,y,z]
\ni u(x_1,x_2,x_3,y,z)\mapsto u(x_1^p,x_2^p,x_3^p,f,g)\in \kx .
$$

\begin{rem}\label{rem:Frob}\rm 
(i) $\tau (u)=u^p$ holds for all $u\in \bF _p[\x ]$, 
and hence for $u=f,g$. 

\nd (ii) For each $\lambda \in \kx $, 
we have $\tau (\lambda )=\varphi ^{-1}(\lambda )^p$. 
Hence, if 
$u\in \bF _p[\x ]$ is irreducible in $\ol{k}[\x ]$, 
then the following implication holds: 
$\tau (\lambda )\in u\kx \Rightarrow 
 \varphi ^{-1}(\lambda )\in u\ol{k}[\x ]
\Rightarrow 
\lambda =\varphi (\varphi ^{-1}(\lambda ))\in 
\varphi (u\ol{k}[\x ])=u\ol{k}[\x ]
\Rightarrow \lambda /u\in \ol{k}[\x ]\cap \kxr =\kx 
\Rightarrow \lambda \in u\kx $. 
\end{rem}

\begin{lem}\label{lem:derivation2}

\nd{\rm (i)} 
If $\lambda \in k[\x ,y]$ satisfies 
$\deg _y\lambda <p$ and 
$\tau (\lambda )\in g\kx $, 
then $\lambda $ belongs to $ gk[\x ,y]$.

\nd{\rm (ii)} 
If $\lambda \in k[\x ,z]$ satisfies 
$\deg _z\lambda <p$ and 
$\tau (\lambda )\in f\kx $, 
then $\lambda $ belongs to $fk[\x ,z]$. 

\end{lem}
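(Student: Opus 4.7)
The plan is to prove (i) and (ii) by parallel inductions on $d=\deg_y\lambda$ (respectively $\deg_z\lambda$), reducing the degree by one via a Jacobian derivation that annihilates $g$ (respectively $f$) together with every $p$-th power $x_j^p$, and closing the base case via the Frobenius argument recorded in Remark~\ref{rem:Frob}(ii).

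For (i), write $\lambda=\sum_{i=0}^{d}\lambda_i(\x)y^i$ with $\lambda_i\in k[\x]$ and $d<p$, set $\lambda_i^{[p]}:=\lambda_i(x_1^p,x_2^p,x_3^p)$, and take $D':=D_{(x_1,g)}$. By construction $D'$ kills $x_1$ and $g$, and because $\ch k=p$ it also kills every $x_j^p$, so $D'(\lambda_i^{[p]})=0$ for all $i$. Writing $\tau(\lambda)=g\mu$ for some $\mu\in\kx$, the identity $D'(g\mu)=gD'(\mu)$ together with the Leibniz expansion of $D'(\tau(\lambda))$ yields
\[
D'(f)\sum_{i=1}^{d}i\,\lambda_i^{[p]}f^{i-1}\in g\kx.
\]
Lemma~\ref{lem:derivation} gives $D'(f)\notin g\kx$, and $g$ is irreducible in $\kx$ by Lemma~\ref{lem:K_g}, so cancelling $D'(f)$ produces $\tau(\lambda')\in g\kx$ for $\lambda':=\sum_{j=0}^{d-1}(j+1)\lambda_{j+1}y^j$. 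Since $\deg_y\lambda'=d-1$, the induction hypothesis gives $\lambda'\in gk[\x,y]$, and as each of $1,\ldots,d<p$ is a unit in $k$, it follows that $\lambda_i\in gk[\x]$ for $i=1,\ldots,d$. Consequently $\tau(\lambda_0)=\tau(\lambda)-\sum_{i=1}^{d}\lambda_i^{[p]}f^i\in g\kx$, and since $g\in\bF_p[\x]$ is irreducible in $\ol k[\x]$ (the proof of Lemma~\ref{lem:K_g} applies verbatim over $\ol k$), Remark~\ref{rem:Frob}(ii) forces $\lambda_0\in g\kx\cap k[\x]=gk[\x]$; the same Frobenius step settles the base case $d=0$.

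Case (ii) is the mirror image: take $D'':=D_{(x_2,f)}$, which kills $x_2$, $f$, and every $x_j^p$. Lemma~\ref{lem:derivation} supplies $D''(g)\notin f\kx$, and $f=x_1x_3-x_2^t$ is irreducible in $\ol k[\x]$ (linear and primitive in $x_3$ over $\ol k[x_1,x_2]$), so the same induction and Frobenius closure go through verbatim.

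The pivotal step I expect is the isolation and cancellation of $D'(f)$ (respectively $D''(g)$): the non-vanishing assertions of Lemma~\ref{lem:derivation} are precisely what makes the degree reduction possible, and without them the induction collapses. Everything else—the vanishing of $D'$ on the $p$-th powers and the Frobenius descent for the base case—is routine once those two facts are in hand.
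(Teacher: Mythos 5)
Your proof is correct and follows essentially the same route as the paper's: both hinge on the Jacobian derivation $D_{(x_1,g)}$ (resp.\ $D_{(x_2,f)}$) killing $g$ and all $p$-th powers, the non-vanishing $D_{(x_1,g)}(f)\notin g\kx$ from Lemma~\ref{lem:derivation} to cancel and drop the $y$-degree, and Remark~\ref{rem:Frob}~(ii) for the constant term. The only difference is organizational---you run a clean induction on $\deg_y\lambda$ where the paper argues by a minimal counterexample---which is immaterial.
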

\begin{proof}
(i) 
Recall that $g$ is in $\bF _p[\x]$, 
and is irreducible in $\ol{k}[\x ]$, 
since Lemma~\ref{lem:K_g} holds for any $k$. 
Now, suppose that 
there exists $\lambda \in k[\x ,y]\sm gk[\x ,y]$ with 
$\deg _y\lambda <p$ and $\tau (\lambda )\in g\kx $. 
Write $\lambda =\sum _{i=0}^{p-1}\lambda _iy^i$, 
where $\lambda _i\in \kx $. 
For $i$ with $\lambda _i\in g\kx $, 
we have $\tau (\lambda _i)\in \tau (g\kx )\subset g^p\kx $ 
by Remark~\ref{rem:Frob} (i). 
Hence, 
by subtracting $\lambda _iy^i$ from $\lambda $ for such $i$, 
we may assume that 
$\lambda =\sum _{i=0}^s\lambda _iy^i$ 
and $\lambda _s\not\in g\kx $ 
for some $0\le s<p$. 
Choose $\lambda $ with least $s$. 
Then, we claim that $s\ge 1$, 
for otherwise 
$\tau (\lambda _0)=\tau (\lambda )\in g\kx $ 
and $\lambda _0\not\in g\kx $, 
contradicting Remark~\ref{rem:Frob} (ii).

Set $\lambda ':=\sum _{i=0}^si\lambda _iy^{i-1}$. 
Since $1\le s<p$, 
we have $\deg _y\lambda '=s-1$ and $s\lambda _s\not\in g\kx $. 
Hence, $\tau (\lambda ')$ is not in $g\kx $ 
by the minimality of $s$. 
Now, set $D:=D_{(x_1,g)}$. 
Then, 
since $\tau (\lambda )\in g\kx $ and $D(g)=0$, 
we have $D(\tau (\lambda ))\in D(g\kx )\subset g\kx $. 
On the other hand, 
since $D$ kills $\tau (\lambda _i)
=\lambda _i(x_1^p,x_2^p,x_3^p)$, 
we have 
$D(\tau (\lambda ))
=D(\sum _{i=0}^s\tau (\lambda _i)f^i)
=\sum _{i=0}^s\tau (\lambda _i)D(f^i)
=\sum _{i=0}^si\tau (\lambda _i)f^{i-1}D(f)
=\tau (\lambda ')D(f)$. 
Since $D(f)\not\in g\kx $ by Lemma~\ref{lem:derivation}, 
it follows that $\tau (\lambda ')\in g\kx $, 
a contradiction.

We can prove (ii) as in (i) 
using $D_{(x_2,f)}$ instead of $D_{(x_1,g)}$. 
\end{proof}

\begin{proof}[Proof of Theorem~{\rm \ref{thm:rank3 invariant ring2}}]
(i) 
By Lemma~\ref{lem:rank3 invariant ring2} (ii) 
and Remark~\ref{rem:intersection}, 
it suffices to show that 
(a) $k[\bp ,f][g^{\pm 1}]\cap \kx =k[\bp ,f][g]$, and 
(b) $k[\bp ,g][f^{\pm 1}]\cap \kx =k[\bp ,g][f]$. 
We only prove (a), 
since the proof of (b) is similar.

For $\sigma :k[\x ,y]\ni u(\x ,y)\mapsto u(\bp ,f)\in \kx $ 
and $\pi :\kx \to \kx /g\kx $, 
we set $\ol{\sigma }:=\pi \circ \sigma $. 
By Lemma~\ref{lem:rank3 invariant ring2} (i), 
$\ol{\sigma }(x_i)=\pi (p_i)$ 
equals $\pi (x_i^p)=\pi (\tau (x_i))$ 
for each $i$. 
Moreover, 
we have $\sigma (y)=f=\tau (y)$. 
Hence, we get 

\nd {\bf 19}$^\circ $ 
$\ol{\sigma }=\pi \circ \tau |_{k[\x ,y]}$.

\nd By definition, 
$\sigma (x_i)=p_i=\psi (x_i^p)=\psi (\tau (x_i))$ 
holds for each $i$. 
Hence, 
for $u=f,g$, 
we have $\sigma (u)=\psi (\tau (u))=\psi (u^p)=u^p$ 
by Remark~\ref{rem:Frob} (i). 
Thus, 
we know that

\nd {\bf 20}$^\circ $ 
$\sigma (g)=g^p$ 
and 
$\sigma (y^p-f)=f^p-f^p=0$ 
belong to 
$gk[\bp ,f][g]$.

Now, 
we show that $\ker \ol{\sigma }=(g,y^p-f)$. 
Then, 
(a) follows by Lemma~\ref{lem:intersection} and 20$^\circ $. 
By 20$^\circ $, 
we have $g,y^p-f\in \ker \ol{\sigma }$. 
To show 
$\ker \ol{\sigma }\subset (g,y^p-f)$, 
pick any $\eta \in \ker \ol{\sigma }$. 
Write $\eta =(y^p-f)\eta _0+\eta _1$, 
where $\eta _0,\eta _1\in k[\x ,y]$ 
with $\deg _y\eta _1<p$. 
Then, 
$\eta _1$ is in $\ker \ol{\sigma }$, 
since $\eta ,y^p-f\in \ker \ol{\sigma }$. 
By 19$^\circ $, 
this implies that $\tau (\eta _1)\in \ker \pi =g\kx $. 
Hence, we get $\eta _1\in gk[\x ,y]$ by Lemma~\ref{lem:derivation2} (i). 
Therefore, $\eta $ belongs to $(g,y^p-f)$.

(ii) 
By (i), 
we have 
$\kx ^{\ep _h}\simeq k[\x ^p,f,g]
=\tau (k[\x ,y,z])\simeq k[\x ,y,z]/\ker \tau $, 
where $\x ^p:=\{ x_1^p,x_2^p,x_3^p\} $. 
So, we show that $\ker \tau =
(y^p-f,z^p-g)$. 
Since ``$\supset $" is clear, 
we only check ``$\subset $." 
First, 
we claim that 
$(f^ig^jr^m)_{0\le i,j,m<p}$ is a $k(\x ^p)$-basis of $\kxr $, 
since $f^p,g^p,r^p\in k(\x ^p)$, 
$[\kxr :k(\x ^p)]=p^3$ 
and 
$\kxr =k(f,g,r)=k(\x ^p,f,g,r)$ by 1$^\circ $. 
Now, 
pick any $\eta \in \ker \tau $. 
Write 
$\eta =(y^p-f)\eta _1+(z^p-g)\eta _2+\sum _{i,j=0}^{p-1}\eta _{i,j}y^iz^j$, 
where $\eta _1,\eta _2\in k[\x ,y,z]$ 
and $\eta _{i,j}\in \kx $. 
Then, 
we have 
$\sum _{i,j=0}^{p-1}\tau (\eta _{i,j})f^ig^j=\tau (\eta )=0$. 
Hence, by the claim, 
$\tau (\eta _{i,j})=\eta _{i,j}(x_1^p,x_2^p,x_3^p)$ 
must be zero for all $i$, $j$. 
This implies that $\eta _{i,j}=0$ for all $i$, $j$. 
Therefore, 
$\eta $ belongs to $(y^p-f,z^p-g)$.

Set 
$(f_1,f_2,x_4,x_5):=(y^p-f,z^p-g,y,z)$. 
If $k[\x ,x_4,x_5]/(f_1,f_2)\simeq \kx $, 
then the affine variety $f_1=f_2=0$ 
in $\A _k^5$ 
is isomorphic to $\A _k^3$, 
and hence smooth. 
However, 
this affine variety has a singular point at the origin, 
because 
$((\partial f_i/\partial x_j)(0))_{i,j}$ 
is a zero matrix, 
and of rank less than two. 
Hence, 
$\kx ^{\ep _h}$ is not isomorphic to $\kx $. 
\end{proof}

\section{Nagata type automorphisms}\label{sect:Nagata}
\setcounter{equation}{0}

In this section, 
we study the Nagata type automorphisms. 
In \S \ref{sect:Nagata construction}, 
we construct the automorphism. 
In \S \ref{sect:Nagata pl}, 
we study the plinth ideals. 
In \S \ref{sect:Dedekind}, 
we prove a theorem 
used to construct a generator of the invariant ring. 
The invariant rings are studied 
in \S \ref{sect:Nagata invariant ring1} 
and \S \ref{sect:Nagata invariant ring2}. 
Theorem~\ref{thm:Nagata Main} 
easily 
follows from 
Theorems~\ref{thm:pl princ Nagata}, 
\ref{thm:Nagata2}, 
\ref{thm:Nagata relation1} 
and \ref{thm:Nagata relation2}.

\subsection{Construction}\label{sect:Nagata construction}

Let $R$ be a UFD with $p:=\ch R>0$, 
and $R[x,y]$ the polynomial ring in two variables over $R$. 
For $g,h\in R[x,y]$ and $c\in R$, 
we write $g\equiv _ch$ if $g-h\in cR[x,y]$. 
For $c\in R\sm \zs $, 
we write $R_c:=R[c^{-1}]$. 
We remark that 
\begin{equation}\label{eq:R_c}
R_{c_1^{l_1}\cdots c_t^{l_t}}=
R_{c_1\cdots c_t}=R[c_1^{-1},\ldots ,c_t^{-1}]
\quad (\forall 
c_1,\ldots ,c_t\in R\sm \zs ,\ 
l_1,\ldots ,l_t\ge 1).
\end{equation}

Now, 
we fix $a\in R\sm \zs $ and $\theta (y)\in yR[y]\sm \zs $, 
and define 
$f:=ax+\theta (y)$. 
Then, 
$R_a[x,y]=R_a[f][y]$ 
is the polynomial ring in $y$ over $R_a[f]$. 
Hence, by Example~\ref{example:action on A[x]}, 
$\widetilde{\ep }:R_a[f][y]\ni u(y)\mapsto u(y+aT)\in R_a[f][y][T]$ 
is a $\Ga $-action on $R_a[x,y]$ 
with $R_a[x,y]^{\widetilde{\ep }}=R_a[f]$. 
Since $ax+\theta (y)=f=\widetilde{\ep }(f)
=a\widetilde{\ep }(x)+\theta (y+aT)$, 
we have 
\begin{equation}\label{eq:Nagata action}
\widetilde{\ep }(x)
=x+a^{-1}(\theta (y)-\theta (y+aT))
=x-\theta '(y)T
-aT^2\sum _{i\ge 2}\theta _i(y)(aT)^{i-2}
\end{equation}
(cf.~Notation~\ref{notation:taylor}). 
Hence, 
$\widetilde{\ep }(x)$ is in $R[x,y][T]$. 
Since $\widetilde{\ep }(y)=y+aT$ 
is in $R[x,y][T]$ by definition, 
$\widetilde{\ep }$ restricts to a $\Ga $-action $\ep $ on $R[x,y]$ 
with $R[x,y]^{\ep }=R_a[f]\cap R[x,y]$.

We fix 
$0\ne F\in R[f]\subset R[x,y]^{\ep }$, 
and define 
\begin{equation}\label{eq:Nagata q}
\phi :=\ep _F\in \Aut _RR[x,y]
\quad\text{and}\quad 
q:=y^p-(aF)^{p-1}y.
\end{equation}
This $\phi $ is the same as 
$\psi $ in Theorem~\ref{thm:Nagata Main}, 
namely, 
the Nagata type automorphism. 
The purpose of Section~\ref{sect:Nagata} is to 
study the structures of $R[x,y]^{\phi }$ and $\pl (\phi )$. 
Note that $\phi $ is the restriction of 
$\widetilde{\phi }:R_a[f][y]\ni u(y)\mapsto u(y+aF)\in R_a[f][y]$ to $R[x,y]$. 
Hence, 
by Lemma~\ref{lem:1var}, 
we have 
\begin{equation}\label{eq:Nagata invariant}
R[x,y]^{\phi }=R_a[f][y]^{\widetilde{\phi }}\cap R[x,y]=R_a[f,q]\cap R[x,y]. 
\end{equation}

\begin{example}\label{example:Nagata}\rm
Let $R=k[z]$ be the polynomial ring in one variable over a field $k$. 
If $a=z$, $\theta (y)=y^2$ and $F=f$, 
then we have $\phi (x)=x-2yf-zf^2$ and $\phi (y)=y+zf$. 
This $\phi $ is the famous automorphism of Nagata~\cite{Nagata}. 
\end{example}

Write $\theta (y)=\sum _{i\ge 1}s_iy^i$, 
where $s_i\in R$. 
Then, we have $\theta '(y)=\sum _{p\nmid i}is_iy^{i-1}$. 
We define  
\begin{equation}\label{eq:Nagata notation}
I:=aR[x,y]+\theta '(y)R[x,y], \ 
d:=\gcd (a,\theta '(y)), \  
b:=ad^{-1},\ 
\rho (y):=\sum _{p\nmid i}t_iy^i,
\end{equation}
where $t_i:=s_id^{-1}$ for each $i$ with $p\nmid i$.

\begin{rem}\label{rem:NGT}\rm 
$\theta '(y)=d\rho '(y)$ and 
$\gcd (b,\rho (y))=\gcd (b,\rho '(y))=1$. 
\end{rem}

\begin{rem}\label{rem:principality of I}\rm 
We have $I\subset dR[x,y]$. 
Moreover, 
the following A through F are equivalent 
(see 
\cite[\S 1, Exercise 2]{AM} for D $\Leftrightarrow $ E, and 
\cite[\S 0.3]{Nagata} for E $\Leftrightarrow $ F).

\nd {\bf A}. 
$I$ is a principal ideal of $R[x,y]$.

\nd {\bf B}. 
$d$ belongs to $I$, that is, 
$I=dR[x,y]$.

\nd {\bf C}. 
There exist 
$\zeta _1,\zeta _2\in R[y]$ such that 
$a\zeta _1+\theta '(y)\zeta _2=d$, i.e., 
$b\zeta _1+d^{-1}\theta '(y)\zeta _2=1$.

\nd {\bf D}. 
The image of 
$d^{-1}\theta '(y)=\sum _{p\nmid i}it_iy^{i-1}$ in $(R/bR)[y]$ 
is a unit of $(R/bR)[y]$.

\nd {\bf E}. 
We have $\ol{t}_1\in (R/bR)^*$, i.e., $1\in t_1R+bR$. 
For each $i\ge 2$ with $p\nmid i$, 
we have $\ol{t}_i\in \nil (R/bR)$, 
i.e., $t_i\in \sqrt{bR}$. 
Here, $\nil (S)$ denotes the nilradical of a ring $S$.

\nd {\bf F}. 
$(R/bR)[\ol{\rho }(y)]=(R/bR)[y]$, 
i.e., 
$y\equiv _b\nu (\rho (y))$ for some $\nu (y)\in R[y]$.

We note that, 
if $b=1$, 
i.e., 
$R/bR=\zs $, 
then D, E and F are trivial. 
\end{rem}

Now, observe that 
$\phi (y)=y+dbF$ and $f-\theta (y)=dbx$. 
Set $\theta ^*(y):=\sum _{i\ge 1}s_{pi}y^i$. 
Then, we have 
$\theta (y)=\theta ^*(y^p)+d\rho (y)$. 
Hence, 
we can apply Remark~\ref{rem:q_1} (ii) with 	
$$
(a,b,c,S,\xi (T),w,\xi ^*(T),\widehat{\xi }(T))
=(d,bF,y,R[f],f-\theta (T),bx,f-\theta ^*(T),\rho (T)). 
$$ 
Thus, 
by (\ref{eq:q_1^*}), we know that $R[x,y]^{\phi }$ contains 
\begin{equation}\label{eq:q_1 Nagata}
q_1:=d^{-1}(f-\theta ^*(q))
=d^{-1}\xi ^*(q)
\in bx+\rho (y)+d^{p-2}(bF)^{p-1}yR[f,aF,y]. 
\end{equation}
Note that $R_a[q_1,q]=R_a[f,q]$, 
since $d^{-1}=b/a\in R_a$. 
Hence, 
it follows from (\ref{eq:Nagata invariant}) 
that $R[x,y]^{\phi }=R_a[q_1,q]\cap R[x,y]$. 
In fact, 
the following theorem holds.

\begin{thm}\label{thm:Nagata1}
We have $R[x,y]^{\phi }=R_b[q_1,q]\cap R[x,y]$. 
\end{thm}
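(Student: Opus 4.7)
The plan is to extend $\phi$ to the localization $R_b[x,y]$ and prove the stronger equality $R_b[x,y]^\phi = R_b[q_1,q]$. Since $b \in R \subset R[x,y]^\phi$, $\phi$ extends to an order-$p$ automorphism of $R_b[x,y]$, and since $q, q_1 \in R[x,y]^\phi$, both directions of the theorem then follow formally: $R_b[q_1,q] \cap R[x,y] \subset R_b[x,y]^\phi \cap R[x,y] = R[x,y]^\phi$, and conversely $R[x,y]^\phi \subset R_b[x,y]^\phi = R_b[q_1,q]$ intersected with $R[x,y]$ gives the reverse containment.

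The crux is the structural claim that $R_b[x,y]$ is a free $R_b[q_1,q]$-module of rank $p$ with basis $1, y, y^2, \ldots, y^{p-1}$. I would first note that $q_1 = d^{-1}(f - \theta^*(q))$ gives $f = dq_1 + \theta^*(q) \in R[q_1,q]$, hence $F \in R[f] \subset R[q_1,q]$ and $aF = bdF \in R[q_1,q]$. The explicit formula \eqref{eq:q_1 Nagata} then writes $bx = q_1 - \rho(y) - d^{p-2}(bF)^{p-1}y \eta$ with $\eta \in R[f, aF, y] \subset R[q_1,q,y]$; inverting $b$ gives $x \in R_b[q_1,q][y]$, so $R_b[x,y] = R_b[q_1,q][y]$. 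The element $y$ satisfies the monic relation $y^p - (aF)^{p-1}y - q = 0$ over $R_b[q_1,q]$, so $\{y^i\}_{0 \le i < p}$ spans. For independence, observe that in the polynomial ring $R_a[x,y] = R_a[f,y]$ the element $q$ has exact $y$-degree $p$, so $R_a[f,y]$ is free of rank $p$ over $R_a[f,q] = R_a[q_1,q]$ on the same basis; the injection $R_b[q_1,q] \hookrightarrow R_a[q_1,q]$ of polynomial rings in $q_1,q$ then transports independence down.

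Given freeness, the invariance computation is a routine descending induction. Writing $u \in R_b[x,y]^\phi$ as $u = \sum_{i=0}^{p-1} r_i y^i$ with $r_i \in R_b[q_1,q]$, the identities $\phi(y) = y + aF$ and $\phi(r_i) = r_i$ yield
\begin{equation*}
0 = \phi(u) - u = \sum_{j=0}^{p-2} y^j \sum_{i=j+1}^{p-1} \binom{i}{j} r_i (aF)^{i-j},
\end{equation*}
and freeness forces each inner sum to vanish. Reading these equations for $j = p-2, p-3, \ldots, 0$ in turn, at step $j = p-1-k$ (with $r_{p-1} = \cdots = r_{p-k+1} = 0$ already known) the equation reduces to $(p-k)(aF)r_{p-k} = 0$; since $aF$ is a nonzero element of the domain $R_b[x,y]$ and $p-k \not\equiv 0 \pmod p$ for $1 \le k \le p-1$, this forces $r_{p-k} = 0$. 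Hence $u = r_0 \in R_b[q_1,q]$.

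The only substantive obstacle is the structural claim — specifically certifying $R_b[x,y] = R_b[q_1,q][y]$ by unwinding \eqref{eq:q_1 Nagata} and using $aF \in R[q_1,q]$ to show the ``error term'' lies in $R[q_1,q,y]$. Once that is in hand, the freeness follows from the degree-$p$ minimal polynomial argument and the rest is a formal invariance computation.
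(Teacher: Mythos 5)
Your proof is correct, but it takes a genuinely different route from the paper's. The paper starts from the identity $R[x,y]^{\phi }=R_a[f,q]\cap R[x,y]=R_a[q_1,q]\cap R[x,y]$ (equation (\ref{eq:Nagata invariant}), obtained from Lemma~\ref{lem:1var} applied to $\widetilde{\phi }$ on $R_a[f][y]$) and then shrinks the localization from $R_a=R_{bd_0}$ to $R_b$: writing $d_0$ for the product of the prime factors of $d$ not dividing $b$, it shows $R_{d_0}[q_1,q]\cap R[x,y]=R[q_1,q]$ via Remarks~\ref{rem:indep2} and \ref{rem:intersection}, the key computation being that modulo $d_0$ one has $\ol{q}_1=\ol{b}x+\ol{\eta }(y)$ and $\ol{q}=y^p$ with $\ol{b}$ a non-zero-divisor, so $\ol{q}_1,\ol{q}$ stay algebraically independent. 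You instead work upstairs: you extend $\phi $ to $R_b[x,y]$, use $f=dq_1+\theta ^*(q)\in R[q_1,q]$ together with (\ref{eq:q_1 Nagata}) to get $bx\in R[q_1,q][y]$ and hence $R_b[x,y]=\bigoplus _{i=0}^{p-1}R_b[q_1,q]y^i$ (spanning from the monic relation $y^p-(aF)^{p-1}y-q=0$, independence transported from the standard decomposition of $R_a[f][y]$ over $R_a[f,q]$), and then kill the coefficients $r_{p-1},\ldots ,r_1$ of an invariant element by the descending induction, which is essentially the proof of Lemma~\ref{lem:1var} rerun over $R_b[q_1,q]$. Your version is self-contained, avoids the intersection lemmas of \S \ref{sect:intersection lemma} entirely, and yields the stronger statement $R_b[x,y]^{\phi }=R_b[q_1,q]$; the paper's version is less direct here but reuses machinery (Lemmas~\ref{lem:intersection} and \ref{lem:intersection 2}) that it needs anyway for Theorem~\ref{thm:Nagata2} and the rank-three computations. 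Both arguments ultimately rest on the same two facts: $q$ is monic of degree $p$ in $y$ over $R_a[f]$, and $q_1$ agrees with $bx+\rho (y)$ up to terms lying in $yR[q_1,q,y]$.
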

\begin{proof}
It suffices to verify 
$R_a[q_1,q]\cap R[x,y]=R_b[q_1,q]\cap R[x,y]$. 
Let $d_0$ be the product of 
all prime factors of $d$ 
not dividing $b$. 
Since $a=bd$, 
we have $R_a=R_{bd_0}$ by (\ref{eq:R_c}). 
Hence, 
by Remark~\ref{rem:intersection}, 
we are reduced to proving that 
$R_{d_0}[q_1,q]\cap R[x,y]=R[q_1,q]$. 
Due to Remark~\ref{rem:indep2}, 
it suffices to show that 
$\mu (\ol{q}_1,\ol{q})\ne 0$ 
for all $\mu (x,y)\in (R/d_0R)[x,y]\sm \zs $. 
Here, $\ol{h}$ denotes the image of $h$ in $(R/d_0R)[x,y]$ 
for $h\in R[x,y]$. 
Note that $\gcd (b,d_0)=1$ by the definition of $d_0$. 
Hence, 
$\ol{b}$ is not a zero-divisor of $R/d_0R$. 
Since $d_0\mid a$, 
we have $\ol{f}=\ol{\theta }(y)$. 
Hence, 
$\ol{q}_1=\ol{b}x+\ol{\eta }(y)$ for some $\eta (y)\in R[y]$ 
by (\ref{eq:q_1 Nagata}). 
We also have $\ol{q}=y^p$ 
by (\ref{eq:Nagata q}). 
Now, 
pick any $\mu (x,y)=\sum _{i=0}^l\mu _i(y)x^i\in (R/d_0R)[x,y]\sm \zs $, 
where $\mu _i(y)\in (R/d_0R)[y]$, 
$l\ge 0$ and $\mu _l(y)\ne 0$. 
Then, since $\mu _l(y^p)\ol{b^l}\ne 0$, 
we get $\mu (\ol{q}_1,\ol{q})=\mu (\ol{b}x+\ol{\eta }(y),y^p)
=\mu _l(y^p)\ol{b^l}x^l+\cdots \ne 0$. 
\end{proof}

\begin{rem}\label{rem:Nagata psi}\rm 
(i) 
If $\theta '(y)\in aR[y]$, i.e., 
$b=1$, 
then we have 
$R[x,y]^{\phi }=R[q_1,q]$ by Theorem~\ref{thm:Nagata1}.

\nd (ii) 
We define an isomorphism 
$\psi :R_a[f][y^p]\ni u(y^p)
\mapsto u(q)\in R_a[f,q]=R_a[x,y]^{\widetilde{\phi }}$. 
If $\theta '(y)=0$, i.e., 
$\theta (y)\in R[y^p]$, 
then $x=a^{-1}(f-\theta ^*(y^p))$ lies in $R_a[f][y^p]$, 
and $\psi (x)=q_1$ by (\ref{eq:q_1 Nagata}). 
Hence, 
we have $\psi (R[x,y^p])=R[q_1,q]=R[x,y]^{\phi }$ by (i). 
This can be considered as an analogue of 
Theorems~\ref{thm:rank3 invariant ring} (ii) 
and \ref{thm:rank3 invariant ring2}. 
\end{rem}

\begin{example}
If $p=2$ in Example~\ref{example:Nagata}, 
then we have $\theta '(y)=2y=0$ and $\theta ^*(y)=y$. 
Hence, 
we get $R[x,y]^{\phi }=R[q_1,q]=\psi (R[x,y^p])$ 
by Remark~\ref{rem:Nagata psi} (ii), 
where $q=y^2-zfy$ and $q_1=z^{-1}(f-q)=x+fy$. 
\end{example}

\subsection{Plinth ideals}\label{sect:Nagata pl}

We set $\delta :=\phi -\id $. 
Recall that $I=aR[x,y]+\theta '(y)R[x,y]$. 
From (\ref{eq:Nagata action}), 
we see that $\delta (x)=\widetilde{\ep }(x)|_{T=F}-x\in FI$. 
Since $\delta (y)=aF$ is in $FI$, 
we know by \S \ref{sect:plinth} (5) 
that 
\begin{equation}\label{eq:delta ideal}
\pl (\phi )
=\delta (R[x,y])\cap R[x,y]^{\phi }
\subset \delta (R[x,y])\subset FI.
\end{equation}

The goal of \S \ref{sect:Nagata pl} is to 
prove the following theorem. 

\begin{thm}\label{thm:pl princ Nagata}
The following are equivalent: 

\nd {\rm a.} $I$ is a principal ideal of $R[x,y]$. 

\nd {\rm b.} $I=dR[x,y]$. 

\nd {\rm c.} $\pl (\phi )=dFR[x,y]^{\phi }$. 

\nd {\rm d.} $\pl (\phi )$ is a principal ideal of $R[x,y]^{\phi }$. 

\end{thm}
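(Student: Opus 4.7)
The equivalence a $\Leftrightarrow$ b is exactly the characterization A $\Leftrightarrow$ B listed in Remark~\ref{rem:principality of I}, and c $\Rightarrow$ d is trivial. An unconditional reduction drives everything else: since $d \mid a$ and $\theta'(y) \in dR[y]$, we have $I \subset dR[x,y]$ regardless of hypothesis, so (\ref{eq:delta ideal}) gives $\delta(R[x,y]) \subset FI \subset dFR[x,y]$. Because $dF \in R[x,y]^{\phi}$ is a non-zero-divisor, the ``$aB \cap B^{\phi} = aB^{\phi}$'' argument inside the proof of Lemma~\ref{lem:pl principal}(i) yields $dFR[x,y] \cap R[x,y]^{\phi} = dFR[x,y]^{\phi}$, so $\pl(\phi) \subset dFR[x,y]^{\phi}$ holds unconditionally. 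Consequently c is equivalent to the single statement $dF \in \pl(\phi)$, and Lemma~\ref{lem:pl principal}(i) then delivers the equality automatically once b is assumed.

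For b $\Rightarrow$ c, assume $I = dR[x,y]$ and use Remark~\ref{rem:principality of I}(C) to fix $\zeta_1, \zeta_2 \in R[y]$ with $a\zeta_1 + \theta'(y)\zeta_2 = d$. I plan to construct $g \in R[x,y]$ with $\delta(g) = dF$. A direct product-rule calculation (\S\ref{sect:plinth}(4)) applied to $g_0 := \zeta_1 y - \zeta_2 x$, using $\delta(y) = aF$ and the (\ref{eq:Nagata action}) expansion of $\delta(x)$, yields
$$\delta(g_0) = dF + \bigl(\zeta_2 aF^2\gamma + \delta(\zeta_1)\phi(y) - \delta(\zeta_2)\phi(x)\bigr),$$
the principal part $dF$ arising exactly from $(a\zeta_1 + \theta'(y)\zeta_2)F = dF$ and $\gamma \in R[y, aF]$ being the Hasse-expansion coefficient of $\theta$. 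Under b the error lies in $dFR[x,y]$, so it has the form $dF\mu_0$ for an explicit $\mu_0 \in R[x,y]$. The correction step replaces $g_0$ by $g_0 - \mu_0 y$: using $\delta(\mu_0 y) = \phi(\mu_0)aF + \delta(\mu_0)y$ together with $\delta(\mu_0) \in dFR[x,y]$, the new residual becomes $-\delta(\mu_0)\phi(y)$, which lies in a strictly higher level of an $F$-adic filtration on $R[x,y]$. Because $\delta^p = 0$, the iteration terminates in finitely many steps and produces $g$ with $\delta(g) = dF$, whence $dF \in \pl(\phi)$.

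For d $\Rightarrow$ b, write $\pl(\phi) = eR[x,y]^{\phi}$. By the reduction, $e = dFw$ with $w \in R[x,y]^{\phi}$; and from $aF = \delta(y) \in \pl(\phi)$ we get $aF = dFwu$, so $wu = b$ for some $u \in R[x,y]^{\phi}$. Factorial closedness of $R$ in the UFD $R[x,y]$ (a degree argument) forces $w, u \in R$, so $w \mid b$. It therefore suffices to prove $w \in R^{\ast}$: then $dF$ generates $\pl(\phi)$, and $dF \in \pl(\phi) \subset FI$ forces $d \in I$, i.e., b. For this I invoke Lemma~\ref{lem:pl principal}(ii): the plan is to exhibit $\eta \in \pl(\phi)$ with $\gcd(aF, \eta) = dF$ in $R[x,y]$, so that the principality of $\pl(\phi)$ together with $dF \in R[x,y]^{\phi}$ forces $dF \in \pl(\phi)$. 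The candidate $\eta$ arises from the invariant $q_1 = d^{-1}(f - \theta^*(q))$ of (\ref{eq:q_1 Nagata}): the congruence $q_1 \equiv bx + \rho(y) \pmod{(bF)^{p-1}R[x,y]}$ combined with $\delta(q_1) = 0$ yields $\delta(bx + \rho(y)) \in (aF)^{p-1}FR[x,y]$, and the coprimality $\gcd(b, \rho'(y)) = 1$ of Remark~\ref{rem:NGT} is then used to control the leading term of $\eta$ so that the desired gcd is $dF$ and not a proper multiple.

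The main obstacle is the termination in b $\Rightarrow$ c: one must verify that the successive residuals genuinely lie in strictly higher levels of an $F$-adic filtration, which requires careful bookkeeping of the Hasse-derivative expansions $\zeta_i(y+aF) = \sum_j (\zeta_i)_j(y)(aF)^j$ and of $\gamma$. A secondary difficulty for d $\Rightarrow$ b is confirming that the constructed $\eta$ yields $\gcd(aF, \eta) = dF$ rather than a proper divisor; both points essentially reduce to exploiting $\gcd(b, \rho'(y)) = 1$.
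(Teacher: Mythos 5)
Your skeleton is sound and matches the paper's: a $\Leftrightarrow$ b via Remark~\ref{rem:principality of I}, c $\Rightarrow$ d trivially, the unconditional inclusion $\pl(\phi)\subset dFR[x,y]\cap R[x,y]^{\phi}=dFR[x,y]^{\phi}$, and the reduction of both remaining implications to the single assertion $dF\in\pl(\phi)$ (for d $\Rightarrow$ b, via Lemma~\ref{lem:pl principal}(ii) and then $dF\in FI\Rightarrow d\in I$). But both substantive implications have genuine gaps. For b $\Rightarrow$ c, your termination claim is false as stated. After the first correction the residual is $-\delta(\mu_0)\phi(y)$ with $\mu_0=r_0/(aF)$; since $\delta(\zeta_i)=\zeta_i'(y)aF+O((aF)^2)$, the cofactor $\mu_0$ contains the $F$-degree-zero term $\zeta_1'(y)y-\zeta_2'(y)x+\cdots$, so $\delta(\mu_0)$ lies only in $FI$, not in $F^2R[x,y]$ or in $aFR[x,y]$. (The $F$-level of $\delta(\mu)$ exceeds $1$ only when $\mu$ itself is divisible by the invariant $F$, which $\mu_0$ is not.) Cancelling a residual in $FI\setminus aFR[x,y]$ then forces an $x$-correction, which reintroduces terms $\delta(\cdot)\phi(x)\in FI$ at the same level: writing the stage-$n$ residual as $dF\nu$ with $\nu\in F^jR[x,y]$, the stage-$(n+1)$ residual again lies in $F^{j+1}I=dF\cdot F^jR[x,y]$, so the filtration level of $\nu$ never rises and $\delta^p=0$ is never brought to bear. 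The paper avoids the iteration entirely: condition F of Remark~\ref{rem:principality of I} (equivalent to b) gives $\nu(y)$ with $y\equiv_b\nu(\rho(y))\equiv_b\nu(q_1)$, and then $s:=b^{-1}(y-\nu(q_1))\in R[x,y]$ satisfies $\delta(s)=b^{-1}\delta(y)=dF$ in one step (Lemma~\ref{lem:gy-h}). Any repair of your argument must encode the unit/nilpotency conditions of Remark~\ref{rem:principality of I}~E, and the clean way to do so is exactly this.

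For d $\Rightarrow$ b, the reduction is right but the element $\eta$ is never constructed, and the construction is the entire difficulty. High divisibility of $\delta(bx+\rho(y))$ by powers of $bF$ does not by itself produce an element of $\pl(\phi)=\delta(R[x,y])\cap R[x,y]^{\phi}$: to divide an element of $\pl(\phi)$ by a constant $c$ and stay in $\pl(\phi)$ you must exhibit $g,h\in R[x,y]^{\phi}$ with $yg\equiv_c h$ and then invoke Lemma~\ref{lem:gy-h}, and producing such congruences modulo $b_i^{e_i}$ for every prime power $b_i^{e_i}$ exactly dividing $b$ is the content of Lemma~\ref{lem:Nagata pl key}. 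That lemma needs, for each prime $b_i\mid b$, a maximality argument on the exponents $e$ admitting $yg_2\equiv_{b_i^e}g_1$ with $g_1,g_2\in R[q_1,q]$ and $g_2\not\equiv_{b_i}0$, resting on the fact that $\ol{\rho}(y)$ and $y^p$ generate $Q(R/b_iR)(y)$ (this is where $\gcd(b,\rho(y))=1$ enters). Your single-$\eta$ plan would suffice if $\eta$ existed, but the congruence $q_1\equiv bx+\rho(y)$ modulo $(bF)^{p-1}$ and the coprimality $\gcd(b,\rho'(y))=1$ do not, as written, yield any element of $\pl(\phi)$ beyond $aF$ itself; and since $b$ may have several prime factors, one expects to need one auxiliary element per prime, as in the paper, followed by iterated application of Lemma~\ref{lem:pl principal}(ii).
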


We have ``a $\Leftrightarrow $ b" by Remark~\ref{rem:principality of I} B, 
and ``c $\Rightarrow $ d" is clear. 
In the rest of \S \ref{sect:Nagata pl}, 
we prove 
``b $\Rightarrow $ c" and ``d $\Rightarrow $ b". 
Our tools are 
Lemma~\ref{lem:pl principal} and the following lemma. 

\begin{lem}\label{lem:gy-h}
Assume that $g,h\in R[x,y]^{\phi }$ and $c\in R\sm \zs $ 
satisfy $yg-h\in cR[x,y]$, 
i.e., 
$yg\equiv _ch$. 
Then, 
$c^{-1}aFg$ belongs to $\pl (\phi )$. 

\end{lem}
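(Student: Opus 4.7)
The plan is to apply the operator $\delta:=\phi-\id$ to the hypothesis $yg-h\in cR[x,y]$, then exploit the invariance of $g$ and $h$ together with the Leibniz-type identity of \S\ref{sect:plinth}(4) to isolate $aFg$. Concretely, I would write $yg-h=cw$ for some $w\in R[x,y]$, and apply $\delta$ to both sides. Since $c\in R\subset R[x,y]^{\phi }$ and $\delta$ is $R[x,y]^{\phi }$-linear (it is even $R[x,y]^{\phi }=\ker \delta $-linear, by \S\ref{sect:plinth}), the right-hand side becomes $c\delta (w)$. On the left, $\delta (h)=0$ because $h\in R[x,y]^{\phi }$, and by \S\ref{sect:plinth}(4),
\[
\delta (yg)=\delta (y)g+\phi (y)\delta (g)=\delta (y)g,
\]
since $\delta (g)=0$. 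Because $\delta (y)=\phi (y)-y=aF$, this collapses to $c\delta (w)=aFg$.

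In particular, $c$ divides $aFg$ in $R[x,y]$, and since $R[x,y]$ is a domain and $c\ne 0$, we may divide to get $\delta (w)=c^{-1}aFg\in \delta (R[x,y])\subset R[x,y]$. It remains to observe that $c^{-1}aFg$ lies in $R[x,y]^{\phi }$: indeed $aFg\in R[x,y]^{\phi }$ (as $a,F,g$ are all $\phi $-invariant) and $c\in R\subset R[x,y]^{\phi }$, so applying $\phi $ to the identity $c(c^{-1}aFg)=aFg$ and cancelling $c$ in the domain $R[x,y]$ forces $\phi (c^{-1}aFg)=c^{-1}aFg$. Combining both memberships gives $c^{-1}aFg\in \delta (R[x,y])\cap R[x,y]^{\phi }=\pl (\phi )$, as desired. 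There is no real obstacle here; the only thing to be careful about is the cancellation of $c$, which is legitimate because $R$ is a UFD (in particular a domain) and $c\ne 0$.
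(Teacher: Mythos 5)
Your proposal is correct and follows essentially the same argument as the paper: set $s:=c^{-1}(yg-h)\in R[x,y]$, use the $R[x,y]^{\phi}$-linearity of $\delta$ to get $c\delta(s)=\delta(y)g=aFg$, and conclude $\delta(s)=c^{-1}aFg\in\pl(\phi)$. The only cosmetic difference is that the paper checks membership in $R[x,y]^{\phi}$ via the identity $\pl(\phi)=\delta(R[x,y])\cap R[x,y]^{\phi}[c^{-1}]$, while you verify the $\phi$-invariance of $c^{-1}aFg$ directly by cancelling $c$ in the domain; both are fine.
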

\begin{proof}
We remark that 
$\pl (\phi )=\delta (R[x,y])\cap R[x,y]^{\phi }[c^{-1}]$, 
since $\delta (R[x,y])\subset R[x,y]$ 
and $R[x,y]\cap R[x,y]^{\phi }[c^{-1}]=R[x,y]^{\phi }$. 
By assumption, 
$s:=c^{-1}(yg-h)$ lies in $R[x,y]$. 
Hence, 
$\delta (s)$ is in $\delta (R[x,y])$. 
Since $\delta $ is a linear map over 
$R[x,y]^{\phi }=\ker \delta $, 
and $g,h\in R[x,y]^{\phi }$, 
we have 
$c\delta (s)=\delta (cs)=\delta (yg-h)=\delta (y)g=aFg$. 
Hence, we get 
$\delta (s)=c^{-1}aFg$, 
which belongs to 
$\delta (R[x,y])\cap R[x,y]^{\phi }[c^{-1}]=\pl (\phi )$. 
\end{proof}

In the following discussions, 
we frequently use the following remark ($\dag $):

\nd 
($\dag $) 
If $c\in R\sm \zs $ divides $b$, 
then we have $q\equiv _cy^p$ and $q_1\equiv _c\rho (y)$ 
(cf.~(\ref{eq:Nagata q}), (\ref{eq:q_1 Nagata})). 

\begin{proof}[Proof of {\rm ``b $\Rightarrow $ c"}]
By b and (\ref{eq:delta ideal}), 
we have $\pl (\phi )\subset dFR[x,y]$. 
Hence, 
it suffices to show that $dF\in \pl (\phi )$ 
by Lemma~\ref{lem:pl principal} (i). 
By Remark~\ref{rem:principality of I}~F, 
there exists $\nu (y)\in R[y]$ such that 
$y\equiv _b\nu (\rho (y))$. 
Then, we have 
$y\equiv _b\nu (\rho (y))\equiv _b\nu (q_1)$ by ($\dag $). 
Since $\nu (q_1)$ is in $R[x,y]^{\phi }$, 
this implies that 
$b^{-1}aF=dF$ belongs to $\pl (\phi )$ by Lemma~\ref{lem:gy-h}. 
\end{proof}

Now, we write $b=b_1^{e_1}\cdots b_r^{e_r}$, 
where 
$b_i$ is a prime element of $B$ 
and $e_i\ge 1$, 
and $b_iB\ne b_jB$ if $i\ne j$. 
The following lemma holds regardless of the principality of $I$.

\begin{lem}\label{lem:Nagata pl key}
The following assertions hold for each $i$. 

\nd{\rm (i)} 
For any $s\in R[y]$, 
there exist $h_1,h_2\in R[q_1,q]$ 
such that $sh_2\equiv _{b_i}h_1$ and $h_2\not\equiv _{b_i}0$.

\nd{\rm (ii)} 
There exists $f_i\in R[q_1,q]\sm b_iR[x,y]$ 
such that $b_i^{-e_i}aFf_i$ belongs to $\pl (\phi )$. 
\end{lem}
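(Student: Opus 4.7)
The plan is to work modulo $b_i$ to establish (i), and then to bootstrap (i) to higher powers of $b_i$ for (ii). The starting observation is that since $b_i$ divides $b$, the remark $(\dag)$ yields $q \equiv _{b_i} y^p$ and $q_1 \equiv _{b_i} \rho(y)$. Writing $\bar{R} := R/b_iR$ (a domain, since $b_i$ is prime in the UFD $R$), the image of $R[q_1,q]$ modulo $b_i$ is therefore the subring $\bar{R}[\bar{\rho}(y),y^p]$ of $\bar{R}[y]$.

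For (i) I plan to prove the stronger statement $Q(\bar{R}[\bar{\rho}(y),y^p]) = Q(\bar{R})(y)$; any $\bar{s} \in \bar{R}[y]$ can then be written as $\bar{h}_1/\bar{h}_2$ with $\bar{h}_j \in \bar{R}[\bar{\rho}(y),y^p]$ and $\bar{h}_2 \ne 0$, and any lifts $h_j \in R[q_1,q]$ will have the required properties. For the stronger claim, Remark~\ref{rem:NGT} together with $b_i \mid b$ gives $\gcd(b_i,\rho'(y))=1$, hence $\bar{\rho}'(y) \ne 0$, so $\bar{\rho}(y) \notin Q(\bar{R})(y^p)$ (in characteristic $p$ the kernel of $\partial/\partial y$ on $Q(\bar{R})[y]$ is $Q(\bar{R})[y^p]$, which equals $Q(\bar{R})(y^p) \cap Q(\bar{R})[y]$ by freeness of $Q(\bar{R})[y]$ over $Q(\bar{R})[y^p]$). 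Since $[Q(\bar{R})(y):Q(\bar{R})(y^p)]=p$ is prime, $\bar{\rho}(y)$ generates the extension, so $Q(\bar{R})(y^p,\bar{\rho}(y))=Q(\bar{R})(y)$.

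For (ii) I will prove by induction on $e \ge 1$ that there exist $g,h \in R[q_1,q]$ with $yg \equiv _{b_i^e} h$ and $g \not\equiv _{b_i} 0$; applying the case $e=e_i$ in Lemma~\ref{lem:gy-h} with $c=b_i^{e_i}$ (which divides $b$, hence $a$) then yields (ii) with $f_i:=g$. The base case $e=1$ is (i) applied to $s=y$. For the inductive step, write $yg-h = b_i^e \mu$ with $\mu \in R[x,y]$, and apply (i) with $s=\mu$ to produce $g',h' \in R[q_1,q]$ with $\mu g'-h' = b_i \nu$ and $g' \not\equiv _{b_i} 0$; then
\begin{equation*}
y(gg') - (hg' + b_i^e h') = b_i^{e+1}\nu,
\end{equation*}
where $gg' \not\equiv _{b_i} 0$ because $\bar{R}[x,y]$ is a domain, and $hg' + b_i^e h' \in R[q_1,q]$ because $b_i^e \in R \subset R[q_1,q]$.

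The main obstacle is the bootstrap in (ii): part (i) is a clean field-theoretic statement at the single prime $b_i$, but lifting it up to $b_i^{e_i}$ depends on the observation that the error term $b_i^e h'$ already lives in $R[q_1,q]$, which works only because $b_i^e$ is a scalar in the base ring $R$ rather than merely an element of $R[x,y]$.
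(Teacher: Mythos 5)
Part (i) of your proposal is correct and is essentially the paper's own argument: reduce modulo $b_i$, identify the image of $R[q_1,q]$ with $\overline{R}[\overline{\rho}(y),y^p]$ via $(\dag)$, and observe that this subring of $\overline{R}[y]$ has fraction field $Q(\overline{R})(y)$ because $\overline{\rho}(y)\notin Q(\overline{R})(y^p)$.

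Part (ii) has a genuine gap at the inductive step. You invoke (i) with $s=\mu$, where $\mu=b_i^{-e}(yg-h)\in R[x,y]$; but (i) is a statement about $s\in R[y]$, and it is false for general $s\in R[x,y]$: if $\overline{s}$ has nontrivial $x$-dependence, then $\overline{s}\notin Q(\overline{R}[\overline{\rho}(y),y^p])=Q(\overline{R})(y)$, so no relation $sh_2\equiv _{b_i}h_1$ with $h_2\not\equiv _{b_i}0$ can hold. Your $\mu$ does involve $x$ in general, since $q$ and $q_1$ do. The missing step --- which is exactly the extra maneuver in the paper's proof --- is to show that $\mu$ is congruent modulo $b_i$ to an element of $R[y]$: writing $g=\gamma _2(q_1,q)$ and $h=\gamma _1(q_1,q)$, the remark $(\dag )$ applied with $c=b_i^{e+1}$ (legitimate because $e+1\le e_i$, so $b_i^{e+1}$ divides $b$) gives $yg-h\equiv _{b_i^{e+1}}t$ for $t:=y\gamma _2(\rho (y),y^p)-\gamma _1(\rho (y),y^p)\in R[y]$; then $t\in b_i^eR[x,y]\cap R[y]=b_i^eR[y]$, so $b_i^{-e}t\in R[y]$ and $\mu \equiv _{b_i}b_i^{-e}t$, and one applies (i) to $b_i^{-e}t$ instead of to $\mu $. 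Without this reduction the invocation of (i) is unjustified. Note also that this repair is only available when $e+1\le e_i$, so the induction establishes the claim for $1\le e\le e_i$ rather than for all $e\ge 1$ as you state; that is all that is needed, but the blanket claim is not proved. With this insertion your bootstrap coincides with the paper's proof, which packages the same iteration as a maximality argument on the set of attainable exponents.
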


\begin{proof}
(i) 
Let $i=1$ for simplicity. 
Set $\ol{R}:=R/b_1R$ and $K:=Q(\ol{R})$. 
We denote the image of $h\in R[x,y]$ in $\ol{R}[x,y]$ by $\ol{h}$. 
Then, 
a surjection 
$R[q_1,q]\ni h\mapsto \ol{h}\in \ol{R}[\ol{\rho }(y),y^p]$ 
is defined thanks to ($\dag $). 
Recall that $\gcd (b,\rho (y))=1$ (cf.~Remark~\ref{rem:NGT}). 
Hence, 
$\ol{\rho }(y)$ is nonzero. 
Since $\rho (y)=\sum _{p\nmid i}t_iy^i$, 
it follows that $\ol{\rho }(y)\not\in K(y^p)$. 
Thus, we get 
$K(y)=K(\ol{\rho }(y),y^p)=Q(\ol{R}[\ol{\rho }(y),y^p])$. 
Since $\ol{s}\in K(y)$, 
we can write $\ol{s}=\widehat{h}_1/\widehat{h}_2$, 
where $\widehat{h}_1,\widehat{h}_2\in \ol{R}[\ol{\rho }(y),y^p]$ 
with $\widehat{h}_2\ne 0$. 
For $i=1,2$, 
choose 
$h_i\in R[q_1,q]$ with $\ol{h}_i=\widehat{h}_i$. 
Then, 
we have 
$sh_2\equiv _{b_1}h_1$ and $h_2\not\equiv _{b_1}0$, 
since $\ol{s}\ol{h}_2=\ol{h}_1$ and $\ol{h}_2\ne 0$. 

(ii) 
Let $E$ be the set of $e\ge 0$ 
for which there exist $g_1,g_2\in R[q_1,q]$ such that 
$yg_2\equiv _{b_1^e}g_1$ and $g_2\not\equiv _{b_1}0$. 
For such $g_1$ and $g_2$, 
we have $b_1^{-e}aFg_2\in \pl (\phi )$ 
by Lemma~$\ref{lem:gy-h}$, 
and $g_2\not\in b_1R[x,y]$. 
Hence, 
it suffices to show that 
there exists $e\in E$ with $e\ge e_1$. 
Clearly, $E$ is not empty. 
Suppose that $e:=\max E<e_1$. 
Choose $g_i=\gamma _i(q_1,q)\in R[q_1,q]$ for $i=1,2$ 
with 
$yg_2\equiv _{b_1^e}g_1$ and $g_2\not\equiv _{b_1}0$, 
where $\gamma _i(x,y)\in R[x,y]$. 
Then, 
by ($\dag $), 
$t:=y\gamma _2(\rho (y),y^p)-\gamma _1(\rho (y),y^p)
\equiv _{b_1^l}yg_2-g_1$ 
holds for all $0\le l\le e_1$, 
and hence for $l=e,e+1$ by supposition. 
Since $yg_2\equiv _{b_1^e}g_1$, 
we get 
$t\equiv _{b_1^e}yg_2-g_1\equiv _{b_1^e}0$, 
i.e., $b_1^{-e}t\in R[y]$. 
Thus, by (i), 
there exist $h_1,h_2\in R[q_1,q]$ 
such that $b_1^{-e}th_2\equiv _{b_1}h_1$ and $h_2\not\equiv _{b_1}0$. 
Then, 
we have 
\begin{align*}
(yg_2-g_1)h_2-b_1^eh_1
\equiv _{b_1^{e+1}}th_2-b_1^eh_1 
=b_1^e(b_1^{-e}th_2-h_1) 
\equiv _{b_1^{e+1}}0. 
\end{align*}
This gives that 
$yg_2h_2\equiv _{b_1^{e+1}}g_1h_2+b_1^eh_1$. 
Moreover, 
we have 
$g_2h_2\not\equiv _{b_1}0$, 
since $g_2\not\equiv _{b_1}0$ and $h_2\not\equiv _{b_1}0$. 
This contradicts the maximality of $e$. 
\end{proof}

\begin{proof}[Proof of {\rm ``d $\Rightarrow $ b"}]
$\pl (\phi )$ contains $\delta (y)=aF$ 
and $b^{-e_i}aFf_i$ for each $i$, 
where $f_i$ is as in Lemma~\ref{lem:Nagata pl key} (ii). 
Since $f_i\not\in b_iR[x,y]$, we have 
$\gcd (aF,b^{-e_1}aFf_1,\ldots ,b^{-e_r}aFf_r)
=(b_1^{e_1}\cdots b_r^{e_r})^{-1}aF=dF$, 
which is in $R[x,y]^{\phi }$. 
Hence, 
by d and Lemma~\ref{lem:pl principal} (ii), 
$dF$ lies in $\pl (\phi )$. 
Thus, 
$dF$ is in $FI$ 
by (\ref{eq:delta ideal}), 
and so $d\in I$. 
This implies b. 
\end{proof}

\subsection{Conductor}\label{sect:Dedekind}

Let $p$ be a prime number, 
$S$ a ring with $\ch S=p$, 
and $S[y]$ the polynomial ring in one variable over $S$. 
The purpose of \S \ref{sect:Dedekind} is to prove the following theorem.

\begin{thm}\label{thm:Dedekind}
For every $f\in S[y]$, 
we have 
$(f')^pS[y]\subset S[y^p,f]$. 
\end{thm}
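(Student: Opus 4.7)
The plan is to reduce to showing $(f')^p y^j\in S[y^p,f]$ for each $j\in\{0,1,\dots,p-1\}$. This reduction is legitimate because $S[y]$ is a free $S[y^p]$-module with basis $\{1,y,\dots,y^{p-1}\}$, the ring $S[y^p]$ is contained in $S[y^p,f]$, and $(f')^p$ itself lies in $S[y^p]$. The last fact follows from the direct computation: writing $f=\sum_i a_iy^i$ with $a_i\in S$, we have
\[
(f')^p=\Bigl(\sum_i i\,a_i\,y^{i-1}\Bigr)^{\!p}=\sum_i i^p a_i^p\,y^{(i-1)p}=\sum_i i\,a_i^p\,(y^p)^{i-1}\in S[y^p],
\]
using the freshman's dream in characteristic $p$ together with $i^p\equiv i\pmod p$. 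The case $j=0$ is then immediate from $(f')^p\in S[y^p]\subset S[y^p,f]$.

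The substance of the argument is the case $1\le j\le p-1$. My approach would be to set up the $p\times p$ change-of-basis matrix $M$ over $S[y^p]$ whose $(i,k)$-entry is the coefficient of $y^i$ in the expansion of $f^k$ in the $S[y^p]$-basis $\{1,y,\dots,y^{p-1}\}$ of $S[y]$. Cramer's rule immediately gives $(\det M)\cdot S[y]\subset S[y^p,f]$, i.e.\ $\det M$ annihilates the $S[y^p]$-module $S[y]/S[y^p,f]$; equivalently, $\det M\in \mathrm{Fitt}_0(S[y]/S[y^p,f])$. The goal is then to promote this to the statement that $(f')^p$ also kills the quotient.

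The main obstacle is that $(f')^p$ is in general neither a multiple nor a factor of $\det M$ in $S[y^p]$ (for instance, when $p=2$ and $f=F_0+F_1y$ one has $\det M=F_1$ and $(f')^p=F_1^2$, whereas when $p=5$ and $f=F_0+F_1y$ one has $\det M=F_1^{10}$ and $(f')^p=F_1^5$), so one cannot just invoke $\det M$ as a conductor element. To get $(f')^p$ precisely, I would work with the divided-difference identity
\[
f(Y)-f(y)=(Y-y)\,Q(Y,y),\qquad Q(y,y)=f'(y),
\]
in $S[Y,y]$. Raising to the $p$-th power and using $(Y-y)^p=Y^p-y^p$ in characteristic $p$ gives
\[
f(Y)^p-f(y)^p=(Y^p-y^p)\,Q(Y,y)^p,
\]
so $Q(Y,y)^p=\widetilde Q(Y^p,y^p)$ for a polynomial $\widetilde Q$ obtained by applying Frobenius to the coefficients of $Q$; in particular $Q(f,y)^p=\widetilde Q(f^p,y^p)\in S[f^p,y^p]\subset S[y^p,f]$. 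Combining this with the Taylor expansion $Q(f,y)=f'+D^{[2]}f\cdot(f-y)+\dots+D^{[n-1]}f\cdot(f-y)^{n-1}$ and the companion identities obtained by multiplying the defining relation by $y,y^2,\dots,y^{p-1}$ before taking the $p$-th power, one extracts explicit representations of $(f')^py^j$ as $S[y^p]$-linear combinations of $1,f,\dots,f^{p-1}$.

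As a cleaner alternative that avoids the combinatorial bookkeeping, I would run an induction on $n=\deg_y f$. The base case $n=0$ is trivial since then $f'=0$. For the inductive step, write $f=ay^n+g$ with $\deg_y g<n$. If $p\mid n$, then $ay^n\in S[y^p]$, so $S[y^p,f]=S[y^p,g]$ and $f'=g'$, and the claim follows from the inductive hypothesis applied to $g$. If $p\nmid n$, then $f'$ has degree $n-1$ with leading coefficient $na$, and one subtracts from $f$ an appropriate element of $S[y^p,f]$ built from lower powers of $f$ (together with elements of $S[y^p]$) to reduce the $y$-degree; the combinatorics arrange so that $(f')^p$, rather than $\det M$ or $(f')^{p-1}$, is exactly the power required to clear denominators in the recursion. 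The delicate point in either approach is verifying that the factor one picks up at each step is $f'$ and not some coarser invariant, so that the total accumulated factor is $(f')^p$.
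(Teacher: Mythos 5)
There is a genuine gap: both of your proposed routes stop exactly at the point where the real work begins, namely producing $(f')^py^j\in S[y^p,f]$ for $1\le j\le p-1$ (equivalently, showing that $(f')^p$ annihilates $S[y]/S[y^p,f]$, not merely that $(f')^p\in S[y^p]$). In the divided--difference route, the identity $Q(Y,y)^p=\widetilde Q(Y^p,y^p)$ and the Taylor expansion of $Q(f,y)$ only re-derive $(f')^p\in S[y^p,f]$, which you already observed is trivial since $(f')^p$ is a $p$-th power and hence lies in $S[y^p]$; and multiplying the relation $f(Y)-f(y)=(Y-y)Q(Y,y)$ by $y^j$ before raising to the $p$-th power produces the factor $y^{jp}\in S[y^p]$, not $y^j$, so the ``companion identities'' do not yield $(f')^py^j$. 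In the inductive route, the case $p\nmid n$ is exactly the hard case, and the assertion that ``the combinatorics arrange so that $(f')^p$ is exactly the power required'' is the entire content of the theorem; you give no mechanism for reducing the degree when the leading coefficient $a$ is not a unit (one cannot subtract $S[y^p]$-multiples of $f$ to kill the top term without inverting $a$), and nothing in the sketch identifies why the accumulated factor is $(f')^p$ rather than, say, a power of $a$ or the determinant $\det M$ you computed. Your own examples with $\det M$ show the conductor is subtle, but identifying the obstacle is not the same as overcoming it.

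For comparison, the paper's proof supplies two non-trivial inputs that are absent from your sketch. First, for a \emph{monic} $g=y^d+\sum_{i=1}^{d-1}\xi_iy^i$ with $p\nmid d$ and $\xi_i$ indeterminates, it applies Dedekind's conductor theorem (Lemma~\ref{lem:conductor}) to $R=S[g]$, $z=y^p$ and the minimal polynomial $\Phi(T)=T^d+\sum_i\xi_i^pT^i-g^p$, whose derivative is $\Phi'(y^p)=(g')^p$; separability is exactly where $p\nmid d$ enters, and this gives $(g')^pS[y]\subset S[y^p,g]$. Second, to pass from the monic generic case to an arbitrary $f=\sum_{i=1}^du_iy^i$ one sets $g=u_d^{-1}f$ and must clear the denominators $u_d$; this requires the quantitative bound that in $(g')^py^l=\sum_{i=0}^{p-1}f_{p-i}g^i$ the coefficient $f_{p-i}$ has total degree at most $p-i$ in the $\xi_j=u_j/u_d$, which the paper proves by a monomial-order argument. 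Without an analogue of the conductor step and of this degree estimate, the proposal does not constitute a proof.
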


We make use of 
the following well-known fact (cf.~\cite[Theorem 12.1.1]{HS}). 

\begin{lem}\label{lem:conductor}
Let $R$ be an integrally closed domain, 
$L$ an algebraic extension of $Q(R)$, 
$z\in L$, 
and $\ol{R[z]}$ the integral closure of $R[z]$ 
in $Q(R[z])$. 
If $z$ separable over $Q(R)$, 
and the minimal polynomial $\Phi (T)$ of $z$ over $Q(R)$ 
lies in $R[T]$, 
then we have $\Phi '(z)\ol{R[z]}\subset R[z]$. 
Namely, 
$\Phi '(z)$ is in the conductor of $R[z]$. 

\end{lem}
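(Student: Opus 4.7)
The plan is to deduce the theorem from Lemma~\ref{lem:conductor} via a two-step reduction: first to the monic case, then to a universal monic setup where Lemma~\ref{lem:conductor} applies cleanly.

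For the first step, given $f\in S[y]$ of $y$-degree $n$, choose $N$ with $pN>n$ and set $g:=f+y^{pN}$. Then $g$ is monic of degree $pN$; in characteristic $p$ we have $g'=f'$ because $(y^{pN})'=pN\,y^{pN-1}=0$; and $S[y^p,g]=S[y^p,f]$ because $y^{pN}\in S[y^p]$. Hence the statement for $g$ implies the statement for $f$, and we may assume $f$ is monic. For the second step, specialization reduces the monic case to the universal one: setting $S_0:=\bF _p[b_0,\ldots,b_{n-1}]$ and $f_{\mathrm{univ}}:=y^n+b_{n-1}y^{n-1}+\cdots+b_0$, any monic $f\in S[y]$ of degree $n$ arises by sending $b_i$ to the $y^i$-coefficient of $f$, and the resulting $\bF _p$-algebra map $S_0[y]\to S[y]$ together with the $S$-linearity of both sides transports $(f_{\mathrm{univ}}')^p\cdot S_0[y]\subset S_0[y^p,f_{\mathrm{univ}}]$ to $(f')^p\cdot S[y]\subset S[y^p,f]$.

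In the universal monic setting I apply Lemma~\ref{lem:conductor} with $R:=S_0[f_{\mathrm{univ}}]$, which is a polynomial ring in $n+1$ indeterminates $b_0,\ldots,b_{n-1},f_{\mathrm{univ}}$ over $\bF _p$ (hence an integrally closed domain), $L:=Q(S_0[y])$, and $z:=y^p$. The element $y$ satisfies $\Phi(T):=T^n+b_{n-1}T^{n-1}+\cdots+b_0-f_{\mathrm{univ}}\in R[T]$, which is irreducible over $Q(R)$ (it is linear in the indeterminate $f_{\mathrm{univ}}$) and separable over $Q(R)$ (the term $b_1 T$ forces $\Phi\notin Q(R)[T^p]$); so $y$, and therefore $z=y^p$, has degree $n$ over $Q(R)$. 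Writing $y=y_1,\ldots,y_n$ for the Galois conjugates of $y$ in a separable closure, the injectivity of Frobenius ensures the $y_i^p$ are distinct, so the minimal polynomial of $z$ over $Q(R)$ is $\Psi(T):=\prod_{i=1}^n(T-y_i^p)$; its coefficients lie in $Q(R)$ by Galois invariance and are integral over $R$ as symmetric functions of the $y_i^p$ (themselves integral over $R$ via the monic $\Phi$), so by integral closedness they lie in $R$. Moreover $R[z]=S_0[y^p,f_{\mathrm{univ}}]$ and its integral closure in $Q(S_0[y])$ is $S_0[y]$, since $y$ satisfies the monic polynomial $T^p-y^p\in R[z][T]$ and $S_0[y]$ is a polynomial ring.

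The decisive calculation is $\Psi'(z)=(f_{\mathrm{univ}}')^p$: since $(a-b)^p=a^p-b^p$ in characteristic $p$, one has
\[
\Psi'(y^p)=\prod_{i\ne 1}(y^p-y_i^p)=\Bigl[\prod_{i\ne 1}(y-y_i)\Bigr]^p=[\Phi'(y)]^p=(f_{\mathrm{univ}}')^p,
\]
where the second equality uses $(a-b)^p=a^p-b^p$, the third uses that $\Phi$ is monic with roots $y_1,\ldots,y_n$, and the fourth uses $\Phi'(y)=f_{\mathrm{univ}}'$ by direct differentiation. Lemma~\ref{lem:conductor} then yields $(f_{\mathrm{univ}}')^p\cdot S_0[y]\subset S_0[y^p,f_{\mathrm{univ}}]$, completing the argument. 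The main delicate point will be verifying that $\Psi\in R[T]$---this is where the integral closedness of $R$ and the monicity of $\Phi$ are both essential---together with the irreducibility and separability of $\Phi$ over $Q(R)$; reducing to the universal monic form is precisely what secures these structural features.
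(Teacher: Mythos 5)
What you have written is not a proof of Lemma~\ref{lem:conductor}: it is a proof of Theorem~\ref{thm:Dedekind} that invokes Lemma~\ref{lem:conductor} as a black box, i.e.\ it assumes precisely the statement it was supposed to establish. The paper gives no proof of this lemma either --- it is quoted as a known fact (Dedekind's theorem on the conductor, \cite[Theorem 12.1.1]{HS}) --- so there is no internal argument to compare against, but as it stands your text leaves the target statement entirely unproven. If a proof is wanted, the standard one is short. Let $n=\deg \Phi $ and write $\Phi (T)=(T-z)\sum _{i=0}^{n-1}b_iT^i$ with $b_i\in R[z]$; this is possible because $\Phi $ is monic and lies in $R[T]$, so division by $T-z$ can be carried out in $R[z][T]$. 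By Euler's formulas for the separable extension $Q(R[z])=Q(R)(z)$ of $Q(R)$, the elements $b_i/\Phi '(z)$ form the dual basis of $1,z,\ldots ,z^{n-1}$ with respect to the trace form, so every $w\in Q(R[z])$ satisfies $\Phi '(z)w=\sum _{i=0}^{n-1}\mathrm{Tr}(wb_i)\,z^i$. If $w\in \ol{R[z]}$, then $wb_i$ is integral over $R$, hence so is $\mathrm{Tr}(wb_i)$, being a sum of conjugates of an integral element; since $\mathrm{Tr}(wb_i)\in Q(R)$ and $R$ is integrally closed, each $\mathrm{Tr}(wb_i)$ lies in $R$, and therefore $\Phi '(z)w\in R[z]$.

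Read instead as a blind proof of Theorem~\ref{thm:Dedekind}, your argument is correct and genuinely different from the paper's: the substitution $f\mapsto f+y^{pN}$ reduces at once to the monic case (since $(y^{pN})'=0$ and $y^{pN}\in S[y^p]$), and in the universal monic situation separability comes for free from the generic coefficient $b_1$, while the identity $\Psi '(y^p)=\Phi '(y)^p=(f_{\mathrm{univ}}')^p$ follows from the conjugate description. This bypasses both the normalization $p\nmid d$ and the degree-counting of Lemma~\ref{lem:Dedek1}~(ii), which the paper needs in order to clear the possibly non-invertible leading coefficient when specializing. But none of this addresses Lemma~\ref{lem:conductor} itself, which is the statement at issue.
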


When $f$ is monic, 
it is not difficult to derive Theorem~\ref{thm:Dedekind} 
from Lemma~\ref{lem:conductor}. 
However, 
for the general case, 
an additional argument is needed.

\begin{lem}\label{lem:Dedek1}
Let $S:=\bF _p[\xi _1,\ldots ,\xi _{d-1}]$ be the polynomial ring 
in $d-1$ variables over $\bF _p$ with $p\nmid d$, 
and let $g:=y^d+\sum _{i=1}^{d-1}\xi _iy^i\in S[y]$. 
Then, for each $l\ge 0$, 
there exist $f_1,\ldots ,f_p\in S[y^p]$ 
such that 
{\rm (i)} $(g')^py^l=\sum _{i=0}^{p-1}f_{p-i}g^i$, 
and 
{\rm (ii)} the total degree of 
$f_{p-i}$ in $\xi _1,\ldots ,\xi _{d-1}$ 
is at most $p-i$ for $i=0,\ldots ,p-1$. 
\end{lem}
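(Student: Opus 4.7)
The plan is to proceed by induction on $l$.

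For the base case $l = 0$: the Frobenius in characteristic $p$ gives
\[
(g')^p = \Bigl(dy^{d-1}+\sum_{i=1}^{d-1}i\xi_i y^{i-1}\Bigr)^p = du^{d-1}+\sum_{i=1}^{d-1}i\xi_i^p u^{i-1} \in S[u],
\]
where $u := y^p$. Its total $\xi$-degree is at most $p$ (achieved by the terms $i\xi_i^p u^{i-1}$), so setting $f_p = (g')^p$ and $f_{p-i} = 0$ for $i \geq 1$ verifies both (i) and (ii).

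For the inductive step when $l \geq d$, I would use the defining identity $y^d = g - \sum_{j=1}^{d-1}\xi_j y^j$ to write
\[
(g')^p y^l = g \cdot (g')^p y^{l-d} - \sum_{j=1}^{d-1}\xi_j \cdot (g')^p y^{l-d+j}.
\]
All indices on the right lie in $\{l-d,\ldots,l-1\}$, strictly below $l$, so the inductive hypothesis supplies the expansions. Substituting, multiplying out, and reducing the single $g^p$ term via $g^p = c(u) := u^d + \sum_{i=1}^{d-1}\xi_i^p u^i \in S[u]$ produces the recursion
\[
f^{(l)}_p = f^{(l-d)}_1 c(u) - \sum_j \xi_j f^{(l-d+j)}_p, \qquad f^{(l)}_{p-i} = f^{(l-d)}_{p-i+1} - \sum_j \xi_j f^{(l-d+j)}_{p-i}\ \ (i\geq 1).
\]
For $i \geq 1$, the bound $\deg_\xi f^{(l)}_{p-i} \leq p-i$ is immediate from the inductive hypothesis on the right-hand side.

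The residual base cases $l = 1,\ldots,d-1$ (where $l-d<0$ renders the reduction unavailable) I would handle via Lemma~\ref{lem:conductor}, applied with $R := S[g] \cong \bF_p[\xi_1,\ldots,\xi_{d-1},g]$ (a polynomial ring, hence integrally closed), $z := u$, and minimal polynomial $\tilde\Phi(T) := T^d + \sum_{i=1}^{d-1}\xi_i^p T^i - g^p \in R[T]$ of $u$ over $Q(R)$. Here $\tilde\Phi(u) = 0$ because Frobenius gives $g^p = u^d + \sum\xi_i^p u^i$, and $\tilde\Phi'(u) = (g')^p$ is nonzero (as $p \nmid d$), so $u$ is separable over $Q(R)$. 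Since $S[y]$ is integrally closed and integral over $R[u] = S[g,u]$ (via $y^p = u$), it equals $\overline{R[u]}$, and Lemma~\ref{lem:conductor} yields $(g')^p S[y] \subset S[g,u]$. This establishes (i) for these $l$; the bound (ii) is then verified by direct expansion in the $S[u]$-basis $\{1,g,\ldots,g^{p-1}\}$ of $S[g,u]$.

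The main obstacle will be the bound $\deg_\xi f^{(l)}_p \leq p$ in the inductive step, since the naive estimate from the recursion gives only $p+1$: the product $f^{(l-d)}_1 c(u)$ contains a $\xi$-degree-$(p+1)$ piece (the degree-$1$ part of $f^{(l-d)}_1$ times the degree-$p$ part $c_p := \sum_i \xi_i^p u^i$ of $c(u)$), and $\sum_j \xi_j f^{(l-d+j)}_p$ contributes another. The required bound $\leq p$ forces these two top-degree pieces to cancel, which will necessitate strengthening the inductive hypothesis with an additional invariant linking the $\xi$-degree-$p$ part of $f^{(l')}_p$ to the $\xi$-degree-$1$ part of $f^{(l')}_1$ for each $l' < l$. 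Formulating and maintaining this strengthened invariant through the recursion is the technical heart of the argument.
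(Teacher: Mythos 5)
Your argument for part (i) is fine --- in fact the conductor argument you invoke for the residual cases $l=1,\ldots,d-1$ already gives $(g')^pS[y]\subset S[y^p,g]$ for \emph{all} of $S[y]$ at once (this is exactly what the paper does), so the induction on $l$ buys you nothing for (i). The genuine gap is in part (ii), and it is larger than you acknowledge. You claim the bound $\deg_\xi f^{(l)}_{p-i}\le p-i$ is ``immediate'' for $i\ge 1$ from the recursion $f^{(l)}_{p-i}=f^{(l-d)}_{p-i+1}-\sum_j\xi_jf^{(l-d+j)}_{p-i}$, but it is not: the inductive bound on $f^{(l-d)}_{p-i+1}$ is $p-i+1$, and $\xi_jf^{(l-d+j)}_{p-i}$ also has bound $1+(p-i)=p-i+1$, so \emph{every} component of the recursion is off by one, not just $f_p$. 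The ``strengthened invariant'' you defer to would therefore have to control the top $\xi$-degree piece of every $f^{(l)}_{p-i}$ simultaneously, and you neither formulate it nor prove it; likewise the bound (ii) for $l=1,\ldots,d-1$ is only asserted (``verified by direct expansion''). As it stands the proposal proves (i) but not (ii).

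For comparison, the paper avoids the recursion entirely and proves (ii) by a leading-term argument. Put on $S[y]=\bF_p[\xi_1,\ldots,\xi_{d-1},y]$ the monomial order that compares total $\xi$-degree first and then uses lex; then $\lt(g)=\xi_1y$ and $\lt((g')^py^l)=\xi_1^py^l$, of $\xi$-degree exactly $p$. If some $f_{p-i}$ had $\xi$-degree $>p-i$, the maximum $m$ of the $\lt(f_{p-i}g^i)=\lt(f_{p-i})\xi_1^iy^i$ would have $\xi$-degree $>p$ and hence could not be the leading term of the sum, so at least two summands would have to share the leading monomial $m$. But $f_{p-i}\in S[y^p]$ forces $\deg_y\lt(f_{p-i}g^i)\equiv i\pmod p$, so two summands with $0\le i_1<i_2<p$ can never share a leading monomial. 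This impossibility of cancellation at the top is precisely the missing idea in your plan: rather than engineering the cancellation you predict, one shows it cannot occur, so the excessive degree never arises.
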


\begin{proof}
(i) Since $g^p\in S[y^p]$, 
we have $S[y^p,g]=\sum _{i=0}^{p-1}S[y^p]g^i$. 
Hence, 
it suffices to show that $(g')^py^l\in S[y^p,g]$. 
We prove that $(g')^pS[y]\subset S[y^p,g]$ 
using Lemma~\ref{lem:conductor} with $R:=S[g]$, $z:=y^p$ 
and $\Phi (T):=T^d+\sum _{i=1}^{d-1}\xi _i^pT^i-g^p\in S[g][T]$. 
Note that $\Phi (y^p)=0$ and $\Phi '(y^p)=(g')^p$. 
Hence, 
it suffices to 
check the following:

\nd (1) $S[g]$ is an integrally closed domain. 

\nd (2) 
$S[y]\subset Q(S[g,y^p])$, and $S[y]$ is integral over $S[g,y^p]$.

\nd (3) 
$\Phi (T)$ is an irreducible polynomial in $T$ over $Q(S[g])$.

Actually, 
(3) implies that $\Phi (T)$ 
is the minimal polynomial of $y^p$ over $Q(S[g])$. 
Since $p\nmid d$ by assumption, 
it follows that $y^p$ is separable over $Q(S[g])$.

Since $S[g]$ is a polynomial ring 
in $d$ variables over $\bF _p$, 
it is an integrally closed domain, 
hence (1). 
Since $[Q(S[y]):Q(S[y^p])]=p$ and $g\in Q(S[y])\sm Q(S[y^p])$, 
we have $Q(S[y^p,g])=Q(S[y])\supset S[y]$. 
Since $y$ is integral over $S[y^p,g]$, 
we get (2). 
Since $\gcd (d,p)=1$, 
we see that $\Phi (T)$ is an irreducible 
element of $Q(S)[g,T]\simeq Q(S)[x_1,x_2]$. 
Hence, 
the polynomial $\Phi (T)$ in $T$ is irreducible 
over $Q(S)[g]$, 
and thus over $Q(S[g])$ by Gauss's lemma. 
This proves (3).

(ii) 
We define a monomial order on $S[y]=\bF _p[\xi _1,\ldots ,\xi _{d-1},y]$ by 
$\xi _1^{i_1}\cdots \xi _{d-1}^{i_{d-1}}y^{i_d}
\succeq \xi _1^{j_1}\cdots \xi _{d-1}^{j_{d-1}}y^{j_d}$ 
if $\sum _{l=1}^{d-1}i_l>\sum _{l=1}^{d-1}j_l$, 
or $\sum _{l=1}^{d-1}i_l=\sum _{l=1}^{d-1}j_l$ 
and $(i_1,\ldots ,i_d)\ge _{\rm lex}(j_1,\ldots ,j_d)$, 
where $\ge _{\rm lex}$ is the lexicographic order. 
We denote by $\lt (h)$ the leading term of $h\in S[y]$ for this order. 
Then, we have 
$\lt (g)=\xi _1y$ and $\lt (g')=\xi _1$. 
Let $m:=\xi _1^{k_1}\cdots \xi _{d-1}^{k_{d-1}}y^{k_d}$ 
be the maximum among 
$\lt (f_{p-i}g^i)=\lt (f_{p-i})\xi _1^iy^i$ for $0\le i<p$, 
and $J$ the set of $0\le i<p$ with $\lt (f_{p-i}g^i)=m$.

Now, 
suppose that the total degree of $f_{p-i}$ 
in $\xi _1,\ldots ,\xi _{d-1}$ 
is greater than $p-i$ for some $i$. 
Then, 
that of $\lt (f_{p-i})\xi _1^iy^i$ is greater than $p$. 
Hence, 
$\sum _{l=1}^{d-1}k_l>p$ holds by maximality. 
Thus, 
$\lt ((g')^py^l)=\xi _1^py^l$ cannot be $m$. 
Since $(g')^py^l=\sum _{i=0}^{p-1}f_{p-i}g^i$, 
this implies that $|J|\ge 2$, 
for otherwise 
$\lt (\sum _{i=0}^{p-1}f_{p-i}g^i)=m$. 
Choose $i_1,i_2\in J$ with $i_1<i_2$. 
Then, 
we have 
$\lt (f_{p-i_1})\xi _1^{i_1}y^{i_1}=
\xi _1^{k_1}\cdots \xi _{d-1}^{k_{d-1}}y^{k_d}
=\lt (f_{p-i_2})\xi _1^{i_2}y^{i_2}$. 
It follows that 
$\deg _y\lt (f_{p-i_l})=k_d-i_l$ 
for $l=1,2$. 
Since $f_{p-i_l}$ is in $S[y^p]$, 
we have $k_d-i_l\in p\Z $. 
Thus, $i_2-i_1$ is in $p\Z $. 
This contradicts that $0\le i_1<i_2<p$. 
\end{proof}

\begin{proof}[Proof of Theorem~$\ref{thm:Dedekind}$]
Observe that $(f+h)'=f'$ and $S[y^p,f+h]=S[y^p,f]$ 
for any $h\in S[y^p]$. 
Hence, 
replacing $f$ with $f+h$ for some $h\in S[y^p]$, 
we may assume that $f(0)=0$ 
and $d:=\deg f\not\in p\Z $. 
We may also assume that $f\ne 0$. 
Now, write $f=\sum _{i=1}^du_iy^i$, 
where $u_i\in S$. 
To show $(f')^pS[y]\subset S[y^p,f]$, 
it suffices to verify that 
$(f')y^l\in \bF _p[u_1,\ldots ,u_d,y^p,f]$ 
for each $l\ge 0$. 
For this purpose, 
we may assume that $u_1,\ldots ,u_d$ 
are algebraically independent over $\bF _p$. 
Set $\xi _i=u_i/u_d$ for $1\le i<d$ and 
$g:=u_d^{-1}f=y^d+\sum _{i=1}^{d-1}\xi _iy^i$. 
Choose $f_1,\ldots ,f_p\in \bF _p[\xi _1,\ldots ,\xi _{d-1},y^p]$ 
as in Lemma~\ref{lem:Dedek1}. 
Then, 
since $f=u_dg$ and $f'=u_dg'$, 
we have 
$(f')^py^l=u_d^p(g')^py^l
=u_d^{p}\sum _{i=0}^{p-1}f_{p-i}g^i
=\sum _{i=0}^{p-1}u_d^{p-i}f_{p-i}f^i$ by (i). 
By (ii), 
$u_d^{p-i}f_{p-i}$ is in $\bF _p[u_1,\ldots ,u_d,y^p]$ for each $i$. 
Therefore, 
$(f')y^l$ belongs to $\bF _p[u_1,\ldots ,u_d,y^p,f]$. 
\end{proof}

\subsection{Invariant ring: generators}
\label{sect:Nagata invariant ring1}

In \S \ref{sect:Nagata invariant ring1}, 
we determine the generators of $R[x,y]^{\phi }$. 
The main task is to construct a new element $q_2\in R[x,y]^{\phi }$. 
For this purpose, 
we need Theorem~\ref{thm:Dedekind}.

Throughout \S \ref{sect:Nagata invariant ring1}, 
let $\ol{R}:=R/bR$, 
and $\ol{h}$ denotes the image of $h\in R[x,y]$ in $\ol{R}[x,y]$. 
Recall that $\ol{q}=y^p$ and $\ol{q}_1=\ol{\rho }(y)$ by ($\dag $) 
after Lemma~\ref{lem:gy-h}.

\begin{rem}\label{rem:R[bx,y]}\rm 
(i) 
For $h\in R[x,y]$, 
we have $\deg _x\ol{h}\le 0$ if $h\in R[bx,y]$, 
and $\deg _x\ol{h}\le 1$ 
if $h\in b^{-1}R[bx,y]\cap R[x,y]$.

\nd (ii) 
Since $f \in R[bx,y]$, 
we have $F,q,q_1\in R[bx,y]$ 
(cf.~(\ref{eq:Nagata q}) and (\ref{eq:q_1 Nagata})). 
\end{rem}

Set $\xi (T):=q_1-\rho (T)\in R[q_1][T]$. 
Then, 
by (\ref{eq:q_1 Nagata}), 
we have 
$\xi (y)=bw$ for some 
\begin{equation}\label{eq:w Nagata}
w\in x+(db)^{p-2}F^{p-1}yR[f,aF,y]
\subset x+R[bx,y]. 
\end{equation}
Since $\phi (y)=y+bdF$, 
we can use Remark~\ref{rem:q_1} with 
$(a,b,c,S)=(b,dF,y,R[q_1])$. 
Set $\xi ^p(T):=q_1^p-\rho ^p(T)$, 
where $\rho ^p(T):=\sum _{p\nmid i}t_i^pT^i$. 
Then, 
by (\ref{eq:q_1}), 
$R[x,y]^{\phi }$ contains 
\begin{equation}\label{eq:tilde q_1 Nagata}
\widetilde{q}_1:=b^{1-p}(q_1^p-\rho ^p(q))
=b^{1-p}\xi ^p(q)
\in 
bw^p+\rho '(y)^p{\cdot }(dF)^{p-1}y+b^{p-1}R[bx,y], 
\end{equation}
where we use $F,q_1\in R[bx,y]$. 
By Theorem~\ref{thm:Dedekind}, 
there exists $\lambda (x,y)\in R[x,y]$ 
such that $\rho '(y)^py=\lambda (y^p,\rho (y))$. 
Then, 
$v:=(\rho '(y)^py-\lambda (q,q_1))(dF)^{p-1}$ belongs to $bR[x,y]$ 
by ($\dag $) after Lemma~\ref{lem:gy-h}, 
and to 
$R[bx,y]$ 
by Remark~\ref{rem:R[bx,y]} (ii). 
Hence, 
$b^{-1}v$ belongs to $R[x,y]\cap b^{-1}R[bx,y]$. 
Thus, 
by (\ref{eq:tilde q_1 Nagata}) and (\ref{eq:w Nagata}), 
we see that 
\begin{equation}\label{eq:q_2 Nagata}
\begin{aligned}
q_2:=b^{-1}(\widetilde{q}_1-\lambda (q,q_1)(dF)^{p-1})
&\in w^p+b^{-1}v+b^{p-2}R[bx,y] \\
&\subset x^p+R[bx,y]+R[x,y]\cap b^{-1}R[bx,y]. 
\end{aligned}
\end{equation}
Since 
$\widetilde{q}_1$ and $\lambda (q,q_1)(dF)^{p-1}$ are in $R[x,y]^{\phi }$, 
so is $q_2$. 
Therefore, we can define 
\begin{equation}\label{eq:Nagata sigma}
\sigma :\Ry \ni h(y_0,y_1,y_2)\mapsto h(q,q_1,q_2)\in R[x,y]^{\phi },\ 
\text{ where }\ 
\y :=\{ y_0,y_1,y_2\} .  
\end{equation}

\begin{example}
If $p\ge 3$ in Example~\ref{example:Nagata}, 
then $d=\gcd (z,2y)=1$, 
$b=a=z$, 
$\theta (y)=\rho (y)=\rho ^p(y)=y^2$ and $\theta ^*(y)=0$. 
Hence, we have 
$q_1=f$ and $\widetilde{q}_1=z^{1-p}(f^p-q^2)$. 
Since $\rho '(y)^py=2y^{p+1}$, 
we may take $\lambda (x,y)=2y^{(p+1)/2}$. 
Then, 
we get 
$q_2=z^{-1}\bigl(\widetilde{q}_1-2f^{(p+1)/2}f^{p-1}\bigr) $. 

\end{example}

\begin{rem}\label{rem:q_1 etc}\rm 
(i) We have $f\in R[q,q_1]$ by (\ref{eq:q_1 Nagata}), 
and so $\widetilde{q}_1\in R[q,q_1,q_2]$ by (\ref{eq:q_2 Nagata}).

\nd (ii) We have 
$\widetilde{q}_1,q_2\in R_b[q,q_1]$ 
by (\ref{eq:tilde q_1 Nagata}) and (\ref{eq:q_2 Nagata}), 
since $f\in R[q,q_1]$.

\nd (iii) $\ol{q}_2$ is a monic polynomial in $x$ of degree $p$ 
by (\ref{eq:q_2 Nagata}) and Remark~\ref{rem:R[bx,y]} (i).

\nd (iv) 
$q$ and $q_1$ are algebraically independent over $R_a$, 
because $R_a[x,y]$ is algebraic over 
$R_a[x,y]^{\widetilde{\phi }}=R_a[f,q]=R_a[q_1,q]$ 
(cf.~\S \ref{sect:Nagata construction}). 

\end{rem}

The following is the main result of \S \ref{sect:Nagata invariant ring1}. 

\begin{thm}\label{thm:Nagata2}
In the notation above, 
we have $R[x,y]^{\phi }=R[q,q_1,q_2]$. 
\end{thm}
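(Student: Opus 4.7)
The strategy is to combine Theorem~\ref{thm:Nagata1} with a prime-by-prime application of Lemma~\ref{lem:intersection 2}. Set $A := R[q, q_1, q_2]$. Theorem~\ref{thm:Nagata1} combined with Remark~\ref{rem:q_1 etc}(ii) gives $R[x, y]^{\phi} = R_b[q, q_1] \cap R[x, y] = R_b[q, q_1, q_2] \cap R[x, y]$, and since $A \subset R[x, y]^{\phi}$ is clear, it suffices to establish $R_b[q, q_1, q_2] \cap R[x, y] = A$. Factoring $b = b_1^{e_1} \cdots b_r^{e_r}$ into distinct primes of $R$, one has $R_b = R[b_1^{-1}, \ldots, b_r^{-1}]$ by (\ref{eq:R_c}). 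A straightforward induction on $r$, using that each $b_j$ lies in $R \subset A$, reduces the task to showing $A[b_i^{-1}] \cap R[x, y] = A$ for each $i$.

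Fix $i$ and apply Lemma~\ref{lem:intersection 2} with $R' := R$, $B := R[x, y]$, $b := b_i$, and $\sigma$ as in (\ref{eq:Nagata sigma}); let $\hat{\sigma} : \ol{R}[\y] \to \ol{R}[x, y]$ be the induced map modulo $b_i$, where $\ol{R} := R/b_i R$ is a domain because $R$ is a UFD and $b_i$ is prime. Take $\mathcal{S} := \{y_1^p - \rho^p(y_0)\}$. The required inclusion $\sigma(\mathcal{S}) \subset b_i A$ follows from (\ref{eq:tilde q_1 Nagata}) rearranged as $\sigma(y_1^p - \rho^p(y_0)) = q_1^p - \rho^p(q) = b^{p-1}\widetilde{q}_1$, together with $\widetilde{q}_1 \in A$ (Remark~\ref{rem:q_1 etc}(i)) and $p \geq 2$.

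The main obstacle will be showing that the image of $\mathcal{S}$ in $\ol{R}[\y]$ generates $\ker \hat{\sigma}$. By $(\dag)$ and Remark~\ref{rem:q_1 etc}(iii), $\hat{\sigma}$ sends $y_0 \mapsto y^p$, $y_1 \mapsto \ol{\rho}(y)$, and $y_2 \mapsto \ol{q}_2$, a monic polynomial in $x$ of degree $p$ over $\ol{R}[y]$. Injectivity of the induced morphism $\ol{R}[\y]/(y_1^p - \ol{\rho}^p(y_0)) \to \ol{R}[x, y]$ proceeds in two stages. Remark~\ref{rem:NGT} gives $\gcd(b, \rho'(y)) = 1$, so $\ol{\rho}'(y) \neq 0$ and hence $\ol{\rho}(y) \notin \ol{R}[y^p]$; combined with the characteristic-$p$ identity $\ol{\rho}(y)^p = \ol{\rho}^p(y^p)$, this forces the minimal polynomial of $\ol{\rho}(y)$ over $Q(\ol{R})(y^p)$ to equal $T^p - \ol{\rho}^p(y^p)$, whose irreducibility stems from the observation that $\ol{\rho}^p(y^p)$ contains monomials $y^{pi}$ with $p \nmid i$, whereas every element of $Q(\ol{R})(y^p)^p$ only involves monomials in $y^{p^2}$. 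Consequently $\ol{R}[y_0, y_1]/(y_1^p - \ol{\rho}^p(y_0)) \hookrightarrow \ol{R}[y^p, \ol{\rho}(y)]$ is injective. Adjoining $y_2 \mapsto \ol{q}_2$ preserves injectivity because $\ol{q}_2$, monic in $x$ of degree $p$, is transcendental over $\ol{R}[y]$ and hence over its subring $\ol{R}[y^p, \ol{\rho}(y)]$. Therefore $\ker \hat{\sigma} = (y_1^p - \ol{\rho}^p(y_0))\ol{R}[\y]$, and Lemma~\ref{lem:intersection 2} yields $A[b_i^{-1}] \cap R[x, y] = A$, completing the proof.
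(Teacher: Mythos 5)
Your proposal is correct, and it reaches the same reduction as the paper (Theorem~\ref{thm:Nagata1} plus Remark~\ref{rem:q_1 etc}(ii), then Lemma~\ref{lem:intersection 2} with $\mathcal{S}=\{y_1^p-\rho ^p(y_0)\}$, using $\sigma (y_1^p-\rho ^p(y_0))=b^{p-1}\widetilde{q}_1$ and the monicity of $\ol{q}_2$ in $x$), but it computes the kernel of $\widehat{\sigma }$ by a genuinely different route. The paper applies Lemma~\ref{lem:intersection 2} once, modulo $b$ itself; since $R/bR$ need not be reduced or a domain, it must prove the auxiliary Lemma~\ref{lem:remainder Nagata}, whose derivative argument hinges on $\ol{\rho }'(y)$ being a non-zero-divisor of $(R/bR)[y]$. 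You instead factor $b$ into primes and localize one prime at a time; your induction on the prime factors is valid (each $b_j$ lies in $R\subset A$, so inverting them one at a time and intersecting back with $R[x,y]$ works), and modulo a prime $b_i$ the quotient $R/b_iR$ is a domain, which lets you replace the derivative lemma by field theory: $[Q(\ol{R})(y):Q(\ol{R})(y^p)]=p$ together with $\ol{\rho }'(y)\ne 0$ forces $T^p-\ol{\rho ^p}(y^p)$ to be the minimal polynomial of $\ol{\rho }(y)$, and adjoining $y_2\mapsto \ol{q}_2$ preserves injectivity exactly as in the paper's leading-coefficient argument. You do need to note that Remark~\ref{rem:q_1 etc}(iii) and $(\dag )$, stated modulo $b$, also hold modulo each $b_i$ (they do, since $b_i\mid b$ and the same computation with (\ref{eq:q_2 Nagata}) applies). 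Two small imprecisions, neither fatal: the justification of irreducibility via ``$Q(\ol{R})(y^p)^p$ only involves monomials in $y^{p^2}$'' is not quite right when $\ol{R}$ is imperfect (the clean argument is that $c\in Q(\ol{R})(y^p)^p$ would give $c=d^p=\ol{\rho }(y)^p$, hence $d=\ol{\rho }(y)\in Q(\ol{R})(y^p)$ by injectivity of Frobenius, a contradiction), and irreducibility is anyway redundant once you have the degree-$p$ minimal polynomial; and the ``consequently'' before the injectivity of $\ol{R}[y_0,y_1]/(y_1^p-\ol{\rho ^p}(y_0))\hookrightarrow \ol{R}[y^p,\ol{\rho }(y)]$ silently uses division with remainder by the monic $y_1^p-\ol{\rho ^p}(y_0)$ plus transcendence of $y^p$ over $\ol{R}$. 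What your approach buys is avoiding the separate Lemma~\ref{lem:remainder Nagata} and all reasoning over a non-reduced ring; what it costs is the extra bookkeeping of the prime-by-prime induction.
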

\begin{proof}
Since $q_2\in R_b[q,q_1]$ by Remark~\ref{rem:q_1 etc} (ii), 
we have $R_b[q,q_1]=R_b[q,q_1,q_2]$. 
Hence, 
by Theorem~\ref{thm:Nagata1}, 
it suffices to show that 
$R_b[q,q_1,q_2]\cap R[x,y]=R[q,q_1,q_2]$. 
Let $\widehat{\sigma }:\ol{R}[\y ]\to \ol{R}[x,y]$ 
be the substitution map induced by $\sigma $ in (\ref{eq:Nagata sigma}). 
We show that 
$\ker \widehat{\sigma }=(y_1^p-\ol{\rho ^p}(y_0))$. 
Then, the assertion follows by Lemma~\ref{lem:intersection 2}, 
since $\sigma (y_1^p-\rho ^p(y_0))
=q_1^p-\rho ^p(q)=b^{p-1}\widetilde{q}_1$ 
by (\ref{eq:tilde q_1 Nagata}), 
and $\widetilde{q}_1\in R[q,q_1,q_2]$ by Remark~\ref{rem:q_1 etc}~(i).

Since $\ol{q}=y^p$ and $\ol{q}_1=\ol{\rho }(y)$ by ($\dag $), 
we have $\widehat{\sigma }(y_1^p-\ol{\rho ^p}(y_0))=
\ol{\rho }(y)^p-\ol{\rho ^p}(y^p)=0$. 
To show 
$\ker \widehat{\sigma }\subset (y_1^p-\ol{\rho ^p}(y_0))$, 
pick any $\eta =\sum _i\eta _iy_2^i\in \ker \widehat{\sigma }$, 
where $\eta _i\in \ol{R}[y_0,y_1]$. 
We claim that 
$\widehat{\sigma }(\eta _i)=\eta _i(y^p,\ol{\rho }(y))=0$ 
for all $i$. 
In fact, 
if $\{ i\mid \widehat{\sigma }(\eta _i)\ne 0\} \ne \emptyset $, 
and $j:=\max \{ i\mid \widehat{\sigma }(\eta _i)\ne 0\} $, 
then 
$\widehat{\sigma }(\eta )=
\sum _i\widehat{\sigma }(\eta _i)\ol{q}_2^i
=\eta _j(y^p,\ol{\rho }(y))x^{jp}+\cdots \ne 0$ 
by Remark~\ref{rem:q_1 etc} (iii), 
a contradiction. 
Hence, 
we may assume that $\eta \in \ol{R}[y_0,y_1]$. 
Let $\nu \in \ol{R}[y_0,y_1]$ be the remainder of $\eta $ 
divided by $y_1^p-\ol{\rho ^p}(y_0)$ 
as a polynomial in $y_1$. 
Then, $\nu $ lies in $\ker \widehat{\sigma }$, 
since $\eta ,y_1^p-\ol{\rho ^p}(y_0)\in \ker \widehat{\sigma }$. 
By Lemma~\ref{lem:remainder Nagata} below, 
this implies that $\nu =0$, 
i.e., 
$\eta \in (y_1^p-\ol{\rho ^p}(y_0))$. 
\end{proof}

\begin{lem}\label{lem:remainder Nagata}
$\nu (y^p,\ol{\rho }(y))\ne 0$ holds for every 
$\nu \in \ol{R}[y_0,y_1]\sm \zs $ with $\deg _{y_1}\nu <p$. 
\end{lem}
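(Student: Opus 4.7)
The plan is to lift from $\ol{R}[y_0,y_1]$ up to $R[y_0,y_1]$ and induct on the number of prime factors of $b$, reducing modulo one prime factor at each step to exploit the degree-$p$ extension $K(y)/K(y^p)$ in characteristic $p$.

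First, I would reformulate. Writing $\nu=\sum_{i=0}^{p-1}\nu_i(y_0)y_1^i$ with $\nu_i\in\ol{R}[y_0]$, lift each $\nu_i$ to $\tilde\nu_i\in R[y_0]$. The hypothesis $\nu(y^p,\ol{\rho}(y))=0$ in $\ol{R}[y]$ then reads $\sum_{i=0}^{p-1}\tilde\nu_i(y^p)\rho(y)^i\in bR[y]$, and the desired conclusion $\nu=0$ in $\ol{R}[y_0,y_1]$ is equivalent to: each $\tilde\nu_i$ lies in $bR[y_0]$.

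I would then prove by induction the following more general claim: for any $b'\in R\sm\zs$ with $\gcd(b',\rho'(y))=1$ and any $\mu_0,\ldots,\mu_{p-1}\in R[y_0]$ satisfying $\sum_i\mu_i(y^p)\rho(y)^i\in b'R[y]$, each $\mu_i$ lies in $b'R[y_0]$. Taking $b'=b$ (valid by Remark~\ref{rem:NGT}) recovers the reformulated statement. The induction runs on the number of prime factors of $b'$ counted with multiplicity; the base case $b'\in R^*$ is immediate since then $b'R[y_0]=R[y_0]$. For the inductive step, pick a prime factor $b_1$ of $b'$. Since $b_1\mid b'$ and $\gcd(b',\rho'(y))=1$, the derivative $\rho'(y)$ is nonzero modulo $b_1$ in the domain $(R/b_1R)[y]$; because every exponent appearing in $\rho(y)$ is coprime to $p$, this forces the image $\widetilde{\rho}(y)$ to lie outside $K[y^p]$, where $K:=Q(R/b_1R)$ is a field of characteristic $p$. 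Since $[K(y):K(y^p)]=p$ is prime, the elements $1,\widetilde{\rho}(y),\ldots,\widetilde{\rho}(y)^{p-1}$ are $K(y^p)$-linearly independent, hence linearly independent over the image of $R[y^p]$ in $K[y]$.

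Reducing $\sum_i\mu_i(y^p)\rho(y)^i\in b'R[y]\subset b_1R[y]$ modulo $b_1$ and invoking this linear independence forces $\mu_i\in b_1R[y_0]$ for every $i$; writing $\mu_i=b_1\mu_i'$ and cancelling $b_1$ in the domain $R[y]$ produces $\sum_i\mu_i'(y^p)\rho(y)^i\in(b'/b_1)R[y]$. The factor $b'/b_1$ has one fewer prime factor and still satisfies $\gcd(b'/b_1,\rho'(y))=1$, so the induction hypothesis gives $\mu_i'\in(b'/b_1)R[y_0]$, whence $\mu_i\in b'R[y_0]$.

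The main technical point is the non-containment $\widetilde{\rho}(y)\notin K[y^p]$: this relies crucially on both halves of Remark~\ref{rem:NGT} — the shape of $\rho(y)$ (all exponents coprime to $p$) together with $\gcd(b,\rho'(y))=1$. Everything else is straightforward bookkeeping on prime factorization in the UFD $R$.
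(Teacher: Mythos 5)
Your proof is correct, but it takes a genuinely different route from the paper's. The paper argues by a minimal counterexample on $n=\deg _{y_1}\nu $ entirely inside $\ol{R}[y]$: differentiating the relation $\nu (y^p,\ol{\rho }(y))=0$ via the chain rule kills the $y_0$-part (because of the factor $py^{p-1}$) and leaves $\ol{\rho }'(y)\,\nu _{y_1}(y^p,\ol{\rho }(y))=0$; since $\gcd (b,\rho '(y))=1$ makes $\ol{\rho }'(y)$ a non-zero-divisor of $\ol{R}[y]$, this yields a counterexample of strictly smaller $y_1$-degree, a contradiction. You instead lift to $R$ and peel off the prime factors of $b$ one at a time, reducing to the domain $R/b_1R$ and invoking the linear independence of $1,\rho (y),\ldots ,\rho (y)^{p-1}$ over $K(y^p)$, where $K=Q(R/b_1R)$ and $[K(y):K(y^p)]=p$; your use of $\rho '(y)\not\equiv 0 \pmod{b_1}$ to certify $\rho (y)\notin K[y^p]$ (hence $\notin K(y^p)$, as $K[y]\cap K(y^p)=K[y^p]$) is sound, as are the cancellation of $b_1$ in the UFD $R[y]$ and the persistence of $\gcd (b'/b_1,\rho '(y))=1$. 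Both arguments rest on exactly the two facts in Remark~\ref{rem:NGT}. The paper's proof is shorter and works uniformly in the possibly non-reduced ring $\ol{R}[y]$ with no factorization of $b$; yours is longer but more structural, making explicit that the lemma reflects the degree-$p$ extension $K(y)/K(y^p)$ generated by $\rho (y)$ modulo each prime dividing $b$, at the cost of the lifting and prime-by-prime bookkeeping.
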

\begin{proof}
Suppose that 
there exists $\nu \in \ol{R}[y_0,y_1]\sm \zs $ 
with $n:=\deg _{y_1}\nu <p$ and $\nu (y^p,\ol{\rho }(y))=0$. 
Then, 
$\nu $ is not in $\ol{R}[y_0]$, 
i.e., 
$n\ge 1$. 
Hence, 
$\nu _{y_1}:=\partial \nu /\partial y_1$ is 
of $y_1$-degree $n-1\ge 0$. 
Choose $\nu $ with least $n$. 
By the chain rule, 
we have 
$$ 
0=(\nu (y^p,\ol{\rho }(y)))'
=py^{p-1}\nu _{y_0}(y^p,\ol{\rho }(y))+
\ol{\rho }'(y)\nu _{y_1}(y^p,\ol{\rho }(y))
=\ol{\rho }'(y)\nu _{y_1}(y^p,\ol{\rho }(y)).
$$
Recall that $\gcd (b,\rho '(y))=1$ (cf.~Remark~\ref{rem:NGT}). 
Hence, 
$\ol{\rho }'(y)$ is not a zero-divisor of $\ol{R}[y]$. 
Thus, we get 
$\nu _{y_1}(y^p,\ol{\rho }(y))=0$. 
This contradicts the minimality of $n$. 
\end{proof}

\subsection{Invariant ring: relation}
\label{sect:Nagata invariant ring2}

By Theorem~\ref{thm:Nagata2}, 
$\sigma $ in (\ref{eq:Nagata sigma}) is surjective. 
Since $f$ is in $R[q,q_1]$ by Remark~\ref{rem:q_1 etc}~(i), 
we can write $(dF)^{p-1}=\nu (q,q_1)$, 
where $\nu \in R[y_0,y_1]$. 
We define 
\begin{equation}\label{eq:Nagata relation}
\Lambda :=b^py_2+\rho ^p(y_0)-y_1^p+b^{p-1}\lambda (y_0,y_1)\nu (y_0,y_1). 
\end{equation}
Then, 
from (\ref{eq:tilde q_1 Nagata}) and (\ref{eq:q_2 Nagata}), 
we see that $\sigma (\Lambda )=0$. 
Hence, 
$\ker \sigma $ contains $(\Lambda )$.

\begin{thm}\label{thm:Nagata relation}
We have $\ker \sigma =(\Lambda )$. 
Hence, 
$R[x,y]^{\phi }$ is isomorphic to $\Ry /(\Lambda )$ 
as an $R$-algebra, 
where $\y =\{ y_0,y_1,y_2\} $.  
\end{thm}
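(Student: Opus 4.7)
The inclusion $(\Lambda)\subset \ker \sigma $ is immediate from the construction: the definitions of $q_1,\widetilde{q}_1,q_2$ (see (\ref{eq:q_1 Nagata}), (\ref{eq:tilde q_1 Nagata}), (\ref{eq:q_2 Nagata})) together with $(dF)^{p-1}=\nu (q,q_1)$ yield $\sigma (\Lambda )=0$. For the reverse inclusion, I plan to divide an arbitrary $\eta \in \ker \sigma $ by $\Lambda $ in the variable $y_2$ after inverting $b$, and then to descend back to $R[\y]$ by a torsion argument using the shape of $\Lambda $ modulo $b$. The isomorphism statement then follows from the surjectivity of $\sigma $ established in Theorem~\ref{thm:Nagata2}.

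Write $\Lambda =b^py_2+\mu $, where $\mu :=\rho ^p(y_0)-y_1^p+b^{p-1}\lambda (y_0,y_1)\nu (y_0,y_1)\in R[y_0,y_1]$. Over $R_b$, the element $b^p$ is a unit, so $\Lambda $ is (a unit times) a monic polynomial in $y_2$ of degree one over $R_b[y_0,y_1]$. Thus for $\eta \in \ker \sigma $, polynomial division gives $\eta =\Lambda h+r$ with $h\in R_b[\y]$ and $r\in R_b[y_0,y_1]$. Applying $\sigma $, I get $r(q,q_1)=0$. By Remark~\ref{rem:q_1 etc}~(iv), $q$ and $q_1$ are algebraically independent over $R_a$, hence over the subring $R_b\subset R_a$ (recall $a=bd$), so $r=0$ and therefore $\eta =\Lambda h$ in $R_b[\y]$.

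The main point that requires care is showing $h\in R[\y]$. I will choose the smallest integer $e\ge 0$ with $b^eh\in R[\y]$, set $g:=b^eh$, and suppose for contradiction that $e\ge 1$. Reducing the identity $b^e\eta =\Lambda g$ modulo $b$ in $(R/bR)[\y]$ gives
\begin{equation*}
0\equiv \bigl(\rho ^p(y_0)-y_1^p\bigr)\bar{g}\pmod{b},
\end{equation*}
since $b^p\equiv 0$ and $b^{p-1}\equiv 0$ annihilate the other terms of $\Lambda $ (here $p\ge 1$; even if $p=1$ the computation is consistent because then $b\eta =\Lambda g$ gives $\bar{\Lambda }=\rho(y_0)-y_1+\lambda \nu $ is still $y_1$-monic). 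The polynomial $\rho ^p(y_0)-y_1^p$ is monic in $y_1$ over $(R/bR)[y_0]$, so it is not a zero-divisor in $(R/bR)[\y]$. Consequently $\bar{g}=0$, i.e., $g\in bR[\y]$; writing $g=bg'$ and cancelling $b$ in the domain $R[\y]$ gives $b^{e-1}h=g'\in R[\y]$, contradicting the minimality of $e$. Hence $e=0$ and $\eta =\Lambda h\in (\Lambda )$.

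The step I expect to be the main obstacle is this descent from $R_b[\y]$ back to $R[\y]$; the trick is that $\Lambda \pmod{b}$ remains a $y_1$-monic (hence regular) element, which is why one can rule out $b$-torsion in $R[\y]/(\Lambda )$. Once $\ker \sigma =(\Lambda )$ is established, the isomorphism $R[x,y]^{\phi }\simeq R[\y]/(\Lambda )$ follows directly from Theorem~\ref{thm:Nagata2}.
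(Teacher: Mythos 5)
Your proposal is correct and follows essentially the same route as the paper: division by $\Lambda$ in $y_2$ over $R_b$ (using that $b^p$ is a unit there), vanishing of the remainder via the algebraic independence of $q,q_1$ (Remark~\ref{rem:q_1 etc}~(iv)), and then descent of the quotient from $R_b[\y]$ to $R[\y]$. The only difference is cosmetic: for the descent you reduce mod $b$ and use that $\ol{\Lambda}=\rho^p(y_0)-y_1^p$ is $y_1$-monic (hence a non-zero-divisor), whereas the paper phrases the same fact as $\gcd(\Lambda,b)=1$ in the UFD $R[\y]$; both are valid, and your aside about ``$p=1$'' is vacuous since $p$ is a prime characteristic.
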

\begin{proof}
Pick any $h\in \ker \sigma $. 
Noting $b^p\in R_b^*$, 
we can write $h=\Lambda h_1+h_0$, 
where $h_1\in R_b[\y ]$ and $h_0\in R_b[y_0,y_1]$. 
Then, 
we have $h_0(q,q_1)=0$, 
since $h,\Lambda \in \ker \sigma $. 
By Remark~\ref{rem:q_1 etc} (iv), 
this implies that $h_0=0$, i.e., $h=\Lambda h_1$. 
Since $h,\Lambda \in \Ry $, $h_1\in R_b[\y ]$ and 
$\gcd (\Lambda ,b)=\gcd (\rho ^p(y_0)-y_1^p,b)=1$, 
it follows that $h_1\in \Ry $. 
This proves $h\in (\Lambda )$. 
\end{proof}

In the rest of \S \ref{sect:Nagata invariant ring2}, 
we study the structure of 
$R[x,y]^{\phi }\simeq \Ry /(\Lambda )$.

\begin{thm}\label{thm:Nagata relation1}
If $I$ is a principal ideal of $R[x,y]$, 
then $\Lambda $ is a coordinate of $R[y_1][y_0,y_2]$, 
i.e., $\Ry =R[y_1,\Lambda ,L]$ 
for some $L\in \Ry $. 
Hence, 
$R[x,y]^{\phi }$ is isomorphic to $R[x,y]$ 
as an $R$-algebra. 
\end{thm}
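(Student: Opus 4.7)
The ``Hence'' part is immediate from Theorem~\ref{thm:Nagata relation}: if $R[\y]=R[y_1,\Lambda,L]$, then $R[\y]/(\Lambda)\cong R[y_1,L]$, a polynomial ring in two variables over $R$, which is isomorphic to $R[x,y]$. So the plan focuses on showing that $\Lambda$ is a coordinate of $R[y_1][y_0,y_2]$.

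The first step is to extract from the hypothesis an identity strong enough to ``lift'' condition F to the mod $b^p$ level. By Remark~\ref{rem:principality of I}, principality of $I$ is equivalent to condition F: there exist $\nu_0(y)\in R[y]$ and $\mu(y)\in R[y]$ with $y=\nu_0(\rho(y))+b\mu(y)$. Raising to the $p$-th power in characteristic $p$ (using $(u+v)^p=u^p+v^p$) and applying the identities $\nu_0(T)^p=\nu_0^{[p]}(T^p)$ and $\rho(y)^p=\rho^p(y^p)$ (where $^{[p]}$ denotes the polynomial with $p$-th power coefficients), then substituting $z=y^p$, yields the polynomial identity in $R[z]$
\[
z-\nu_0^{[p]}(\rho^p(z))=b^p\mu^{[p]}(z).
\]
Combining this with the Taylor expansion $\nu_0^{[p]}(\rho^p(y_0))-\nu_0^{[p]}(y_1^p)=(\rho^p(y_0)-y_1^p)\tilde N(y_0,y_1)$ for some $\tilde N\in R[y_0,y_1]$, and noting $\nu_0^{[p]}(y_1^p)=\nu_0(y_1)^p$, gives the key identity
\[
y_0-\nu_0(y_1)^p=(\rho^p(y_0)-y_1^p)\tilde N(y_0,y_1)+b^p\mu^{[p]}(y_0)\qquad\text{in }R[y_0,y_1].
\]

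The second step is to construct $L$ and verify the Jacobian criterion. I would seek $L$ of the form $L=N(y_0,y_1)+y_2 M(y_0,y_1)$; forcing the coefficient of $y_2$ in the Jacobian $b^p L_{y_0}-L_{y_2}G_{y_0}$ (where $G=\Lambda-b^py_2$) to vanish forces $M\in R[y_1]$. The remaining condition becomes $b^pN_{y_0}-M(y_1)G_{y_0}=c\in R^*$, which upon integration in $y_0$ becomes $MG+cy_0+D(y_1)\in b^pR[y_0,y_1]$ for some $D\in R[y_1]$. The linear coefficient in $y_0$ gives $M(t_1^p+b^{p-1}(\lambda\nu)_1)\equiv -c\pmod{b^p}$; writing $t_1=u+bw$ with $u\in R^*$, the characteristic-$p$ identity $t_1^p=u^p+b^pw^p$ shows $t_1^p+b^{p-1}(\lambda\nu)_1$ is a unit modulo $b^p$ in $R[y_1]$, so $M$ exists. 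One then reads off $N$ by integration and verifies directly that $y_0\in R[y_1,\Lambda,L]$ (from the linear-in-$y_0$ part of $\Lambda-b^pL$ having unit leading coefficient modulo the ``small'' corrections) and hence $y_2\in R[y_1,\Lambda,L]$.

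The main obstacle is controlling the higher coefficients: the conditions $M(t_i^p+b^{p-1}(\lambda\nu)_i)\equiv 0\pmod{b^p}$ for $i\geq 2$ with $p\nmid i$, and $Mb^{p-1}(\lambda\nu)_i\equiv 0\pmod{b^p}$ for $i\geq 2$ with $p\mid i$. Since $t_i\in\sqrt{bR}$ only gives nilpotence modulo $b$ (not divisibility by any fixed power of $b$), these are not automatic, and the fix is either to exploit the freedom in $\lambda$ (defined only up to multiples of $y_1^p-\rho^p(y_0)$) to absorb the obstructions, or to allow $L$ to contain higher powers of $y_2$, solving the resulting triangular system $b^p(M_k)_{y_0}=(k+1)M_{k+1}G_{y_0}$ inductively. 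A toy verification in the case $p=2$, $R=k[b]$, $\theta(y)=y+by^3$ (where $L=y_0^3+y_0y_1^2+y_2$ works and gives Jacobian $1$) suggests the general template, and the identity derived in step one is precisely the algebraic input that makes the necessary cancellations go through.
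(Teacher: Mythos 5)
Your reduction of the ``Hence'' part to Theorem~\ref{thm:Nagata relation} is correct, and your opening step (lifting condition F of Remark~\ref{rem:principality of I} to the level of $p$-th powers, e.g.\ producing $t_1^pr_1^p+b^pr_2^p=1$) is the right arithmetic input. But the core claim --- that $\Lambda$ is actually a coordinate --- is not established. You set up a Jacobian/integration scheme for an explicit $L$, and then you yourself observe that the conditions coming from the coefficients with $i\ge 2$ ``are not automatic,'' offering only two candidate fixes (absorbing obstructions into $\lambda$, or allowing higher powers of $y_2$ and solving a triangular system) without carrying either out. That is a genuine gap: $t_i\in\sqrt{bR}$ gives only nilpotence modulo $b$, no fixed power of $b$ divides $t_i$, so the obstructions you list do not simply vanish, and no inductive scheme is exhibited. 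Moreover, even if the Jacobian of $(y_1,\Lambda,L)$ were shown to be a unit, in characteristic $p$ this does not certify that these elements generate $\Ry$ (already $y\mapsto y+y^p$ has derivative $1$ without being an automorphism), so the step ``verifies directly that $y_0\in R[y_1,\Lambda,L]$'' is the entire content of the theorem and is only gestured at.

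The missing ingredient is the theorem of Russell and Sathaye on linear planes (residual coordinates), which the paper invokes: for $S:=R[y_1]$, an element $b^py_2+\sum_{i\ge 0}u_iy_0^i$ of $S[y_0,y_2]$ with $u_i\in S$ is a coordinate if and only if $\ol{u}_1\in(S/b^pS)^*$ and $\ol{u}_i\in\nil(S/b^pS)$ for $i\ge 2$. Granting this, the proof is immediate from condition E of Remark~\ref{rem:principality of I}: reading off the coefficients of $\Lambda$ from (\ref{eq:Nagata relation}) gives $u_i\in t_i^p+bS$ for $p\nmid i$ and $u_i\in bS$ for $i>0$ with $p\mid i$; then $t_1r_1+br_2=1$ yields $t_1^pr_1^p+b^pr_2^p=1$, so $\ol{u}_1$ is a unit, while $t_i\in\sqrt{bR}$ yields $t_i^p\in\sqrt{b^pR}$ and $bS\subset\sqrt{b^pS}$, so the remaining $\ol{u}_i$ are nilpotent. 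Your computations essentially supply these facts, but without citing (or reproving) the Russell--Sathaye criterion they do not add up to a proof that $\Lambda$ is a coordinate.
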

\begin{proof}
Set $S:=R[y_1]$. 
Write $\Lambda =b^py_2+\sum _{i\ge 0}u_iy_0^i$, 
where $u_i\in S$. 
Due to a well-known result of Russell~\cite{Russell} 
and Sathaye~\cite{Sathaye}, 
it suffices to verify that 
$\ol{u}_1\in (S/b^pS)^*$ 
and $\ol{u}_i\in \nil (S/b^pS)$ if $i\ge 2$. 
Here, $\ol{u}$ denotes the image of $u$ in $S/b^pS$ for $u\in S$.

Since $\rho ^p(y_0)=\sum _{p\nmid i}t_i^py_0^i$, 
we see from (\ref{eq:Nagata relation}) that 
$u_i\in bS$ if $i>0$ and $p\mid i$, 
and $u_i\in t_i^p+bS$ if $p\nmid i$. 
Hence, 
$\ol{u}_i$ is in $\nil (S/b^pS)$ if $i>0$ and $p\mid i$. 
If $p\nmid i$, 
then $\ol{u}_i$ is in $\ol{t}_i^p+\nil (S/b^pS)$. 
Since $I$ is principal by assumption, 
we know by Remark~\ref{rem:principality of I}~E that 
$t_1r_1+br_2=1$ for some $r_1,r_2\in R$, 
and $t_i\in \sqrt{bR}$ if $p\nmid i$ and $i\ge 2$. 
Since $t_1^pr_1^p+b^pr_2^p=1$, 
we have $\ol{t}_1^p\in (S/b^pS)^*$. 
Hence, 
we get 
$\ol{u}_1\in \ol{t}_1^p+\nil (S/b^pS)\subset (S/b^pS)^*$. 
When $t_i\in \sqrt{bR}$, 
we have $\ol{u}_i\in \ol{t}_i^p+\nil (S/b^pS)\subset \nil (S/b^pS)$. 
\end{proof}

To prove the non-polynomiality of $\Ry /(\Lambda )$, 
we use the following lemma. 

\begin{lem}\label{lem:non-sing}
Let $S$ be a domain, 
$\kappa $ an algebraic closure of $Q(S)$, 
and $h\in \Sy $, 
where $\y :=\{ y_0,\ldots ,y_r\} $ 
is a set of variables. 
If $\Sy /(h)\simeq S[y_1,\ldots ,y_r]$ 
and $h$ is irreducible in $\kappa [\y ]$, 
then we have 
$\kappa [\y ]/(h)\simeq \kappa [y_1,\ldots ,y_r]$. 
Hence, the hypersurface $h=0$ in $\A _{\kappa }^{r+1}$ 
is isomorphic to $\A _{\kappa }^r$, 
and thus smooth. 
Consequently, 
the system of equations $h=0$ and $\partial h/\partial y_i=0$ 
for $i=0,\ldots ,r$ has no solution in $\kappa ^{r+1}$. 
\end{lem}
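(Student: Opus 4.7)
The plan is to first promote the given $S$-algebra isomorphism to a $\kappa$-algebra isomorphism by base change, and then apply the Jacobian criterion for smoothness of a hypersurface.

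First, I observe that the sequence $\Sy \xrightarrow{h} \Sy \to \Sy/(h) \to 0$ is exact (this is just the definition of the quotient). Tensoring over $S$ with $\kappa$ is right exact, and $\kappa \otimes_S \Sy$ is canonically $\kappa[\y]$. Hence
\[
\kappa \otimes_S \bigl(\Sy/(h)\bigr) \;\simeq\; \kappa[\y]/(h)
\]
as $\kappa$-algebras, and similarly $\kappa \otimes_S S[y_1,\ldots,y_r] \simeq \kappa[y_1,\ldots,y_r]$. Applying $\kappa \otimes_S -$ to the hypothesized isomorphism $\Sy/(h) \simeq S[y_1,\ldots,y_r]$ therefore yields the first conclusion
\[
\kappa[\y]/(h) \;\simeq\; \kappa[y_1,\ldots,y_r]
\]
as $\kappa$-algebras.

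Geometrically this identifies the affine variety $V(h) \subset \A_\kappa^{r+1}$ with $\A_\kappa^r$; in particular every $\kappa$-point of $V(h)$ is a smooth point. For the final assertion I would invoke the Jacobian criterion: since $h$ is irreducible in $\kappa[\y]$, $V(h)$ is an irreducible hypersurface of dimension $r$, and a $\kappa$-point $a=(a_0,\ldots,a_r)$ of $V(h)$ is smooth if and only if the gradient row $\bigl(\partial h/\partial y_0(a),\ldots,\partial h/\partial y_r(a)\bigr)$ is nonzero. Smoothness of $V(h)$ at every point thus rules out any common zero of $h$ and all the partials $\partial h/\partial y_i$, which is exactly the claim.

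There is no serious obstacle: the only point requiring a moment of care is the compatibility of the ideal $(h)$ with the base change $S \to \kappa$, which is settled uniformly by the right exactness of tensoring. The remaining steps are standard translations between commutative algebra and the geometry of affine hypersurfaces.
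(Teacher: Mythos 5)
Your proof is correct, but it takes a genuinely different route from the paper's. You base-change the hypothesized isomorphism along $S\to\kappa$: right exactness of $\kappa\otimes_S-$ gives $\kappa\otimes_S(\Sy/(h))\simeq\kappa[\y]/(h)$ and $\kappa\otimes_S S[y_1,\ldots,y_r]\simeq\kappa[y_1,\ldots,y_r]$, and the conclusion follows at once. The paper instead extracts from the hypothesis only that the $S$-algebra $\Sy/(h)$ is generated by $r$ elements, lifts these generators to $\xi_1,\ldots,\xi_r\in\Sy$, observes that their images generate the $\kappa$-algebra $\kappa[\y]/(h)$, and then uses irreducibility of $h$ to see that $\kappa[\y]/(h)$ is a $\kappa$-domain of transcendence degree $r$, forcing the $r$ generators to be algebraically independent. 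Your argument is shorter and does not even need irreducibility for the first conclusion (only for the Jacobian-criterion step, where it guarantees $(h)$ is radical and $V(h)$ has pure dimension $r$); the paper's argument is more elementary (no tensor products) and uses a strictly weaker input, namely generation by $r$ elements rather than the full polynomial-ring structure. Both proofs tacitly read the hypothesis $\Sy/(h)\simeq S[y_1,\ldots,y_r]$ as an isomorphism of $S$-algebras, which is how the lemma is applied in Theorem~\ref{thm:Nagata relation2}, so this shared reading is not a gap. The closing appeal to the Jacobian criterion over the algebraically closed (hence perfect) field $\kappa$ is the same in both treatments.
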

\begin{proof}
By assumption, 
the $S$-algebra $\Sy /(h)$ is generated by $r$ elements. 
Hence, 
there exist $\xi _1,\ldots ,\xi _r\in \Sy $ 
such that $y_0,\ldots ,y_r\in S[\xi _1,\ldots ,\xi _r]+h\Sy $. 
Since 
$S[\xi _1,\ldots ,\xi _r]+h\Sy \subset 
\kappa [\xi _1,\ldots ,\xi _r]+h\kappa [\y ]$, 
we see that the $\kappa $-algebra 
$\kappa [\y ]/(h)$ is generated by the images 
$\ol{\xi }_1,\ldots ,\ol{\xi }_r$ 
of $\xi _1,\ldots ,\xi _r$. 
Since $h$ is irreducible in $\kappa [\y ]$ by assumption, 
$\kappa [\y ]/(h)$ is a $\kappa $-domain 
with $\trd _{\kappa }\kappa [\y ]/(h)=r$. 
Therefore, 
$\ol{\xi }_1,\ldots ,\ol{\xi }_r$ 
must be algebraically independent over $\kappa $. 
The last part is well known. 
\end{proof}

\begin{thm}\label{thm:Nagata relation2}
Assume that $I$ is not a principal ideal of $R[x,y]$. 

\nd{\rm (i)} 
$R[x,y]^{\phi }$ is not isomorphic to 
$R[x,y]$ as an $R$-algebra. 

\nd{\rm (ii)} 
If $R=k[z_1,\ldots ,z_n]$ 
is the polynomial ring in $n$ 
variables over a field $k$ with $\ch k>0$, 
where $n\ge 1$, 
then $R[x,y]^{\phi }$ is not isomorphic to 
$R[x,y]$ as a $k$-algebra. 
\end{thm}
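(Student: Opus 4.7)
The strategy is to combine Theorem~\ref{thm:Nagata relation}, which identifies $R[x,y]^{\phi}$ with $R[\y]/(\Lambda)$, with Lemma~\ref{lem:non-sing}: an assumed polynomial isomorphism forces the hypersurface $V(\Lambda)$ to be smooth after a suitable base change, and I would derive a contradiction by exhibiting a singular point. The key preliminary observation is that if $I$ is not principal, then some $t_{i_0}$ with $i_0 \geq 2$ and $p \nmid i_0$ must fail to lie in $\sqrt{bR}$. Indeed, by Remark~\ref{rem:principality of I}~E, the failure of principality means either $t_1 \notin (R/bR)^*$ or such a $t_{i_0}$ exists. The first possibility alone is impossible: if every $t_i$ with $i \geq 2$ and $p \nmid i$ lay in $\sqrt{bR}$ and some prime factor $b_1$ of $b$ divided $t_1$, then $b_1$ would divide every coefficient of $\rho'(y) = \sum_{p \nmid i} it_i y^{i-1}$, contradicting the identity $\gcd(b,\rho'(y)) = 1$ coming from $d = \gcd(a,\theta'(y))$.

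For (i), suppose for contradiction that $R[\y]/(\Lambda) \simeq R[x,y]$ as $R$-algebras. Pick a prime factor $b_1$ of $b$ with $b_1 \nmid t_{i_0}$; tensoring with $R/b_1R$ yields an isomorphism $(R/b_1R)[\y]/(\ol{\Lambda}) \simeq (R/b_1R)[x,y]$, where $\ol{\Lambda} = \ol{\rho^p}(y_0) - y_1^p$ because every $b^j$-term with $j \geq 1$ vanishes modulo $b_1$. I then apply Lemma~\ref{lem:non-sing} with $S = R/b_1R$ (a domain, since $R$ is a UFD and $b_1$ is prime) and $h = \ol{\Lambda}$. Irreducibility of $\ol{\Lambda}$ over the algebraic closure $\kappa$ of $Q(R/b_1R)$ holds because $\ol{\rho^p}(y_0)$ contains the monomial $\ol{t_{i_0}}^p y_0^{i_0}$ with $p \nmid i_0$, hence is not a $p$-th power in $\kappa[y_0]$. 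To contradict the lemma, I exhibit a singular point: $(\ol{\rho^p})'(y_0)$ has the nonzero term $i_0\ol{t_{i_0}}^p y_0^{i_0-1}$ of positive degree, so a root $\alpha_0 \in \kappa$ exists; setting $\alpha_1 = \ol{\rho^p}(\alpha_0)^{1/p}$ and $\alpha_2 = 0$, all partials of $\ol{\Lambda}$ vanish at $(\alpha_0,\alpha_1,\alpha_2)$ (those in $y_1$ and $y_2$ trivially in characteristic $p$, and that in $y_0$ by the choice of $\alpha_0$).

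For (ii), I would apply Lemma~\ref{lem:non-sing} directly to $\Lambda \in k[z_1,\ldots,z_n,\y]$ with $S = k$ and $\kappa = \ol{k}$. Irreducibility of $\Lambda$ in $\ol{k}[z_1,\ldots,z_n,\y]$ holds since, as a polynomial of $y_2$-degree $1$ with leading coefficient $b^p$, its content is $1$: modulo any prime factor $b_j$ of $b$, the $y_2$-constant term reduces to $\ol{\rho^p}(y_0) - y_1^p \ne 0$. To find a singular point $(\beta,\alpha_0,\alpha_1,\alpha_2) \in \ol{k}^{n+3}$, pick $\beta \in V(b) \sm V(t_{i_0})$ (nonempty since $t_{i_0} \notin \sqrt{bR}$), let $\alpha_0$ be a root of the polynomial $(\rho^p)'(y_0)|_\beta$ (of degree $\geq i_0-1$), and set $\alpha_1 = (\rho^p(\alpha_0)|_\beta)^{1/p}$ and $\alpha_2 = 0$. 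A direct computation shows $\partial\Lambda/\partial z_j = (p-1)b^{p-2}(\partial b/\partial z_j)\lambda\nu + b^{p-1}\partial(\lambda\nu)/\partial z_j$, since the $b^p y_2$ and $\rho^p(y_0)$ terms contribute $0$ in characteristic $p$. For $p \geq 3$ both surviving contributions are divisible by $b^{p-2}$, which vanishes at $b(\beta)=0$, so all partials vanish automatically and we contradict the smoothness. The main obstacle is the $p = 2$ case, where $\partial\Lambda/\partial z_j$ retains the term $(\partial b/\partial z_j)(\beta)\lambda(\alpha_0,\alpha_1)\nu(\alpha_0,\alpha_1)$; I would resolve this either by restricting $\beta$ further to the singular locus of $V(b)$, or by arranging $\alpha_0,\alpha_1$ so that $\lambda\nu$ vanishes, using the explicit construction of $\lambda$ and $\nu$ from Section~\ref{sect:Nagata invariant ring1}.
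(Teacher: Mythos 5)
Your reduction to the case ``some $t_{i_0}$ with $i_0\ge 2$, $p\nmid i_0$, lies outside $\sqrt{bR}$'' contains a genuine gap, and the whole argument is built on it. By Remark~\ref{rem:principality of I}~E, non-principality of $I$ means \emph{either} $1\notin(t_1,b)$ \emph{or} such a $t_{i_0}$ exists. You try to rule out the first alternative by arguing that a prime factor $b_1$ of $b$ dividing $t_1$ would contradict $\gcd(b,\rho'(y))=1$; but $1\notin(t_1,b)$ does \emph{not} imply that $t_1$ and $b$ share a prime factor in a UFD of dimension $\ge 2$. Concretely, take $R=k[z_1,z_2]$, $p=3$, $a=z_2$, $\theta(y)=z_1y+z_2y^2$: then $d=1$, $b=z_2$, $t_1=z_1$, $t_2=z_2$, so $\gcd(t_1,b)=1$ and $t_2\in\sqrt{bR}$, yet $I=(z_1,z_2)$ is not principal. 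In this situation no $t_{i_0}$ as you require exists, and your strategy of reducing modulo a \emph{prime factor} of $b$ cannot succeed: modulo $z_2$ one gets $\ol{\Lambda}=\ol{z_1}^{\,p}y_0-y_1^p$, whose zero locus is a smooth graph, so there is no singular point to exhibit. The paper's proof avoids this by reducing modulo an arbitrary prime $\fp\in\Spec R$ — of height possibly greater than one — containing $(t_1,b)$ in the first alternative (and containing $b$ but not $t_i$ in the second); modulo such a $\fp$ the derivative $\ol{\rho}'(y_0)$ has vanishing constant term, hence is not a nonzero constant and has a root, and the degenerate sub-case $\ol{\rho^p}(y_0)=0$ is excluded separately because $-y_1^p$ would make the quotient non-reduced. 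Your argument for (ii) inherits the same defect, since it again needs a point of $V(b)\sm V(t_{i_0})$.

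A second, smaller issue: in (ii) you leave the $p=2$ term $(\partial b/\partial z_j)\,\lambda\nu$ unresolved, offering two possible fixes. The second one is the right one and works uniformly in $p$: since $\lambda(y^p,\rho(y))=\rho'(y)^p y$ by construction, choosing the singular point as $(y_0,y_1)=(\alpha^p,\rho(\alpha,\gamma))$ with $\rho_{y_0}(\alpha,\gamma)=0$ forces $\lambda(\alpha^p,\rho(\alpha,\gamma),\gamma)=0$, so the troublesome term vanishes regardless of $p$; this is exactly what the paper does. But as written, this step is only a sketch, and combined with the false preliminary observation the proposal does not constitute a proof.
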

\begin{proof}
(i) By Remark~\ref{rem:principality of I} E, 
we have 
$1\not\in (t_1,b)$, 
or $t_i\not\in \sqrt{bR}$ for some $i\ge 2$ with $p\nmid i$. 
There exists $\fp \in \Spec R$ such that $t_1,b\in \fp $ in the former case, 
and $t_{i}\not\in \fp $ and $b\in \fp $ in the latter case. 
In both cases, 
$\ol{\rho }'(y_0)=\sum _{p\nmid i}i\ol{t}_iy_0^{i-1}$ 
is not a nonzero constant, 
and $\ol{\Lambda }=\ol{\rho ^p}(y_0)-y_1^p$. 
Here, 
$\ol{h}$ denotes the image of $h\in R[\y ]$ in 
$(R/\fp )[\y ]$. 
Let $\kappa $ be an algebraic closure of $Q(R/\fp )$. 
Then, there exists $\alpha \in \kappa $ with $\ol{\rho }'(\alpha )=0$.

Now, 
suppose that 
$R[x,y]^{\phi }\simeq \Ry /(\Lambda )=:A$ is isomorphic to $R[x,y]$ 
as an $R$-algebra. 
Then, we have 
$A/\fp A\simeq R[x,y]/\fp R[x,y]\simeq (R/\fp )[x,y]$. 
Since 
$A/\fp A\simeq \Ry /(\Lambda \Ry +\fp \Ry )\simeq 
(R/\fp )[\y ]/(\ol{\Lambda })$, 
we get 
$(R/\fp )[\y ]/(\ol{\Lambda })\simeq (R/\fp )[x,y]$. 
This implies $\ol{\Lambda }\ne -y_1^p$, 
that is, 
$\ol{\rho ^p}(y_0)\ne 0$. 
Then, 
$\ol{\Lambda }=\ol{\rho ^p}(y_0)-y_1^p$ is irreducible in $\kappa [\y ]$, 
since the degree of $\ol{\rho ^p}(y_0)=\sum _{p\nmid i}\ol{t_i^p}y_0^i$ 
is coprime to $p$. 
Thus, 
the assumption of Lemma~\ref{lem:non-sing} holds 
for $S=R/\fp $ and $h=\ol{\Lambda }$. 
However, 
$(y_0,y_1,y_2)=(\alpha ^p,\ol{\rho }(\alpha ),0)$ is a solution of 
$\ol{\Lambda }=\partial \ol{\Lambda }/\partial y_i=0$ 
for $i=0,1,2$, 
since $\ol{\rho ^p}(\alpha ^p)=\ol{\rho }(\alpha )^p$ 
and $(\ol{\rho ^p})'(\alpha ^p)=\ol{\rho }'(\alpha )^p=0$. 
This is a contradiction.

(ii) 
Let $\kappa $ be an algebraic closure of $k$, 
and $\z :=\{ z_1,\ldots ,z_n\} $. 
Then, 
$\Lambda $ is irreducible in $\kappa [\y ,\z ]$, 
since $\Lambda $ is a linear, primitive polynomial in $y_2$ 
over $\kappa [y_0,y_1,\z ]$. 
Now, 
suppose that 
$R[x,y]^{\phi }\simeq k[\y ,\z ]/(\Lambda )$ 
is isomorphic to $R[x,y]=k[x,y,\z ]$ 
as a $k$-algebra. 
Then, 
the assumption of Lemma~\ref{lem:non-sing} 
holds for $S=k$ and $h=\Lambda $.

Since $R=k[\z ]$, 
we write $\lambda (y_0,y_1)=\lambda (y_0,y_1,\z )$ 
and $\rho (y_0)=\rho (y_0,\z )$. 
As in (i), 
we have $1\not\in (t_1,b)$, 
or $t_i\not\in \sqrt{bR}$ for some $i\ge 2$ with $p\nmid i$. 
By Hilbert's Nullstellensatz, 
there exists $\gamma \in \kappa ^n$ such that 
$t_1(\gamma )=b(\gamma )=0$ in the former case, 
and $t_i(\gamma )\ne 0$ and $b(\gamma )=0$ in the latter case. 
In both cases, 
$\rho _{y_0}(y_0,\gamma )$ 
is not a nonzero constant, 
and $b(\gamma )=0$. 
Choose $\alpha \in \kappa $ with $\rho _{y_0}(\alpha ,\gamma )=0$. 
Then, 
we have 
$\lambda (\alpha ^p,\rho (\alpha ,\gamma ),\gamma )
=\rho _{y_0}(\alpha ,\gamma )^p\alpha =0$, 
since $\lambda (y^p,\rho (y))=\rho '(y)^py$ 
by the choice of $\lambda $ 
(cf.~\S \ref{sect:Nagata invariant ring1}). 
Noting this and 
$\partial b^p/\partial z_i=\partial \rho ^p(y_0)/\partial z_i=0$ for all $i$, 
we can check that 
$(y_0,y_1,y_2,\z )=(\alpha ^p,\rho (\alpha ,\gamma ),0,\gamma )$ 
is a solution of 
$\Lambda =\partial \Lambda /\partial y_i
=\partial \Lambda /\partial z_j=0$ 
for $i=0,1,2$ and $j=1,\ldots ,n$. 
This contradicts Lemma~\ref{lem:non-sing}. 
\end{proof}

\section{Question and Conjecture}\label{sect:remark}
\setcounter{equation}{0}

\nd (1) 
The {\it Stable Tameness Conjecture} asserts that 
every $\phi \in \Aut _k\kx $ is {\it stably tame}, 
i.e., 
there exists $l>0$ such that $\phi _l\in \T _{n+l}(k)$, 
where $\phi _l$ is the extension of $\phi $ 
defined by $\phi _l(x_i)=x_i$ for all $i>n$ 
(cf.~\cite[Conjecture 6.1.8]{Essen}). 
When $n=3$, 
every element of 
$T:=\langle \T _3(k)\cup \Aut _{k[x_3]}\kx \rangle $ is 
stably tame 
due to Berson-van den Essen-Wright~\cite{BEW}. 
However, 
we do not know the answer to the following question.

\begin{q}\label{q:STC}\rm
Is $\phi $ in (\ref{eq:simple example}) 
(or more generally $\ep _h$ in \S \ref{sect:rank3 exp}) 
stably tame?
\end{q}

\nd (2) 
In the case $p=0$, 
the author \cite{wild3} studied in detail 
when $\phi \in \Ex _3(k)$ 
belongs to $\T _3(k)$ or not, 
using the Shestakov-Umirbaev theory~\cite{SU} 
and its generalization~\cite{tame3}. 
There, 
he arrived at the following conjecture for $p=0$ 
(cf.~\cite[Conjecture 17.3]{Sugaku}). 
Here, 
we say that $\tau \in  \Aut _k\kx $ is {\it triangular} 
if $\tau (x_i)\in k[x_1,\ldots ,x_i]$ for each $i$.

\begin{conj}\label{conj:tame3}\rm 
For every $\phi \in \Ex _3(k)\cap \T _3(k)$, 
there exists $\sigma \in \T _3(k)$ 
such that 
$\sigma \circ \phi \circ \sigma ^{-1}$ is triangular. 
\end{conj}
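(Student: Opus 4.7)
The plan is to reduce the conjecture to a statement purely about locally nilpotent derivations (LNDs). Since $p=0$ and $\phi \in \Ex_3(k)$, write $\phi = \ep_F = \exp(FD)$, where $D$ is the LND of $\kx$ corresponding to the $\Ga$-action $\ep$ and $F \in \ker D = \kx^{\ep}$. Because $\ker D \subset \kx^{\phi}$, any tame conjugation that triangularizes $D$ as an LND also triangularizes $\phi$ as an automorphism, since a triangular LND yields a triangular exponential. So the first reduction is: if $D$ is conjugate via $\sigma \in \T_3(k)$ to a triangular LND, we are done. The genuinely new content of the conjecture is therefore to handle those exponential $\phi$ for which the underlying $D$ is \emph{not} triangularizable, but $\phi$ happens to lie in $\T_3(k)$ anyway.

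I would then split on $\rank D \in \{1,2,3\}$. For $\rank D = 1$, Rentschler-type arguments give coordinates $y_1, y_2 \in \ker D$ extendable to a tame coordinate system; the conjugation of $\phi$ by this tame change of variables is elementary (hence triangular), and this handles the case unconditionally. For $\rank D = 2$, I would invoke the Daigle--Freudenburg structure theorem for rank-two LNDs on $\A_k^3$: after a tame coordinate change, $D$ restricts to an LND on $k[x_1,x_2,x_3]$ that is triangular modulo a single relation, and one can verify directly that $\exp(FD)$ becomes triangular.

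The real obstacle is $\rank D = 3$, exemplified by Nagata's $D$. For such $D$, generic $\exp(tD)$ is wild (by Shestakov--Umirbaev), so the hypothesis $\phi = \exp(FD) \in \T_3(k)$ is a very restrictive condition on $F \in \ker D$. The strategy here is to apply the Shestakov--Umirbaev reduction theory (and its refinement in \cite{tame3}) to any tame decomposition $\phi = \tau_1 \circ \cdots \circ \tau_N$: each reduction step forces a Poisson-bracket-style degree inequality involving $F$ and the generators of $\ker D$. I would try to show that these inequalities force $F$ to belong to a proper subring of $\ker D$ for which the pair $(D,F)$ admits a tame change of coordinates making $\exp(FD)$ triangular. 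Concretely, I would expect $F$ to be forced into $k[h]$ for some $h \in \ker D$ with $h$ part of a tame coordinate system of $\kx$, which then allows us to triangularize.

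The hard part will be this rank-three analysis: translating the Shestakov--Umirbaev obstructions from statements about the tame factorization of $\phi$ into structural statements about $F$ inside $\ker D$. This is analogous to, but harder than, the arguments in \cite{wild3}, because here we must conclude not merely whether $\phi \in \T_3(k)$, but produce an explicit tame conjugator. A secondary obstacle is the possibility that some rank-three $D$ admit exceptional $F$'s for which $\exp(FD)$ is tame for nontrivial reasons (e.g., through a cancellation in a long tame word); ruling out such pathological $F$'s would be the crux, and I would expect it to require a careful induction on the degree of $F$ combined with the Shestakov--Umirbaev parachute/reduction criterion.
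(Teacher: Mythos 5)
This statement is not proved in the paper: it appears only as Conjecture~\ref{conj:tame3}, an open problem (originating from \cite{wild3} and \cite[Conjecture 17.3]{Sugaku}), and the paper offers no argument for it. So there is no proof of the paper's to compare against; the only question is whether your proposal actually establishes the statement, and it does not. What you have written is a reduction strategy whose decisive step is explicitly left open. The entire content of the conjecture sits in the rank-three case (and, as noted below, already in part of the rank-two case), and there you only say that you ``would try to show'' that Shestakov--Umirbaev degree inequalities force $F$ into a subring of $\ker D$ admitting a tame triangularizing change of coordinates, and that ruling out pathological $F$ ``would be the crux.'' Naming the crux is not resolving it: no inequality is derived, no induction is actually set up, and no mechanism is given for extracting the tame conjugator $\sigma$ from a tame factorization $\phi=\tau_1\circ\cdots\circ\tau_N$ --- which is precisely the difficulty that has kept this a conjecture.

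Two further gaps. First, your rank-two step is not a complete argument either: in characteristic zero there exist rank-two locally nilpotent derivations of $k[x_1,x_2,x_3]$ that are \emph{not} triangularizable (e.g.\ Bass's example attached to $x_1x_3-x_2^2$), so ``triangular modulo a single relation'' does not by itself produce a tame $\sigma$ making $\exp(FD)$ triangular; for such $\phi$ the conjecture is already a genuine open question, not a corollary of a structure theorem. Second, the conjecture as stated in this paper carries no restriction to $p=0$ (indeed the paper immediately goes on to pose the analogue for $p>0$ in Conjecture~\ref{conj:ch triangular}), whereas your opening line assumes $p=0$ and the $\exp(FD)$ formalism, which does not transfer to positive characteristic without the $\Ga$-action formalism of \S\ref{sect:Ga-action}. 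In short: the proposal is a plausible plan of attack whose hard parts are deferred, not a proof, and the statement remains open.
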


It seems reasonable to expect that Conjecture~\ref{conj:tame3} 
also holds for $p>0$. 
In fact, we have the following conjecture 
(see also Question~\ref{q:C=E?} below).

\begin{conj}\label{conj:ch triangular}\rm 
Assume that $p>0$. Then, 
for every $\phi \in \Ch _3(k)\cap \T _3(k)$, 
there exists $\sigma \in \T _3(k)$ such that 
$\sigma \circ \phi \circ \sigma ^{-1}$ is triangular. 
\end{conj}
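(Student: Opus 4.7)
The plan is to adapt the characteristic-zero argument of \cite{wild3}, which leverages the Shestakov-Umirbaev theory, to positive characteristic. The approach has two stages: first, show that a tame $\phi \in \Ch_3(k)\cap\T_3(k)$ can be conjugated by a tame $\sigma$ to fix the first variable $x_1$; second, triangularize the resulting automorphism of $k[x_1][x_2,x_3]$ using the two-variable theory developed in Sections~\ref{sect:prelim} and \ref{sect:Nagata}.

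For the first stage, I would prove that $\gamma(\kx^\phi)\ge 1$ for every $\phi \in \Ch_3(k) \cap \T_3(k)$; in particular, the counterexamples of Section~\ref{sect:rank3 family}, although they lie in $\Ex_3(k)$, must fall outside $\T_3(k)$. The natural tool is a Shestakov-Umirbaev-type reduction theorem in characteristic $p$, analogous to the generalization of \cite{tame3}: in a minimal tame factorization $\phi = \sigma_1\circ\cdots\circ\sigma_r$, one of the factors can be absorbed into a tame coordinate change that extracts a $\phi$-invariant coordinate. Iterating, while checking compatibility with the order-$p$ constraint (an essential subtlety, since the conjugation action must preserve characteristic-order and not merely tameness), produces $\sigma \in \T_3(k)$ with $(\sigma\circ\phi\circ\sigma^{-1})(x_1) = x_1$.

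For the second stage, view the conjugated $\phi$ as an element of $\Aut_{k[x_1]}k[x_1][x_2,x_3]$ of order dividing $p$. Extending scalars to $k(x_1)$ and applying Theorem~\ref{thm:Osaka}, we obtain $Y_2, Y_3 \in k(x_1)[x_2,x_3]$ and $g \in k(x_1)[Y_2]$ with $k(x_1)[Y_2,Y_3] = k(x_1)[x_2,x_3]$, $\phi(Y_2)=Y_2$ and $\phi(Y_3) = Y_3 + g$. The descent to $k[x_1]$ is organized around the plinth ideal: by Theorem~\ref{thm:Nagata Main} with $R = k[x_1]$, principality of $\pl(\phi)$ is equivalent to $k[x_1][x_2,x_3]^\phi \simeq k[x_1][x_2,x_3]$, and tameness of $\phi$ over $k[x_1]$, combined with Stable Tameness phenomena (\cite{BEW}), should force principality of $\pl(\phi)$. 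Granting this, one recovers polynomial $Y_2, Y_3$ realizing the triangularization, with the change of coordinates tame by the Jung-van der Kulk structure of $\Aut_{k(x_1)}k(x_1)[x_2,x_3]$ and an explicit cancellation argument over $k[x_1]$.

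The main obstacle lies in the reduction theorem of the first stage. The modular Artin-Schreier phenomena underpinning Section~\ref{sect:rank3 family} introduce genuinely new cases absent in characteristic zero, and the interaction between tameness and the characteristic-order constraint must be established by a fresh combinatorial analysis of how an order-$p$ relation on a tame word restricts the leading monomials of its factors. The descent in the second stage is delicate but should be largely controlled by the invariant-ring and plinth-ideal structure derived in Section~\ref{sect:Nagata}; the heart of the conjecture is the assertion that tame characteristic-order automorphisms avoid the rank-three $\Ga$-action pathology exhibited in Theorem~\ref{thm:main}.
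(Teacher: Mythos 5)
This statement is presented in the paper as an open conjecture (Conjecture~\ref{conj:ch triangular}); the paper offers no proof, only heuristic support and the observation that it would imply Conjecture~\ref{conj:variable}. Your proposal is therefore not being measured against an existing argument, and on its own terms it is a research program rather than a proof: both of its stages rest on assertions that are themselves open.

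The first stage is the decisive gap. You invoke ``a Shestakov--Umirbaev-type reduction theorem in characteristic $p$, analogous to the generalization of \cite{tame3}.'' No such theorem exists: the Shestakov--Umirbaev machinery depends on degree estimates and Poisson-bracket computations that are specific to characteristic zero, and the paper itself records that the Tame Generators Problem for $n=3$, $p>0$ remains open precisely because this theory has not been transported to positive characteristic. Your stage-one conclusion, that $\gamma(\kx^{\phi})\ge 1$ for all $\phi\in \Ch_3(k)\cap\T_3(k)$, is exactly Conjecture~\ref{conj:variable}; assuming it (or the machinery needed to prove it) is circular with respect to the heart of the problem. The second stage also has an unjustified step: Theorem~\ref{thm:Nagata Main} applies only to the specific Nagata-type automorphisms constructed in \S\ref{sect:Nagata construction}, not to an arbitrary order-$p$ element of $\Aut_{k[x_1]}k[x_1][x_2,x_3]$, so you cannot cite it to equate principality of $\pl(\phi)$ with polynomiality of the invariant ring in your setting; and the claim that tameness ``should force'' principality of the plinth ideal is asserted without argument. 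Even granting $\kx^{\phi}\simeq\kx$, descending the invariant coordinate $Y_2$ from $k(x_1)[x_2,x_3]$ to $k[x_1][x_2,x_3]$ and arranging the change of variables to be tame over $k$ (not merely over $k(x_1)$) requires a separate argument you have not supplied. Your closing paragraph correctly identifies these obstacles, which is to say the proposal identifies the right difficulties but does not overcome them.
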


We claim that Conjecture~\ref{conj:ch triangular} 
implies the following conjecture.

\begin{conj}\label{conj:variable}\rm 
Assume that $p>0$. Then, 
for every $\phi \in \Ch _3(k)\cap \T _3(k)$, 
there exists $\sigma \in \T _3(k)$ such that 
$\sigma (x_1)\in \kx ^{\phi }$. 
Hence, we have $\gamma (\kx ^{\phi })\ge 1$. 
\end{conj}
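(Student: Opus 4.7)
The plan is to use Conjecture~\ref{conj:ch triangular} to reduce to the case when $\phi $ itself is triangular, then to solve an explicit coboundary equation and exhibit a $\psi $-invariant of a very specific form. Let $\phi \in \Ch _3(k)\cap \T _3(k)$, and use Conjecture~\ref{conj:ch triangular} to obtain $\sigma '\in \T _3(k)$ such that $\psi :=\sigma '\circ \phi \circ {\sigma '}^{-1}$ is triangular. Since $\kx ^{\phi }={\sigma '}^{-1}(\kx ^{\psi })$, it suffices to produce $\tau \in \T _3(k)$ with $\tau (x_1)\in \kx ^{\psi }$; then $\sigma :={\sigma '}^{-1}\circ \tau \in \T _3(k)$ will satisfy $\sigma (x_1)\in \kx ^{\phi }$.

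Write $\psi (x_i)=\alpha _ix_i+g_i(x_1,\ldots ,x_{i-1})$. The condition $\psi ^p=\id $ forces $\alpha _i^p=1$, hence $\alpha _i=1$ since $\ch k=p$; thus $\psi (x_1)=x_1+g_1$ with $g_1\in k$ and $\psi (x_2)=x_2+g_2(x_1)$ with $g_2\in k[x_1]$. If $g_1=0$, then $x_1\in \kx ^{\psi }$ and $\tau =\id $ works. Assume $g_1\ne 0$, and seek a $\psi $-invariant of the form $y:=x_2+h(x_1)$ with $h\in k[x_1]$: the equation $\psi (y)=y$ is equivalent to the coboundary relation
\begin{equation*}
h(x_1)-h(x_1+g_1)=g_2(x_1).
\end{equation*}
Once $h$ is known to exist, the map $\tau $ defined by $\tau (x_1)=y$, $\tau (x_2)=x_1$, $\tau (x_3)=x_3$ is tame, being the composition of the affine transposition $x_1\leftrightarrow x_2$ with the map $x_1\mapsto x_1+h(x_2)$ (which is elementary up to a permutation of variables), and $\tau (x_1)=y\in \kx ^{\psi }$ as required.

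For the solvability of the coboundary equation, set $s:=g_1^{-1}x_1$, so that $\psi (s)=s+1$; by Remark~\ref{rem:pl for ch order autom}~(i) we have $k[x_1]=\bigoplus _{i=0}^{p-1}k[x_1]^{\psi }s^i$. In this basis $\id -\psi $ is strictly upper-triangular with diagonal entries $-i$ for $0\le i\le p-1$, so its image equals $\bigoplus _{i=0}^{p-2}k[x_1]^{\psi }s^i$, which a direct expansion identifies with the kernel of the trace $\mathrm{Tr}:=\sum _{i=0}^{p-1}\psi ^i$. The identity $\psi ^p(x_2)=x_2$ unfolds to $\mathrm{Tr}(g_2)=\sum _{i=0}^{p-1}g_2(x_1+ig_1)=0$, so $g_2$ lies in the image of $\id -\psi $ and $h$ exists.

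The main obstacle is Conjecture~\ref{conj:ch triangular} itself, which is presently open for $p>0$; granting it, the remainder of the argument is essentially the modular-representation fact that the cyclic group $\langle \psi \rangle $ acts freely on $k[x_1]$, a fact already encapsulated in Remark~\ref{rem:pl for ch order autom}~(i).
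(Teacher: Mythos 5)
Your argument is correct as a derivation of Conjecture~\ref{conj:variable} from Conjecture~\ref{conj:ch triangular}, which is exactly the logical status the paper assigns to this statement: it is itself a conjecture, and the paper only claims (and briefly justifies) the implication. Where you diverge is in the final step. After conjugating to a triangular $\psi$ of order $p$, the paper merely observes that $\psi$ restricts to an element of $\Ch _2(k)$ on $k[x_1,x_2]$ and invokes Theorem~\ref{thm:Osaka}, which via the Jung--van der Kulk amalgamated product structure of $\Aut _kk[x_1,x_2]$ produces a coordinate $X_1$ of $k[x_1,x_2]$ fixed by $\psi$; the coordinate change $x_i\mapsto X_i$ is then automatically tame and extends tamely to $\kx $. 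You instead solve the coboundary equation $h(x_1)-h(x_1+g_1)=g_2(x_1)$ by hand, using the decomposition $k[x_1]=\bigoplus _{i=0}^{p-1}k[x_1]^{\psi }s^i$ of Remark~\ref{rem:pl for ch order autom}~(i) and the identity $(\id -\psi )(k[x_1])=\ker \bigl(\sum _{i=0}^{p-1}\psi ^i\bigr)$, together with the relation $\sum _{i=0}^{p-1}g_2(x_1+ig_1)=0$ forced by $\psi ^p=\id $. This is an additive Hilbert~90 argument that bypasses Theorem~\ref{thm:Osaka} (and hence Jung--van der Kulk) entirely and yields the invariant coordinate $x_2+h(x_1)$ explicitly; the paper's route is shorter but less constructive. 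Two cosmetic points: ``strictly upper-triangular with diagonal entries $-i$'' should say that the entries on the first superdiagonal are $-i$ (a strictly triangular matrix has zero diagonal); and the passage from the paper's definition of triangular, $\psi (x_i)\in k[x_1,\ldots ,x_i]$, to the normal form $\psi (x_i)=\alpha _ix_i+g_i(x_1,\ldots ,x_{i-1})$ with $\alpha _i\in k^*$ is standard for automorphisms but deserves a sentence, since in characteristic $p$ it does not follow from the Jacobian condition alone.
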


In fact, 
if $\tau \in  \Ch _3(k)$ is triangular, 
then $\tau $ restricts to an element of $\Ch _2(k)$, 
to which we can apply Theorem~\ref{thm:Osaka}.

\nd (3) 
We note that the Laurent polynomial ring 
$B=k[x_1^{\pm 1},\ldots ,x_p^{\pm 1}]$ 
admits no non-identity 
exponential automorphism. 
In fact, 
every $\Ga $-action on $B$ fixes 
$x_i$ and $x_i^{-1}$ for all $i$, 
since $x_ix_i^{-1}=1\in B^{\Ga }$, 
and $B^{\Ga }$ is factorially closed in $B$ 
(cf.~Remark~\ref{rem:Miyanishi}). 
Clearly, 
the automorphism of $B$ defined by 
$x_1\mapsto x_2\mapsto \cdots \mapsto x_p\mapsto x_1$ 
is of order $p$. 
Hence, 
$B$ admits a non-exponential automorphism of order $p$. 
However, 
there exists no such automorphism of $k[x_1,x_2]$ 
because of Theorem~\ref{thm:Osaka}.

\begin{q}\label{q:C=E?}\rm
Assume that $p>0$. 
Does $\Ch _n(k)=\Ex _n(k)$ hold for $n\ge 3$? 
\end{q}

\nd (4) 
Theorems~\ref{thm:Nagata Main}, 
\ref{thm:plinth rank 3}, \ref{thm:rank3 invariant ring} 
and \ref{thm:rank3 invariant ring2} 
support the following conjecture.

\begin{conj}\label{q:pl k[x]}\rm
Assume that $p>0$ and $n=3$. 
Then, 
for $\phi \in \Ex _3(k)$, 
we have 
$\kx ^{\phi }\simeq \kx $ if and only if $\pl (\phi )$ 
is a principal ideal of $\kx ^{\phi }$. 
\end{conj}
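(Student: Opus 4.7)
The plan is a case analysis on the rank $r := 3 - \gamma (\kx ^{\ep })$ of some $\Ga $-action $\ep $ for which $\phi = \ep _h$; both sides of the conjecture are invariant under conjugation by $\Aut _k\kx $, so I may freely change coordinates. Note that $r = 0$ forces $\ep $ trivial and $\phi = \id $, handling that case immediately.

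When $r = 1$, conjugation places $k[x_1,x_2] \subset \kx ^{\ep }$; since $k[x_1,x_2]$ is algebraically closed in $\kx $ and $\trd _k\kx ^{\ep } = 2$ by Remark~\ref{rem:Miyanishi}(iii), one actually has $\kx ^{\ep } = k[x_1,x_2]$, and in particular $h \in k[x_1,x_2]$. The coaction axiom (A2) then forces $\ep (x_3) = x_3 + \xi (T)$ for an additive polynomial $\xi (T) = \sum _{e \ge 0} c_e T^{p^e} \in k[x_1,x_2][T]$: for $j,k \ge 1$ the coefficient of $T^jU^k$ on the two sides of (A2) is $0$ versus $\binom{j+k}{j}a_{j+k}$, and Kummer's theorem forces $a_i = 0$ unless $i$ is a power of $p$. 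Hence $\phi (x_3) = x_3 + \xi (h)$ is elementary, and Lemma~\ref{lem:1var} yields $\kx ^{\phi } = k[x_1,x_2,x_3^p - \xi (h)^{p-1}x_3] \simeq \kx $ together with $\pl (\phi ) = \xi (h)\kx ^{\phi }$ principal (or $\phi = \id $ if $\xi (h) = 0$). When $r = 2$, conjugate so that $\ep $ fixes $S := k[x_1]$; then $\ep $ becomes a rank-one $\Ga $-action on $S[x_2,x_3]$ over $S$, whose base change to $K := Q(S)$ admits an invariant variable by Miyanishi's two-dimensional classification. Tracking this invariant variable back to $S[x_2,x_3]$ and clearing denominators should present $\ep $ in the Nagata shape of \S \ref{sect:Nagata construction} for appropriate $a \in S$, $\theta (y) \in yS[y]$ and $F \in S[ax + \theta (y)]$; Theorem~\ref{thm:Nagata Main} then delivers the equivalence. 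The delicate part of this reduction is the clearing-of-denominators step.

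The main obstacle is the rank $r = 3$ case, which is settled for the explicit family of Section~\ref{sect:rank3 family} by combining Theorem~\ref{thm:plinth rank 3} with Theorems~\ref{thm:rank3 invariant ring} and~\ref{thm:rank3 invariant ring2} ($p \mid t$ yields both principal plinth and polynomial invariant ring, while the hypotheses of Theorem~\ref{thm:rank3 invariant ring2} yield neither), but is open in general. The natural strategy would be to prove a \emph{normal form theorem}: every rank-three $\Ga $-action on $\A _k^3$ in positive characteristic should admit a distinguished pair $f,g \in \kx ^{\ep }$ with $\gcd (f,g) = 1$ and an auxiliary element $r \in \kx $ playing the role of (\ref{eq:rank3 g}), so that the structural analysis of Sections~\ref{sect:rank3 family} and~\ref{sect:invariant ring} transfers. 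Producing such a classification — an analogue, much more delicate, of the Rentschler--Miyanishi normal form in dimension two — appears to be the principal difficulty in settling the conjecture.
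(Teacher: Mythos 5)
The statement you are proving is labelled a \emph{conjecture} in the paper: the author offers no proof, only the supporting evidence of Theorems~\ref{thm:Nagata Main}, \ref{thm:plinth rank 3}, \ref{thm:rank3 invariant ring} and \ref{thm:rank3 invariant ring2}. So there is no proof in the paper to compare against, and your proposal should be judged as a proof strategy. As such it is sensible in outline (stratify by the rank of a $\Ga$-action inducing $\phi$), and your rank~$0$ and rank~$1$ cases do go through: in rank one you correctly land on an elementary automorphism, though your claim that the $T^jU^k$-coefficient of the left side of (A2) vanishes for $j,k\ge 1$ presupposes that the higher coefficients $a_i$ of $\ep(x_3)$ already lie in $\kx^{\ep}$; the clean way to get additivity of $\xi$ is the degree argument ($\deg_{x_3}\ep(x_3)=1$, then the cocycle condition forces the formal-group-law shape $x_3+\xi(T)$ with $\xi(T+U)=\xi(T)+\xi(U)$).

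The genuine gap is the rank~$2$ case, which you present as essentially done modulo ``clearing denominators.'' Theorem~\ref{thm:Nagata Main} covers only the Nagata type automorphisms, i.e.\ those built from the very special invariant $f=ax_1+\theta(x_2)$ \emph{and} with $F$ restricted to lie in $R[f]$ rather than in all of $R[x_1,x_2]^{\ep}=R_a[f]\cap R[x_1,x_2]$. After base change to $K=Q(R)$, Miyanishi's theorem gives you a $K$-coordinate $X$ with $\ep(X)=X$, but after clearing denominators $X$ is merely some element of $R[x_1,x_2]$ that is a variable over $K$; there is no argument that, up to $\Aut_RR[x_1,x_2]$, it can be normalized to the linear-in-$x_1$ form $ax_1+\theta(x_2)$ (this is exactly the hard ``residual variable/linear plane'' territory of Russell--Sathaye, and it is where the entire machinery of Section~\ref{sect:Nagata} --- the element $d=\gcd(a,\theta'(x_2))$, the ideal $I$, the elements $q_1,\widetilde{q}_1,q_2$ --- is anchored). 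Even granting the Nagata shape, the restriction $F\in R[f]$ excludes legitimate choices of $h\in\kx^{\ep}$. Finally, as you acknowledge, the rank~$3$ case is handled only for the explicit family of Section~\ref{sect:rank3 family}, and under the additional hypotheses of Theorems~\ref{thm:rank3 invariant ring} and \ref{thm:rank3 invariant ring2} ($p\mid t$ and $h^{p-1}\in f^lk[f,g]$, resp.\ $p\nmid t$, $p\nmid mt-1$, $h^{p-1}\in f^{l+1}g^2k[f,g]$); no normal form theorem for general rank-three actions is available. So the proposal is an honest roadmap, not a proof, and the rank~$2$ step as written would not survive scrutiny without substantial new input.
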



\begin{thebibliography}{AMS}

\bibitem{AM}
M. F. Atiyah\ and\ I. G. Macdonald, 
{\it Introduction to commutative algebra}, 
Addison-Wesley Publishing Co., Reading, MA, 1969. 



\bibitem{BEW}
J. Berson, A. van den Essen\ and\ D. Wright, 
Stable tameness of two-dimensional polynomial automorphisms 
over a regular ring, Adv. Math. {\bf 230} (2012), no.~4-6, 2176--2197.


\bibitem{MIT}
H. E. A. E. Campbell\ and\ D. L. Wehlau, 
{\it Modular invariant theory}, 
Encyclopaedia of Mathematical Sciences, 139, 
Springer-Verlag, Berlin, 2011. 




\bibitem{Essen}
A. van den Essen, 
{\it Polynomial automorphisms and the Jacobian conjecture}, 
Progress in Mathematics, 190, Birkh\"auser Verlag, Basel, 
2000. 


\bibitem{Frank}G.~Freudenburg, 
Actions of ${\bf G}_a$ on ${\bf A}^3$ 
defined by homogeneous derivations, 
J.\ Pure Appl.\ Algebra {\bf 126} (1998), 169--181. 


\bibitem{Plinth}
G. Freudenburg, {\it Algebraic theory of locally nilpotent derivations}, second edition, Encyclopaedia of Mathematical Sciences, 136, 
Springer-Verlag, Berlin, 2017. 

\bibitem{HS}
C. Huneke\ and\ I. Swanson, 
{\it Integral closure of ideals, rings, and modules}, 
London Mathematical Society Lecture Note Series, 336, 
Cambridge University Press, Cambridge, 2006.


\bibitem{Jung}H.~Jung, 
\textit{\"Uber ganze birationale Transformationen der Ebene}, 
J.\ Reine Angew.\ Math.\ {\bf 184} (1942), 161--174. 



\bibitem{Kulk}W.~van der Kulk, 
On polynomial rings in two variables, 
Nieuw Arch.\ Wisk. (3) {\bf 1} (1953), 33--41. 



\bibitem{tame3}
S. Kuroda, 
Shestakov-Umirbaev reductions 
and Nagata's conjecture on a polynomial 
automorphism, Tohoku Math.\ J.\ {\bf 62} (2010), 75--115.



\bibitem{wild3}
S. Kuroda, Wildness of polynomial automorphisms: applications of the Shestakov-Umirbaev theory and its generalization, in {\it Higher dimensional algebraic geometry}, 103--120, RIMS K\^{o}ky\^{u}roku Bessatsu, B24, 
Res. Inst. Math. Sci. (RIMS), Kyoto.


\bibitem{Sugaku}
S. Kuroda, 
Recent developments in polynomial automorphisms: the solution of Nagata's conjecture and afterwards, 
Sugaku Expositions {\bf 29} (2016), 
177--201.


\bibitem{Maubach}
S. Maubach, 
Invariants and conjugacy classes of triangular polynomial maps, 
J. Pure Appl. Algebra {\bf 219} (2015), no.~12, 5206--5224.


\bibitem{MI}
M. Miyanishi\ and\ H. Ito, 
{\it Algebraic surfaces in positive characteristics---purely 
inseparable phenomena in curves and surfaces}, 
World Scientific Publishing Co. Pte. Ltd., 
Hackensack, NJ, 2021. 


\bibitem{M1}M.~Miyanishi, 
{\it Curves on rational and unirational surfaces}, 
Tata Institute of Fundamental Research Lectures 
on Mathematics and Physics, 60, 
Tata Inst. Fund. Res., Bombay, 1978. 


\bibitem{MiyanishiNagoya}
M.~Miyanishi, 
$G\sb{a}$-action of the affine plane, 
Nagoya Math. J. {\bf 41} (1971), 97--100. 


\bibitem{Miyanishi3}
M. Miyanishi, 
Wild $\Bbb{Z}/p\Bbb{Z}$-actions on algebraic surfaces, 
J. Algebra {\bf 477} (2017), 360--389. 



\bibitem{Nagata}M.~Nagata, 
On Automorphism Group of $k[x,y]$, 
Lectures in Mathematics, Department of Mathematics, 
Kyoto University, Vol.\ 5, 
Kinokuniya Book-Store Co.\ Ltd., Tokyo, 1972. 


\bibitem{Rentschler}
R. Rentschler, 
Op\'erations du groupe additif sur le plan affine, 
C. R. Acad. Sci. Paris S\'er. A-B {\bf 267} (1968), 
384--387. 


\bibitem{Russell}
P. Russell, Simple birational extensions of two dimensional affine rational domains, Compositio Math. {\bf 33} (1976), no.~2, 197--208. 

\bibitem{Sathaye}
A. Sathaye, 
On linear planes, Proc. Amer. Math. Soc. {\bf 56} (1976), 1--7.





\bibitem{SU}
I.~Shestakov and U.~Umirbaev, 
The tame and the wild automorphisms of polynomial rings in three variables, 
J.\ Amer.\ Math.\ Soc.\ {\bf 17} (2004), 197--227. 

\bibitem{Takeda}
Y. Takeda, Artin-Schreier coverings of algebraic surfaces, 
J. Math. Soc. Japan {\bf 41} (1989), no.~3, 415--435. 


\bibitem{Tani}
R. Tanimoto, Pseudo-derivations and modular invariant theory, 
Transform. Groups {\bf 23} (2018), no.~1, 271--297.


\end{thebibliography}
\end{document}